\numberwithin{equation}{subsection}
\newcommand{\sqsp}{\renewcommand{\baselinestretch}{1.15}\tiny\normalsize}
\newtheorem{theorem}[subsection]{Theorem}
\newtheorem{lemma}[subsection]{Lemma}
\newtheorem{proposition}[subsection]{Proposition}
\newtheorem{corollary}[subsection]{Corollary}
\theoremstyle{definition}
\newtheorem{definition}[subsection]{Definition}
\newtheorem{example}[subsection]{Example}
\newtheorem{remark}[subsection]{Remark}
\newcommand{\bk}{\mathbf{k}}
\newcommand{\oct}{\mathbf{O}}
\newcommand{\octalpha}{\mathbf{O}_\alpha}
\newcommand{\octalphaminus}{\mathbf{O}_\alpha^-}
\newcommand{\octminus}{\mathbf{O}^-}
\newcommand{\xbar}{\overline{x}}
\newcommand{\ybar}{\overline{y}}
\newcommand{\zbar}{\overline{z}}
\newcommand{\mun}{\mu^{(n)}}
\newcommand{\muone}{\mu^{(1)}}
\newcommand{\mualpha}{\mu_\alpha}
\newcommand{\cyclicsum}{{\circlearrowright\,}}
\DeclareMathOperator{\Hom}{Hom}
\newcommand{\matrixx}{{\begin{pmatrix}
1 & 0 & e_1\\
0 & 1 & 0\\
-e_1 & 0 & 1\end{pmatrix}}}
\newcommand{\matrixy}{{\begin{pmatrix}
1 & e_2 & e_3\\
-e_2 & 0 & 0\\
-e_3 & 0 & 0\end{pmatrix}}}
\newcommand{\matrixz}{{\begin{pmatrix}
2 & \frac{1}{2}e_3 + \frac{3}{2}e_6 & e_1 + e_2 + 2e_7\\
-\frac{1}{2}e_3 - \frac{3}{2}e_6 & 0 & -\frac{3}{4}e_5 + e_7\\
-e_1 - e_2 - 2e_7 & \frac{3}{4}e_5 - e_7 & 0\end{pmatrix}}}
\newcommand{\matrixw}{{\begin{pmatrix}
3 & 2e_6 & 2e_5 + 2e_7\\
-2e_6 & 0 & \frac{7}{4}e_1\\
-2e_5 - 2e_7 & -\frac{7}{4}e_1 & 1\end{pmatrix}}}
\begin{document}

\title{Hom-Maltsev, Hom-alternative, and Hom-Jordan algebras}
\author{Donald Yau}

\begin{abstract}
Hom-Maltsev(-admissible) algebras are defined, and it is shown that Hom-alternative algebras are Hom-Maltsev-admissible.  With a new definition of a Hom-Jordan algebra, it is shown that Hom-alternative algebras are Hom-Jordan-admissible.  Hom-type generalizations of some well-known identities in alternative algebras, including the Moufang identities, are obtained.
\end{abstract}

\keywords{Hom-Maltsev algebras, Hom-Maltsev-admissible algebras, Hom-alternative algebras, Hom-Moufang identities, Hom-Jordan algebras, Hom-Jordan-admissible algebras.}

\subjclass[2000]{17A20, 17C50, 17D05, 17D10, 17D25}

\address{Department of Mathematics\\
    The Ohio State University at Newark\\
    1179 University Drive\\
    Newark, OH 43055, USA}
\email{dyau@math.ohio-state.edu}

\date{\today}
\maketitle

\sqsp

\section{Introduction}

Maltsev algebras were introduced by Maltsev \cite{maltsev}, who called these objects Moufang-Lie algebras.  A Maltsev algebra is a non-associative algebra $A$ with an anti-symmetric multiplication $[-,-]$ that satisfies the Maltsev identity
\begin{equation}
\label{maltsev}
J(x,y,[x,z]) = [J(x,y,z),x]
\end{equation}
for all $x,y,z \in A$, where $J(x,y,z) = [[x,y],z] + [[z,x],y] + [[y,z],x]$ is the Jacobian.  In particular, Lie algebras are examples of Maltsev algebras.   Maltsev algebras play an important role in the geometry of smooth loops.   Just as the tangent algebra of a Lie group is a Lie algebra, the tangent algebra of a locally analytic Moufang loop is a Maltsev algebra \cite{kerdman,kuzmin,maltsev,nagy,sabinin}.  The reader is referred to \cite{gt,myung,okubo} for discussions about the relationships between Maltsev algebras, exceptional Lie algebras, and physics.

Closely related to Maltsev algebras are alternative algebras.  An alternative algebra is an algebra whose associator is an alternating function.  In particular, all associative algebras are alternative, but there are plenty of non-associative alternative algebras, such as the octonions.  Roughly speaking, alternative algebras are related to Maltsev algebras as associative algebras are related to Lie algebras.  Indeed, as Maltsev observed in \cite{maltsev}, every alternative algebra $A$ is Maltsev-admissible, i.e., the commutator algebra $A^-$ is a Maltsev algebra.  There are many Maltsev-admissible algebras that are not alternative; see, e.g., \cite{myung}.  The reader is referred to \cite{tw} for applications of alternative algebras to projective geometry, buildings, and algebraic groups.

Instead of the commutator, the anti-commutator also gives rise to interesting structures.  A Jordan algebra is a commutative algebra that satisfies the Jordan identity
\begin{equation}
\label{jordan}
(x^2y)x = x^2(yx).
\end{equation}
Starting with an alternative algebra $A$, it is known that the Jordan product
\[
x \ast y = \frac{1}{2}(xy + yx)
\]
gives a Jordan algebra $A^+ = (A,\ast)$.  In other words, alternative algebras are Jordan-admissible.  The reader is referred to \cite{gt,jvw,okubo,sv} for discussions about the important roles of Jordan algebras in physics, especially quantum mechanics.

The purpose of this paper is to study Hom-type generalizations of Maltsev(-admissible) algebras, alternative algebras, and Jordan(-admissible) algebras.  The reader is referred to the survey article \cite{mak2} for discussions about other Hom-type algebras and to \cite{yau3}-\cite{yau11} for Hom-type analogues of Novikov algebras, quantum groups, and the Yang-Baxter equations.  Roughly speaking, a Hom-type generalization of a kind of algebras is defined by twisting the defining identities by a self-map, called the twisting map.  When the twisting map is the identity map, one recovers the original kind of algebras.

Below is a description of the rest of this paper.

In section \ref{sec:hommaltsev} we define Hom-Maltsev algebras and prove two construction results, Theorems \ref{thm:maltsevtp2} and \ref{thm:maltsevtp1}.  Hom-Maltsev algebras include Maltsev algebras and Hom-Lie algebras as examples.  Theorem \ref{thm:maltsevtp2} says that the class of Hom-Maltsev algebras is closed under the process of taking derived Hom-algebras (Definition \ref{def:derivedhomalge}), in which the structure maps are suitably twisted by the twisting map.  Theorem \ref{thm:maltsevtp1} says that a Maltsev algebra $(A,[-,-])$ can be twisted into a Hom-Maltsev algebra $A_\alpha = (A,[-,-]_\alpha=\alpha\circ[-,-],\alpha)$ along any algebra self-map $\alpha$ of $A$.  In Examples \ref{ex:4dmaltsev} and \ref{ex:5dmatlsev}, we show that, using Theorem \ref{thm:maltsevtp1} with different algebra self-maps, it is possible to twist a Maltsev algebra into a non-Hom-Lie Hom-Maltsev algebra, a Hom-Lie algebra, or a Lie algebra.

The Hom-type analogue of an alternative algebra is called a Hom-alternative algebra, in which the Hom-associator \eqref{homassociator} is alternating.  Hom-alternative algebras were introduced by Makhlouf in \cite{mak}.  In section \ref{sec:homalternative} we show that Hom-alternative algebras are Hom-Maltsev-admissible (Theorem \ref{thm:homaltmaltsev}).  That is, the commutator Hom-algebra (Definition \ref{def:commutatoralgebra}) of a Hom-alternative algebra is a Hom-Maltsev algebra, generalizing the fact that alternative algebras are Maltsev-admissible.  Hom-Lie-admissible algebras \cite{ms} and Maltsev-admissible algebras are obvious examples of Hom-Maltsev-admissible algebras.  The proof of the Hom-Maltsev-admissibility of Hom-alternative algebras involves the Hom-type analogues of certain identities that hold in alternative algebras and of the Bruck-Kleinfeld function (Definition \ref{def:bk}).  In Example \ref{ex:oct}, starting with the octonions, we construct (non-Hom-Lie) Hom-Maltsev algebras using Theorem \ref{thm:homaltmaltsev}.

In section \ref{sec:hommaltsevad} we consider the class of Hom-Maltsev-admissible algebras.  In Proposition \ref{prop:hmad} we give several characterizations of Hom-Maltsev-admissible algebras that are also Hom-flexible \cite{ms}.  In Theorems \ref{thm:homf} and \ref{thm:hommalad} we prove construction results for Hom-flexible and Hom-Maltsev-admissible algebras.  Hom-alternative algebras are Hom-flexible \cite{mak}, so by Theorem \ref{thm:homaltmaltsev} Hom-alternative algebras are both Hom-flexible and Hom-Maltsev-admissible.  In Examples \ref{ex:hma5}, \ref{ex:hma6}, and \ref{ex:hma8}, we construct Hom-flexible, Hom-Maltsev-admissible algebras that are not Hom-alternative, not Hom-Lie-admissible, and not Maltsev-admissible.

In section \ref{sec:jordan} we study Hom-Jordan(-admissible) algebras, which are the Hom-type generalizations of Jordan(-admissible) algebras.  The first definition of a Hom-Jordan algebra was given by Makhlouf in \cite{mak}.  Hom-alternative algebras are not Hom-Jordan-admissible under that definition.  We introduce a different definition of a Hom-Jordan algebra and show that Hom-alternative algebras are Hom-Jordan-admissible under this new definition (Theorem \ref{thm:hahj}).  In other words, the plus Hom-algebra (Definition \ref{def:plushom}) of any Hom-alternative algebra is a Hom-Jordan algebra, generalizing the Jordan-admissibility of alternative algebras.  Construction results analogous to Theorems \ref{thm:maltsevtp2} and \ref{thm:maltsevtp1} are proved for Hom-Jordan(-admissible) algebras (Theorems \ref{thm:hjtp} and \ref{thm:hjatp}).  In Example \ref{ex:m83}, we construct (non-Jordan) Hom-Jordan algebras using the $27$-dimensional exceptional simple Jordan algebra of $3 \times 3$ Hermitian octonionic matrices.

In section \ref{sec:moufang} we provide further properties for Hom-alternative algebras.  First we observe that a Hom-algebra is Hom-associative if and only if it is both Hom-alternative and Hom-Lie-admissible (Proposition \ref{cor:homaltass}).  Then we show that the class of Hom-alternative algebras is closed under taking derived Hom-algebras (Proposition \ref{prop:alttp2}).  In Propositions \ref{prop:f2} and \ref{prop:g} we provide further properties of the Hom-Bruck-Kleinfeld function in Hom-alternative algebras.  It is well-known that the Moufang identities \eqref{moufang} hold in alternative algebras.  In Theorem \ref{thm:moufang} we show that there are Hom-type generalizations of the Moufang identities in Hom-alternative algebras.

\section{Hom-Maltsev algebras}
\label{sec:hommaltsev}

In this section we define Hom-Maltsev algebras and study their general properties.  Other characterizations of the Hom-Maltsev identity are given (Proposition \ref{prop:hommaltsevid}).  We prove some construction results for Hom-Maltsev algebras (Theorems \ref{thm:maltsevtp2} and \ref{thm:maltsevtp1}).  Using Theorem \ref{thm:maltsevtp1}, we demonstrate in Examples \ref{ex:4dmaltsev} and \ref{ex:5dmatlsev} that it is possible to twist a Maltsev algebra into a non-Hom-Lie Hom-Maltsev algebra or a Hom-Lie (or even Lie) algebra using different algebra morphisms.

\subsection{Conventions}
Throughout the rest of this paper, we work over a fixed field $\bk$ of characteristic $0$.  Modules, tensor products, linearity, and $\Hom$ are all meant over $\bk$. If $f \colon V \to V$ is a linear self-map on a vector space $V$, then $f^n \colon V \to V$ denotes the composition $f \circ \cdots \circ f$ of $n$ copies of $f$, with $f^0 = Id$.  For a map $\mu \colon V^{\otimes 2} \to V$, we sometimes write $\mu(a,b)$ as $ab$ for $a,b \in V$.  If $W$ is another vector space, then $\tau \colon V \otimes W \cong W \otimes V$ denotes the twist isomorphism, $\tau(v \otimes w) = w \otimes v$.  More generally, we do not distinguish between a permutation $\theta$ on $n$ letters and its induced linear isomorphism $\theta \colon V^{\otimes n} \to V^{\otimes n}$ given by $\theta(v_1 \otimes \cdots \otimes v_n) = v_{\theta(1)} \otimes \cdots\otimes v_{\theta(n)}$.


Let us give the definitions regarding Hom-algebras.

\begin{definition}
\label{def:homalgebra}
By a \textbf{Hom-algebra} we mean a triple $(A,\mu,\alpha)$ in which $A$ is a $\bk$-module, $\mu \colon A^{\otimes 2} \to A$ is a bilinear map (the multiplication), and $\alpha \colon A \to A$ is a linear map (the twisting map) such that $\alpha\circ\mu = \mu\circ\alpha^{\otimes 2}$ (multiplicativity).  A Hom-algebra $(A,\mu,\alpha)$ is usually denoted simply by $A$.  A \textbf{morphism} $f \colon A \to B$ of Hom-algebras is a linear map $f$ of the underlying $\bk$-modules such that $f\circ\alpha_A = \alpha_B \circ f$ and $\mu_B\circ f^{\otimes 2} = f \circ \mu_A$.
\end{definition}

\begin{remark}
\label{rk:classicalalgebra}
If $(A,\mu)$ is a not-necessarily associative algebra in the usual sense, we also regard it as the Hom-algebra $(A,\mu,Id)$ with identity twisting map.  This defines a fully faithful embedding from the category of algebras into the category of Hom-algebras.  With this convention, the notion of a morphism between Hom-algebras with identity twisting maps reduces to the usual notion of an algebra morphism.
\end{remark}

\begin{remark}
The multiplicativity of the twisting map $\alpha$ is built into our definition of a Hom-algebra.  Some authors (see, e.g., \cite{mak,mak2,ms}) do not make this assumption.  We chose to impose multiplicativity because many of our results depend on it and all of our concrete examples of Hom-Maltsev(-admissible), Hom-alternative, and Hom-Jordan(-admissible) algebras have this property.
\end{remark}

The algebraic structures studied in this paper are all defined using the Hom-versions of the associator and the Jacobian, which we now define.

\begin{definition}
\label{def:homassociator}
Let $(A,\mu,\alpha)$ be a Hom-algebra.
\begin{enumerate}
\item
The \textbf{Hom-associator} of $A$ \cite{ms} is the trilinear map $as_A \colon A^{\otimes 3} \to A$ defined as
\begin{equation}
\label{homassociator}
as_A = \mu \circ (\mu \otimes \alpha - \alpha \otimes \mu).
\end{equation}
\item
The \textbf{Hom-Jacobian} of $A$ \cite{ms} is the trilinear map $J_A \colon A^{\otimes 3} \to A$ defined as
\begin{equation}
\label{homjacobian}
J_A = \mu \circ (\mu \otimes \alpha) \circ (Id + \sigma + \sigma^2),
\end{equation}
where $\sigma \colon A^{\otimes 3} \to A^{\otimes 3}$ is the cyclic permutation $\sigma(x \otimes y \otimes z) = z \otimes x \otimes y$.
\end{enumerate}
If there is only one Hom-algebra under consideration, we will sometimes omit the subscript in the Hom-associator and the Hom-Jacobian
\end{definition}

Note that when $(A,\mu)$ is an algebra (with $\alpha = Id$), its Hom-associator and Hom-Jacobian coincide with its usual associator and Jacobian, respectively.

Since Hom-Maltsev algebras generalize Hom-Lie algebras (as we will see shortly), which in turn generalize Lie algebras, we use the bracket notation $[-,-]$ to denote their multiplications.

\begin{definition}
\label{def:hommaltsev}
\begin{enumerate}
\item
A \textbf{Hom-Lie algebra} \cite{hls,ms} is a Hom-algebra $(A,[-,-],\alpha)$ such that $[-,-]$ is anti-symmetric (i.e., $[-,-]\circ(Id + \tau) = 0$) and that the \textbf{Hom-Jacobi identity}
\begin{equation}
\label{homjacobiid}
J_A = 0
\end{equation}
is satisfied, where $J_A$ is the Hom-Jacobian of $A$ \eqref{homjacobian}.
\item
A \textbf{Hom-Maltsev algebra} is a Hom-algebra $(A,[-,-],\alpha)$ such that $[-,-]$ is anti-symmetric and that the \textbf{Hom-Maltsev identity}
\begin{equation}
\label{hommaltsevid}
J_A(\alpha(x),\alpha(y),[x,z]) = [J_A(x,y,z),\alpha^2(x)]
\end{equation}
is satisfied for all $x,y,z \in A$.
\end{enumerate}
\end{definition}

Observe that when $\alpha = Id$, the Hom-Jacobi identity reduces to the usual Jacobi identity
\[
[[x,y],z] + [[z,x],y] + [[y,z],x] = 0
\]
for all $x,y,z \in A$.  Likewise, when $\alpha = Id$, by the anti-symmetry of $[-,-]$, the Hom-Maltsev identity reduces to the Maltsev identity \eqref{maltsev} or equivalently,
\begin{equation}
\label{maltsevidentity}
[[x,y],[x,z]] = [[[x,y],z],x] + [[[y,z],x],x] + [[[z,x],x],y]
\end{equation}
for all $x,y,z \in A$.

\begin{example}
\label{ex:homlie}
A Lie (resp., Maltsev \cite{maltsev}) algebra $(A,[-,-])$ is a Hom-Lie (resp., Hom-Maltsev) algebra with $\alpha = Id$, since the Hom-Jacobi identity \eqref{homjacobiid} (resp., the Hom-Maltsev identity \eqref{hommaltsevid}) reduces to the usual Jacobi (resp., Maltsev) identity when $\alpha = Id$.  Moreover, every Hom-Lie algebra is also a Hom-Maltsev algebra because the Hom-Jacobi identity $J_A = 0$ clearly implies the Hom-Maltsev identity.\qed
\end{example}

Before we give more examples of Hom-Maltsev algebras, let us give some other characterizations of the Hom-Maltsev identity.

\begin{proposition}
\label{prop:hommaltsevid}
Let $(A,[-,-],\alpha)$ be a Hom-algebra with $[-,-]$ anti-symmetric.  Then the following statements are equivalent.
\begin{enumerate}
\item
$A$ is a Hom-Maltsev algebra, i.e., the Hom-Maltsev identity \eqref{hommaltsevid} holds.
\item
The condition
\begin{equation}
\label{hommaltsevid2}
\begin{split}
J(\alpha(w),\alpha(y),[x,z]) &+ J(\alpha(x),\alpha(y),[w,z])\\
&= [J(w,y,z),\alpha^2(x)] + [J(x,y,z),\alpha^2(w)]
\end{split}
\end{equation}
holds for all $w,x,y,z \in A$.
\item
The condition
\begin{equation}
\label{hommaltsevid3}
\begin{split}
\alpha([[x,y],[x,z]]) &= [[[x,y],\alpha(z)],\alpha^2(x)] + [[[y,z],\alpha(x)],\alpha^2(x)]\\
&\relphantom{} + [[[z,x],\alpha(x)],\alpha^2(y)]
\end{split}
\end{equation}
holds for all $x,y,z \in A$.
\item
The condition
\begin{equation}
\label{hommaltsevid4}
\begin{split}
\alpha([[w,y],[x,z]]) & + \alpha([[x,y],[w,z]])\\
&= [[[w,y],\alpha(z)],\alpha^2(x)] + [[[x,y],\alpha(z)],\alpha^2(w)]\\
&\relphantom{} + [[[y,z],\alpha(w)],\alpha^2(x)] + [[[y,z],\alpha(x)],\alpha^2(w)]\\
&\relphantom{} + [[[z,w],\alpha(x)],\alpha^2(y)] + [[[z,x],\alpha(w)],\alpha^2(y)]
\end{split}
\end{equation}
holds for all $w,x,y,z \in A$.
\end{enumerate}
\end{proposition}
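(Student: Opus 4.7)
My plan is to prove the implications $(1) \Leftrightarrow (3)$ by direct expansion of the Hom-Jacobian followed by cancellation using anti-symmetry and multiplicativity, and then to prove $(1) \Leftrightarrow (2)$ and $(3) \Leftrightarrow (4)$ uniformly by the standard linearization-in-$x$ trick (which is valid since $\bk$ has characteristic $0$).

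For $(1) \Leftrightarrow (3)$, I will first expand $J_A(\alpha(x),\alpha(y),[x,z])$ as a sum of three double brackets using \eqref{homjacobian}. The cyclic term $[[\alpha(x),\alpha(y)],\alpha([x,z])]$ equals $\alpha([[x,y],[x,z]])$ by multiplicativity (applied twice), producing the left-hand side of \eqref{hommaltsevid3}. On the right-hand side $[J_A(x,y,z),\alpha^2(x)]$, expanding $J_A$ gives three terms, one of which is already $[[[y,z],\alpha(x)],\alpha^2(x)]$ matching \eqref{hommaltsevid3}. I will then collect the remaining terms from both sides and show, using the anti-symmetry identities $[[x,z],\alpha(y)]=-[[z,x],\alpha(y)]$ and $[\alpha(y),[x,z]]=-[[x,z],\alpha(y)]$, that two of the leftover terms on each side cancel against each other, while the surviving pair of terms assembles into $[[[z,x],\alpha(x)],\alpha^2(y)]$. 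This bookkeeping is the main obstacle: there are six signed terms to match in a non-obvious pairing, and one has to be careful not to confuse $\alpha^2(x)$ with $\alpha^2(y)$ in the slotting.

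For $(1) \Leftrightarrow (2)$, observe that both sides of \eqref{hommaltsevid} are, viewed as functions of $x$ (with $y$, $z$ held fixed), sums of terms each of which is bilinear in $x$. Substituting $x \mapsto x+w$ and using trilinearity of $J_A$ and bilinearity of $[-,-]$ expands each side into four pieces: the two original copies (with $x$ and with $w$) plus two mixed pieces. Subtracting the two original copies of \eqref{hommaltsevid} leaves exactly \eqref{hommaltsevid2}. Conversely, specializing \eqref{hommaltsevid2} at $w=x$ produces $2J_A(\alpha(x),\alpha(y),[x,z])=2[J_A(x,y,z),\alpha^2(x)]$, and division by $2$ is legitimate in characteristic $0$. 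The implication $(3) \Leftrightarrow (4)$ follows from the identical linearization, applied to \eqref{hommaltsevid3} instead.

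Altogether, the proof is two quick applications of the polarization trick sandwiching one careful, term-by-term algebraic identification. I expect no obstacle in the linearization steps; the only place that requires genuine care is tracking the signs and twisting powers in the equivalence $(1)\Leftrightarrow(3)$, which I would lay out as an explicit chain of equalities rather than as a block of prose.
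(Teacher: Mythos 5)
Your proposal is correct and follows essentially the same route as the paper: linearization (substitute $x \mapsto x+w$, then specialize $w=x$ and divide by $2$) for $(1)\Leftrightarrow(2)$ and $(3)\Leftrightarrow(4)$, and for $(1)\Leftrightarrow(3)$ a direct expansion of both sides of \eqref{hommaltsevid} in which multiplicativity turns the cyclic term into $\alpha([[x,y],[x,z]])$, the common summand $[[[z,x],\alpha(y)],\alpha^2(x)]$ cancels, and the term $[[[x,z],\alpha(x)],\alpha^2(y)]$ moves across with a sign flip to become $[[[z,x],\alpha(x)],\alpha^2(y)]$. No gaps; the bookkeeping you flag is exactly the computation the paper carries out.
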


\begin{proof}
The equivalence between the Hom-Maltsev identity \eqref{hommaltsevid} and \eqref{hommaltsevid2} follows from linearization: To get the latter, replace $x$ with $w + x$ in the former.  Conversely, \eqref{hommaltsevid2} yields \eqref{hommaltsevid} by setting $w = x$.  Similarly, linearization implies the equivalence between \eqref{hommaltsevid3} and \eqref{hommaltsevid4}.

To prove the equivalence between the Hom-Maltsev identity and \eqref{hommaltsevid3}, observe that the left-hand side of the Hom-Maltsev identity \eqref{hommaltsevid} is:
\[
\begin{split}
J(\alpha(x),\alpha(y),[x,z])
&= [[\alpha(x),\alpha(y)],\alpha([x,z])] + [[[x,z],\alpha(x)],\alpha^2(y)] + [[\alpha(y),[x,z]],\alpha^2(x)]\\
&= \alpha([[x,y],[x,z]]) - [[[z,x],\alpha(x)],\alpha^2(y)] + [[[z,x],\alpha(y)],\alpha^2(x)].
\end{split}
\]
In the last equality above, we used the multiplicativity of $\alpha$ and the anti-symmetry of $[-,-]$.  Likewise, the right-hand side of the Hom-Maltsev identity \eqref{hommaltsevid} is:
\[
[J(x,y,z),\alpha^2(x)]
= [[[x,y],\alpha(z)],\alpha^2(x)] + [[[z,x],\alpha(y)],\alpha^2(x)] + [[[y,z],\alpha(x)],\alpha^2(x)].
\]
Since the summand $[[[z,x],\alpha(y)],\alpha^2(x)]$ appears on both sides of \eqref{hommaltsevid}, the above calculation and a rearrangement of terms imply the equivalence between the Hom-Maltsev identity and \eqref{hommaltsevid3}.
\end{proof}

To state our next result, we need the following definition.

\begin{definition}
\label{def:derivedhomalge}
Let $(A,\mu,\alpha)$ be a Hom-algebra and $n \geq 0$.  Define the \textbf{$n$th derived Hom-algebra} of $A$ by
\[
A^n = (A,\mun = \alpha^{2^n-1}\circ\mu,\alpha^{2^n}).
\]
Note that $A^0 = A$, $A^1 = (A,\muone = \alpha\circ\mu, \alpha^2)$, and $A^{n+1} = (A^n)^1$.
\end{definition}

The following elementary observations are used in the next result.

\begin{lemma}
\label{lem:jalpha}
Let $(A,\mu,\alpha)$ be a Hom-algebra.  Then we have
\begin{equation}
\label{jalpha}
J_A \circ \alpha^{\otimes 3} = \alpha \circ J_A
\end{equation}
and
\begin{equation}
\label{jan}
J_{A^n} = \alpha^{2(2^n-1)} \circ J_A
\end{equation}
for all $n \geq 0$
\end{lemma}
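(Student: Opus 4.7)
The plan is to verify both identities by unpacking the definition $J_A = \mu \circ (\mu \otimes \alpha) \circ (Id + \sigma + \sigma^2)$ and using multiplicativity $\alpha \circ \mu = \mu \circ \alpha^{\otimes 2}$ two times. Both statements are essentially bookkeeping of how many powers of $\alpha$ one collects, so the work is calculation rather than ideas.

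For \eqref{jalpha}, I would first observe that the cyclic permutation $\sigma$ commutes with $\alpha^{\otimes 3}$, since $\sigma$ only reorders tensor factors while $\alpha^{\otimes 3}$ applies the same map in every slot; hence $(Id + \sigma + \sigma^2) \circ \alpha^{\otimes 3} = \alpha^{\otimes 3} \circ (Id + \sigma + \sigma^2)$. Next, I push $\alpha^{\otimes 3}$ past $\mu \otimes \alpha$ using multiplicativity in the first tensor slot, giving $(\mu \otimes \alpha) \circ \alpha^{\otimes 3} = (\alpha \circ \mu) \otimes \alpha^2 = \alpha^{\otimes 2} \circ (\mu \otimes \alpha)$. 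A final application of multiplicativity, $\mu \circ \alpha^{\otimes 2} = \alpha \circ \mu$, absorbs the remaining $\alpha^{\otimes 2}$ as a single $\alpha$ in front, yielding $\alpha \circ J_A$.

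For \eqref{jan}, I would compute $J_{A^n}$ directly on elements $x,y,z$. By definition the multiplication in $A^n$ sends $(a,b)$ to $\alpha^{2^n-1}(ab)$ and the twisting map is $\alpha^{2^n}$, so
\[
\mu^{(n)}\bigl(\mu^{(n)}(x,y),\alpha^{2^n}(z)\bigr) = \alpha^{2^n-1}\bigl(\alpha^{2^n-1}(xy)\cdot\alpha^{2^n}(z)\bigr).
\]
Multiplicativity of $\alpha$ (applied $2^n - 1$ times) pulls the inner $\alpha^{2^n-1}$ out of the product, turning $\alpha^{2^n}(z) = \alpha^{2^n-1}(\alpha(z))$ into the second factor of a product inside $\alpha^{2^n-1}$, and the result is $\alpha^{2(2^n-1)}\bigl((xy)\cdot\alpha(z)\bigr)$. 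Summing the three cyclic permutations of $(x,y,z)$ gives $\alpha^{2(2^n-1)} J_A(x,y,z)$, which is the claim.

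Neither step is a real obstacle; the only mild care needed is in the exponent arithmetic for \eqref{jan}, where the factor $2(2^n - 1)$ arises from one copy of $\alpha^{2^n-1}$ from the outer $\mu^{(n)}$ and one from the inner $\mu^{(n)}$ pulled through multiplicativity. Optionally, \eqref{jan} can be proved by induction on $n$ using \eqref{jalpha} and the relation $A^{n+1} = (A^n)^1$, but the direct computation above is short enough that induction seems unnecessary.
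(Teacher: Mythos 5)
Your proposal is correct and follows essentially the same route as the paper: commuting $\alpha^{\otimes 3}$ past the cyclic sum and applying multiplicativity twice for \eqref{jalpha}, and collecting one factor of $\alpha^{2^n-1}$ from the outer $\mu^{(n)}$ and one pulled through by multiplicativity for \eqref{jan}. The only cosmetic difference is that the paper carries out the \eqref{jan} computation at the level of operator compositions rather than on elements.
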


\begin{proof}
The condition \eqref{jalpha} holds because $\alpha^{\otimes 3}$ commutes with the cyclic permutation $\sigma$ and $\alpha$ is multiplicative.  For \eqref{jan}, observe that:
\begin{equation}
\label{munalpha2n}
\begin{split}
\mun \circ (\mun \otimes \alpha^{2^n})
&= \alpha^{2^n-1} \circ \mu \circ ((\alpha^{2^n-1} \circ \mu) \otimes \alpha^{2^n})\\
&= \alpha^{2^n-1} \circ \mu \circ (\alpha^{2^n-1})^{\otimes 2} \circ (\mu \otimes \alpha)\\
&= \alpha^{2(2^n-1)} \circ \mu \circ (\mu \otimes \alpha),
\end{split}
\end{equation}
where the last equality follows from the multiplicativity of $\alpha$ with respect to $\mu$.  Pre-composing \eqref{munalpha2n} with the cyclic sum $(Id + \sigma + \sigma^2)$, we obtain \eqref{jan}.
\end{proof}

The following result shows that the category of Hom-Maltsev algebras is closed under taking derived Hom-algebras.

\begin{theorem}
\label{thm:maltsevtp2}
Let $(A,[-,-],\alpha)$ be a Hom-Maltsev algebra.  Then the $n$th derived Hom-algebra
\[
A^n = (A,[-,-]^{(n)} = \alpha^{2^n-1} \circ [-,-],\alpha^{2^n})
\]
is also a Hom-Maltsev algebra for each $n \geq 0$.
\end{theorem}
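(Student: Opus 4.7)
The plan is to reduce the Hom-Maltsev identity for $A^n$ to that of $A$ by carefully tracking powers of $\alpha$, with Lemma \ref{lem:jalpha} serving as the bridge between the two Hom-Jacobians. To streamline the bookkeeping I would set $N = 2^n$, so that the bracket of $A^n$ is $[-,-]^{(n)} = \alpha^{N-1} \circ [-,-]$ and its twisting map is $\alpha^N$. The case $n=0$ is the hypothesis $A^0 = A$, so I would assume $n \geq 1$.

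First I would dispose of the routine parts. Anti-symmetry of $[-,-]^{(n)}$ is immediate from the anti-symmetry of $[-,-]$ and linearity of $\alpha^{N-1}$. Multiplicativity of $\alpha^N$ with respect to $[-,-]^{(n)}$ follows by iterating the multiplicativity of $\alpha$ with respect to $[-,-]$: applying $\alpha$ to $\alpha^{N-1}[x,y]$ gives $\alpha^N[x,y] = \alpha^{N-1}[\alpha(x),\alpha(y)]$, and so on.

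The main step is the Hom-Maltsev identity
\[
J_{A^n}\bigl(\alpha^N(x), \alpha^N(y), [x,z]^{(n)}\bigr) = \bigl[J_{A^n}(x,y,z), \alpha^{2N}(x)\bigr]^{(n)}.
\]
I would show both sides equal $\alpha^{3(N-1)}\bigl[J_A(x,y,z), \alpha^2(x)\bigr]$, using the formula $J_{A^n} = \alpha^{2(N-1)} \circ J_A$ from \eqref{jan}. For the left-hand side, I rewrite the three arguments as $\alpha^{N-1}(\alpha(x))$, $\alpha^{N-1}(\alpha(y))$, and $\alpha^{N-1}[x,z]$, then pull the common $\alpha^{N-1}$ outside via iterated use of \eqref{jalpha}. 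This produces $\alpha^{2(N-1)} \cdot \alpha^{N-1} \cdot J_A(\alpha(x),\alpha(y),[x,z])$, and the Hom-Maltsev identity for $A$ transforms the inner expression into $[J_A(x,y,z), \alpha^2(x)]$. For the right-hand side, I substitute $J_{A^n}(x,y,z) = \alpha^{2(N-1)} J_A(x,y,z)$, rewrite $\alpha^{2N}(x) = \alpha^{2(N-1)}(\alpha^2(x))$, and pull the common $\alpha^{2(N-1)}$ out of the outer bracket by multiplicativity; combining with the prefactor $\alpha^{N-1}$ from the definition of $[-,-]^{(n)}$ yields the same expression.

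I do not anticipate any real obstacle: once the exponents are tracked, the argument is purely mechanical, and Lemma \ref{lem:jalpha} supplies exactly the two compatibilities needed (moving $\alpha$'s through $J_A$ and through the bracket). The only care required is to keep the outer powers of $\alpha$ on the left and right synchronized, which they are since $(2(N-1)) + (N-1) = 3(N-1) = (N-1) + 2(N-1)$.
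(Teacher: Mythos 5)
Your proof is correct and follows essentially the same route as the paper: both hinge on Lemma \ref{lem:jalpha} to express $J_{A^n}$ as $\alpha^{2(2^n-1)}\circ J_A$ and then push powers of $\alpha$ through the bracket via multiplicativity before invoking the Hom-Maltsev identity of $A$. The only difference is that the paper first reduces to the case $n=1$ by induction (using $A^{n+1}=(A^n)^1$), whereas you track the exponent $N=2^n$ directly for general $n$; your exponent bookkeeping checks out, with both sides equal to $\alpha^{3(2^n-1)}\bigl[J_A(x,y,z),\alpha^2(x)\bigr]$.
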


\begin{proof}
Since $A^0 = A$, $A^1 = (A,[-,-]^{(1)} = \alpha \circ [-,-],\alpha^2)$, and $A^{n+1} = (A^n)^1$, by an induction argument it suffices to prove the case $n = 1$.

To show that $A^1$ is a Hom-Maltsev algebra, first note that $[-,-]^{(1)}$ is anti-symmetric because $[-,-]$ is anti-symmetric and $\alpha$ is linear.  Since $\alpha^2$ is multiplicative with respect to $[-,-]^{(1)}$, it remains to show the Hom-Maltsev identity \eqref{hommaltsevid} for $A^1$.  For $x,y,z \in A$, we compute as follows:
\[
\begin{split}
J_{A^1}(\alpha^2(x),\alpha^2(y),[x,z]^{(1)})
&= J_{A^1}(\alpha^2(x),\alpha^2(y),\alpha([x,z]))\\
&= \alpha^2\left(J_A(\alpha^2(x),\alpha^2(y),\alpha([x,z]))\right)\quad\text{(by \eqref{jan})}\\
&= \alpha^3\left(J_A(\alpha(x),\alpha(y),[x,z])\right)\quad\text{(by \eqref{jalpha})}\\
&= \alpha^3 \left([J_A(x,y,z),\alpha^2(x)]\right)\quad\text{(by \eqref{hommaltsevid} in $A$)}\\
&= [\alpha^2(J_A(x,y,z)),(\alpha^2)^2(x)]^{(1)}\quad\text{(by multiplicativity of $\alpha$)}\\
&= [J_{A^1}(x,y,z),(\alpha^2)^2(x)]^{(1)}\quad\text{(by \eqref{jan})}.\\
\end{split}
\]
This establishes the Hom-Maltsev identity in $A^1$ and finishes the proof.
\end{proof}

To state our next result, we need the following definition.

\begin{definition}
\label{def:aalpha}
Let $(A,\mu)$ be any algebra and $\alpha \colon A \to A$ be an algebra morphism.  Define the \textbf{Hom-algebra induced by $\alpha$} as
\[
A_\alpha = (A,\mualpha,\alpha),
\]
where $\mualpha = \alpha\circ\mu$.
\end{definition}

The next result shows that given a Maltsev algebra and an algebra morphism, the induced Hom-algebra is a Hom-Maltsev algebra.  The Hom-Maltsev algebras constructed using this twisting result are generally not Maltsev algebras, as we will see in the examples later.  Such a twisting result was first used by the author in \cite{yau2} (Theorem 2.3) on $G$-associative algebras (where $G$ is a subgroup of the symmetric group on three letters), which include associative, Lie, pre-Lie, and Lie-admissible algebras as examples.  That result has since been employed and extended by various authors; see \cite{ama} (Theorem 2.7), \cite{ams} (Theorems 1.7 and 2.6), \cite{fg2} (Section 2), \cite{gohr} (Proposition 1), \cite{mak} (Theorems 2.1 and 3.5), \cite{mak2}, and \cite{ms4}.

\begin{theorem}
\label{thm:maltsevtp1}
Let $(A,[-,-])$ be a Maltsev algebra and $\alpha \colon A \to A$ be an algebra morphism.  Then the Hom-algebra $A_\alpha = (A,[-,-]_\alpha = \alpha\circ[-,-],\alpha)$ induced by $\alpha$ is a Hom-Maltsev algebra.
\end{theorem}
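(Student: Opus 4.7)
The plan is to verify the three ingredients of a Hom-Maltsev algebra for $A_\alpha$ in turn: multiplicativity of $\alpha$ with respect to the new bracket, anti-symmetry of the new bracket, and the Hom-Maltsev identity \eqref{hommaltsevid}. The first two are almost immediate. Multiplicativity holds because $\alpha$ is an algebra morphism of $(A,[-,-])$: for $x,y \in A$,
\[
\alpha([x,y]_\alpha) = \alpha^2([x,y]) = \alpha([\alpha(x),\alpha(y)]) = [\alpha(x),\alpha(y)]_\alpha.
\]
Anti-symmetry of $[-,-]_\alpha = \alpha \circ [-,-]$ is inherited from that of $[-,-]$ by the linearity of $\alpha$.

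The only real content is the Hom-Maltsev identity, and the key preliminary step I would carry out is to express $J_{A_\alpha}$ in terms of the ordinary Jacobian $J_A$ (computed from $[-,-]$ with identity twisting map, as in Remark \ref{rk:classicalalgebra}). A direct expansion using $[x,y]_\alpha = \alpha[x,y]$ and the fact that $\alpha$ commutes with $[-,-]$ yields, for each cyclic term,
\[
[[x,y]_\alpha, \alpha(z)]_\alpha = \alpha^2\bigl([[x,y],z]\bigr),
\]
and summing the three cyclic terms gives the clean identity $J_{A_\alpha}(x,y,z) = \alpha^2\bigl(J_A(x,y,z)\bigr)$. This is the Hom-analogue of \eqref{jan} for the twist construction, and I would state it as a short lemma before attacking the Hom-Maltsev identity.

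With this formula in hand, both sides of the Hom-Maltsev identity for $A_\alpha$ reduce to the same expression. The left-hand side becomes
\[
J_{A_\alpha}(\alpha(x),\alpha(y),[x,z]_\alpha) = \alpha^2 J_A(\alpha(x),\alpha(y),\alpha[x,z]) = \alpha^3 J_A(x,y,[x,z]),
\]
where the last step uses that $\alpha$ commutes with $J_A$ (since $\alpha$ is an algebra map of $A$). The Maltsev identity \eqref{maltsev} in $A$ then rewrites this as $\alpha^3[J_A(x,y,z),x]$. The right-hand side is
\[
[J_{A_\alpha}(x,y,z), \alpha^2(x)]_\alpha = \alpha\bigl[\alpha^2 J_A(x,y,z),\alpha^2(x)\bigr] = \alpha^3\bigl[J_A(x,y,z),x\bigr],
\]
again using multiplicativity of $\alpha$. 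The two sides agree, completing the verification.

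There is no real obstacle here; the only thing to watch is keeping track of the powers of $\alpha$, since the bracket $[-,-]_\alpha$ absorbs one extra $\alpha$ each time it is applied and the Hom-Maltsev identity already has $\alpha^2$ on the outside. The entire argument is therefore a bookkeeping exercise anchored on the formula $J_{A_\alpha} = \alpha^2 \circ J_A$ and on the classical Maltsev identity in $A$.
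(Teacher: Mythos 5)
Your proposal is correct and follows essentially the same route as the paper: both hinge on the identities $J_{A_\alpha} = \alpha^2 \circ J_A$ and $J_A \circ \alpha^{\otimes 3} = \alpha \circ J_A$, and then reduce the Hom-Maltsev identity for $A_\alpha$ to the classical Maltsev identity in $A$ (you merely pull the powers of $\alpha$ out before invoking the Maltsev identity, whereas the paper applies it at the shifted arguments $\alpha(x),\alpha(y),\alpha(z)$ first).
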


\begin{proof}
Since $[-,-]_\alpha$ is clearly anti-symmetric, it remains to prove the Hom-Maltsev identity \eqref{hommaltsevid} for $A_\alpha$.  Here we regard $A$ as the Hom-Maltsev algebra $(A,[-,-],Id)$ with identity twisting map.  For \emph{any} algebra $(A,[-,-])$ (Maltsev or otherwise), by the multiplicativity of $\alpha$ with respect to $[-,-]$, we have
\[
[-,-]_\alpha\circ([-,-]_\alpha \otimes \alpha) = \alpha^2 \circ[-,-]\circ([-,-]\otimes Id)
\]
and
\[
[-,-]\circ([-,-]\otimes Id)\circ\alpha^{\otimes 3} = \alpha\circ[-,-]\circ([-,-] \otimes Id).
\]
Pre-composing these identities with the cyclic sum $(Id + \sigma + \sigma^2)$ \eqref{homjacobian} (which commutes with $\alpha^{\otimes 3}$), we obtain
\begin{equation}
\label{jaalphaj}
J_{A_\alpha} = \alpha^2 \circ J_A
\end{equation}
and
\begin{equation}
\label{alphajj}
J_A \circ \alpha^{\otimes 3} = \alpha \circ J_A.
\end{equation}
To prove the Hom-Maltsev identity for $A_\alpha$, we compute as follows:
\[
\begin{split}
J_{A_\alpha}(\alpha(x),\alpha(y),[x,z]_\alpha)
&= \alpha^2\left(J_A(\alpha(x),\alpha(y),[\alpha(x),\alpha(z)])\right) \quad\text{(by \eqref{jaalphaj})}\\
&= \alpha^2\left([J_A(\alpha(x),\alpha(y),\alpha(z)),\alpha(x)]\right)\quad\text{(by \eqref{hommaltsevid} in $A$)}\\
&= [\alpha(J_A(\alpha(x),\alpha(y),\alpha(z)),\alpha^2(x)]_\alpha\quad\text{(by multiplicativity of $\alpha$)}\\
&= [\alpha^2(J_A(x,y,z)),\alpha^2(x)]_\alpha\quad\text{(by \eqref{alphajj})}\\
&= [J_{A_\alpha}(x,y,z),\alpha^2(x)]_\alpha\quad\text{(by \eqref{jaalphaj})}.
\end{split}
\]
This shows that the Hom-Maltsev identity holds in $A_\alpha$.
\end{proof}

We now discuss examples of Hom-Maltsev algebras that can be constructed using Theorem \ref{thm:maltsevtp1}.

\begin{example}
\label{ex:4dmaltsev}
There is a four-dimensional non-Lie Maltsev algebra $(A,[-,-])$ \cite{sagle} (Example 3.1) with basis $\{e_1,e_2,e_3,e_4\}$ and multiplication table:
\begin{center}
\begin{tabular}{c|cccc}
$[-,-]$ & $e_1$ & $e_2$ & $e_3$ & $e_4$ \\ \hline
$e_1$ & $0$ & $-e_2$ & $-e_3$ & $e_4$ \\
$e_2$ & $e_2$ & $0$ & $2e_4$ & $0$ \\
$e_3$ & $e_3$ & $-2e_4$ & $0$ & $0$ \\
$e_4$ & $-e_4$ & $0$ & $0$ & $0$ \\
\end{tabular}
\end{center}
We want to apply Theorem \ref{thm:maltsevtp1} to this Maltsev algebra.  Using suitable algebra morphisms, we can twist the Maltsev algebra $A$ into non-Hom-Lie, non-Maltsev Hom-Maltsev algebras or (Hom-)Lie algebras.

With a bit of computation, one can check that one class of algebra morphisms $\alpha_1 \colon A \to A$ is given by
\[
\begin{split}
\alpha_1(e_1) &= e_1 + a_3e_3 + a_4e_4,\\
\alpha_1(e_2) &= b_2e_2 + b_3e_3 + a_3b_2e_4,\\
\alpha_1(e_3) &= ce_3, \\
\alpha_1(e_4) &= b_2ce_4,
\end{split}
\]
where $a_3,a_4,b_2,b_3$, and $c$ are arbitrary scalars in $\bk$.  By Theorem \ref{thm:maltsevtp1} there is a Hom-Maltsev algebra
\[
A_{\alpha_1} = (A,[-,-]_{\alpha_1} = \alpha_1\circ[-,-],\alpha_1)
\]
with multiplication table:
\begin{center}
\begin{tabular}{c|cccc}
$[-,-]_{\alpha_1}$ & $e_1$ & $e_2$ & $e_3$ & $e_4$ \\ \hline
$e_1$ & $0$ & $-\alpha_1(e_2)$ & $-ce_3$ & $b_2ce_4$ \\
$e_2$ & $\alpha_1(e_2)$ & $0$ & $2b_2ce_4$ & $0$ \\
$e_3$ & $ce_3$ & $-2b_2ce_4$ & $0$ & $0$ \\
$e_4$ & $-b_2ce_4$ & $0$ & $0$ & $0$ \\
\end{tabular}
\end{center}
Note that $A_{\alpha_1}$ is in general not a Hom-Lie algebra, i.e., $J_{A_{\alpha_1}} \not= 0$.  Indeed, we have
\[
J_A(e_1,e_2,e_3) = -6e_4.
\]
Combining this with \eqref{jaalphaj} we obtain
\[
\begin{split}
J_{A_{\alpha_1}}(e_1,e_2,e_3)
&= \alpha_1^2(J_A(e_1,e_2,e_3))\\
&= \alpha_1^2(-6e_4)\\
&= -6(b_2c)^2e_4,
\end{split}
\]
which is not equal to $0$ in general.

Also, $(A,[-,-]_{\alpha_1})$ is not a Maltsev algebra in general.  Indeed, let $J'$ denote the usual Jacobian of $(A,[-,-]_{\alpha_1})$, i.e.,
\begin{equation}
\label{j'}
J'(x,y,z) = [[x,y]_\alpha,z]_\alpha + [[z,x]_\alpha,y]_\alpha + [[y,z]_\alpha,x]_\alpha,
\end{equation}
where $\alpha = \alpha_1$.  To see that the Maltsev identity \eqref{maltsev} does not hold in $(A,[-,-]_{\alpha_1})$, it suffices to show that
\begin{equation}
\label{4dnotmaltsev}
J'(e_1,e_2,[e_1,e_3]_\alpha) \not= [J'(e_1,e_2,e_3),e_1]_\alpha.
\end{equation}
Indeed, we have
\[
\begin{split}
J'(e_1,e_2,e_3) &= [[e_1,e_2]_\alpha,e_3]_\alpha + [[e_3,e_1]_\alpha,e_2]_\alpha + [[e_2,e_3]_\alpha,e_1]_\alpha\\
&= -2b_2c(b_2 + c + b_2c)e_4.
\end{split}
\]
This implies that, on the one hand,
\[
\begin{split}
J'(e_1,e_2,[e_1,e_3]_\alpha) &= -cJ'(e_1,e_2,e_3)\\
&= 2b_2c^2(b_2 + c + b_2c)e_4.
\end{split}
\]
On the other hand, we have
\[
\begin{split}
[J'(e_1,e_2,e_3),e_1]_\alpha &= -2b_2c(b_2 + c + b_2c)[e_4,e_1]_\alpha\\
&= 2(b_2c)^2(b_2 + c + b_2c)e_4.
\end{split}
\]
This proves that \eqref{4dnotmaltsev} holds whenever $b_2 \not= 1$ and $b_2c(b_2 + c + b_2c) \not= 0$.  So $(A,[-,-]_{\alpha_1})$ is not a Maltsev algebra in general.

Using Theorem \ref{thm:maltsevtp1} it is also possible to twist the Maltsev algebra $A$ into a (Hom-)Lie algebra.  For example, consider the following class of algebra morphisms $\alpha_2 \colon A \to A$:
\[
\begin{split}
\alpha_2(e_1) &= -e_1 + a_2e_2 + a_3e_3 + a_4e_4,\\
\alpha_2(e_2) &= be_4,\\
\alpha_2(e_3) &= 0 = \alpha_2(e_4),
\end{split}
\]
where $a_2,a_3,a_4$, and $b$ are arbitrary scalars in $\bk$.  By Theorem \ref{thm:maltsevtp1} there is a Hom-Maltsev algebra
\[
A_{\alpha_2} = (A,[-,-]_{\alpha_2} = \alpha_2\circ[-,-],\alpha_2)
\]
with multiplication table:
\begin{center}
\begin{tabular}{c|cccc}
$[-,-]_{\alpha_2}$ & $e_1$ & $e_2$ & $e_3$ & $e_4$ \\ \hline
$e_1$ & $0$ & $-be_4$ & $0$ & $0$ \\
$e_2$ & $be_4$ & $0$ & $0$ & $0$ \\
$e_3$ & $0$ & $0$ & $0$ & $0$ \\
$e_4$ & $0$ & $0$ & $0$ & $0$ \\
\end{tabular}
\end{center}
From this multiplication table, it is easy to check that $J_{A_{\alpha_2}} = 0$, i.e., $A_{\alpha_2}$ is actually a Hom-Lie algebra.  Likewise, one can check that $(A,[-,-]_{\alpha_2})$ is a Lie algebra.
\qed
\end{example}

\begin{example}
\label{ex:5dmatlsev}
There is a five-dimensional non-Lie Maltsev algebra $(A,[-,-])$ \cite{sagle} (Example 3.4) with basis $\{e_1,e_2,e_3,e_4,e_5\}$ and multiplication table:
\begin{center}
\begin{tabular}{c|ccccc}
$[-,-]$ & $e_1$ & $e_2$ & $e_3$ & $e_4$ & $e_5$ \\ \hline
$e_1$ & $0$ & $0$ & $0$ & $e_2$ & $0$ \\
$e_2$ & $0$ & $0$ & $0$ & $0$ & $e_3$ \\
$e_3$ & $0$ & $0$ & $0$ & $0$ & $0$ \\
$e_4$ & $-e_2$ & $0$ & $0$ & $0$ & $0$ \\
$e_5$ & $0$ & $-e_3$ & $0$ & $0$ & $0$
\end{tabular}
\end{center}
Let us classify all the algebra morphisms $\alpha \colon A \to A$. From the multiplication table of $A$, it follows that $\alpha$ is determined by its values at $e_1$, $e_4$, and $e_5$.  With a bit of computation, one can show that $\alpha \colon A \to A$ is an algebra morphism if and only if it has the form
\[
\begin{split}
\alpha(e_1) &= a_1e_1 + a_2e_2 + a_3e_3 + a_4e_4 + a_5e_5,\\
\alpha(e_2) &= (a_1b_4 - a_4b_1)e_2 + (a_2b_5 - a_5b_2)e_3,\\
\alpha(e_3) &= (a_1b_4 - a_4b_1)c_5e_3,\\
\alpha(e_4) &= b_1e_1 + b_2e_2 + b_3e_3 + b_4e_4 + b_5e_5,\\
\alpha(e_5) &= c_1e_1 + c_2e_2 + c_3e_3 + c_4e_4 + c_5e_5
\end{split}
\]
with $a_i, b_j, c_k \in \bk$, such that
\[
\begin{split}
a_5(a_4b_1 - a_1b_4) &= 0 = b_5(a_4b_1 - a_1b_4),\\
a_1c_4 = a_4c_1,\quad a_2c_5 &= a_5c_2,\quad b_1c_4 = b_4c_1,\quad b_2c_5 = b_5c_2.
\end{split}
\]
For each such algebra morphism $\alpha \colon A \to A$, by Theorem \ref{thm:maltsevtp1} there is a Hom-Maltsev algebra
\[
A_\alpha = (A,[-,-]_\alpha = \alpha\circ[-,-],\alpha)
\]
whose multiplication table is:
\begin{center}
\begin{tabular}{c|ccccc}
$[-,-]_\alpha$ & $e_1$ & $e_2$ & $e_3$ & $e_4$ & $e_5$ \\ \hline
$e_1$ & $0$ & $0$ & $0$ & $\alpha(e_2)$ & $0$ \\
$e_2$ & $0$ & $0$ & $0$ & $0$ & $\alpha(e_3)$ \\
$e_3$ & $0$ & $0$ & $0$ & $0$ & $0$ \\
$e_4$ & $-\alpha(e_2)$ & $0$ & $0$ & $0$ & $0$ \\
$e_5$ & $0$ & $-\alpha(e_3)$ & $0$ & $0$ & $0$
\end{tabular}
\end{center}
The Hom-Maltsev algebra $A_\alpha$ is in general not Hom-Lie, since
\[
\begin{split}
J_{A_\alpha}(e_1,e_4,e_5) &= \alpha^2(J_A(e_1,e_4,e_5)) \quad\text{(by \eqref{jaalphaj})}\\
&= \alpha^2(e_3)\\
&= (a_1b_4 - a_4b_1)^2c_5^2e_3,
\end{split}
\]
which is not equal to $0$ whenever $(a_1b_4 - a_4b_1)c_5 \not= 0$.

In strong contrast with Example \ref{ex:4dmaltsev}, we claim that $(A,[-,-]_\alpha)$ is \emph{always} a Maltsev algebra, regardless of what algebra morphism $\alpha \colon A \to A$ we choose.  Since $[-,-]_\alpha$ is anti-symmetric, we only need to see that the Maltsev identity \eqref{maltsev} holds.  Indeed, the images of $[-,-]$ and $[-,-]_\alpha$  are both contained in $span\{e_2,e_3\}$, and $\alpha(e_3)$ lies in $span\{e_3\}$.  It follows that
\[
[[x,y]_\alpha,z]_\alpha \subseteq span\{e_3\},
\]
which implies that
\begin{equation}
\label{xyzw}
[[[x,y]_\alpha,z]_\alpha,w]_\alpha = 0
\end{equation}
for all $w,x,y,z \in A$.  Thus, if $J'$ denotes the usual Jacobian of $(A,[-,-]_\alpha)$ as in \eqref{j'}, then
\begin{equation}
\label{j'1}
[J'(x,y,z),w]_\alpha = 0,
\end{equation}
which is in particular true when $w = x$.  On the other hand, we have
\[
\begin{split}
J'(x,y,[x,z]_\alpha) &= [[x,y]_\alpha,[x,z]_\alpha]_\alpha + [[[x,z]_\alpha,x]_\alpha,y]_\alpha + [[y,[x,z]_\alpha]_\alpha,x]_\alpha\\
&= \alpha^2([[x,y],[x,z]]) \quad\text{(by \eqref{xyzw} and multiplicativity of $\alpha$)}.
\end{split}
\]
Since both $[x,y]$ and $[x,z]$ lie in $span\{e_2,e_3\}$, it follows from the multiplication table of $A$ that $[[x,y],[x,z]] = 0$.  Thus, we have
\begin{equation}
\label{j'2}
J'(x,y,[x,z]_\alpha) = 0
\end{equation}
for all $x,y,z \in A$.  It follows from \eqref{j'1} and \eqref{j'2} that the Maltsev identity \eqref{maltsev} holds in $(A,[-,-]_\alpha)$.

Moreover, certain choices of algebra morphisms $\beta \colon A \to A$ make $A_\beta$ into a Hom-Lie algebra and $(A,[-,-]_\beta)$ into a Lie algebra.  For example, consider the algebra morphism $\beta$ on $A$ given by
\[
\beta(e_1) = ae_1,\quad \beta(e_2) = abe_2,\quad \beta(e_4) = be_4,\quad\beta(e_3) = 0 = \beta(e_5),
\]
where $a, b\in \bk$ are arbitrary scalars.  The only non-zero brackets in $A_\beta$ involving the basis elements are
\[
[e_1,e_4]_\beta = abe_2 = -[e_4,e_1]_\beta.
\]
This implies that $J_{A_\beta}$ and $J'$ (the usual Jacobian in $(A,[-,-]_\beta)$) are both equal to $0$, so $A_\beta$ is a Hom-Lie algebra and $(A,[-,-]_\beta)$ is a Lie algebra.
\qed
\end{example}

\section{Hom-alternative algebras are Hom-Maltsev-admissible}
\label{sec:homalternative}

The main purpose of this section is to show that every Hom-alternative algebra \cite{mak} gives rise to a Hom-Maltsev algebra via the commutator bracket (Theorem \ref{thm:homaltmaltsev}).  This means that Hom-alternative algebras are all Hom-Maltsev-admissible algebras, generalizing the well-known fact that alternative algebras are Maltsev-admissible.  More properties of Hom-alternative algebras are considered in sections \ref{sec:jordan} and  \ref{sec:moufang}.  At the end of this section, we consider an eight-dimensional, non-Hom-Lie Hom-Maltsev algebra arising from the octonions (Example \ref{ex:oct}).

Let us begin with some relevant definitions.

\begin{definition}
\label{def:alternating}
Let $V$ be a $\bk$-module and $f \colon V^{\otimes n} \to V$ be an $n$-linear map for some $n \geq 2$.  We say that $f$ is \textbf{alternating} if
\[
f = \epsilon(\pi) f \circ \pi
\]
for each permutation $\pi$ on $n$ letters, where $\epsilon(\pi) \in \{\pm 1\}$ is the signature of $\pi$.
\end{definition}

Since we are working over a field $\bk$ of characteristic $0$, the following characterizations of alternating maps are well-known facts in basic linear algebra and group theory.  We, therefore, omit the proof.  We will use the following Lemma without further comment.

\begin{lemma}
\label{lem:alternating}
Let $V$ be a $\bk$-module and $f \colon V^{\otimes n} \to V$ be an $n$-linear map.  Then the following statements are equivalent:
\begin{enumerate}
\item
$f$ is alternating.
\item
$f(x_1,\ldots,x_n) = 0$ whenever $x_i = x_j$ for some $i \not= j$.
\item
$f = -f\circ\iota$ for each transposition $\iota$ on $n$ letters.
\item
$f = (-1)^{n-1}f\circ\xi$ and $f = -f\circ\eta$, where $\xi$ is the cyclic permutation $(12\cdots n)$ and $\eta$ is the adjacent transposition $(n-1,n)$.
\end{enumerate}
\end{lemma}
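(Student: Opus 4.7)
The plan is to establish (1) $\Rightarrow$ (2) $\Rightarrow$ (3) $\Rightarrow$ (1) as a cycle and then show (3) $\Leftrightarrow$ (4) separately. The first three implications are standard linearization/polarization arguments; the only genuinely nonroutine step is (4) $\Rightarrow$ (3), which rests on a small piece of group theory.

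For (1) $\Rightarrow$ (2): if $x_i = x_j$ with $i \neq j$, then applying (1) to the transposition $\iota = (i\,j)$ gives $f = -f \circ \iota$; but the right-hand side evaluated on our tuple recovers $f(x_1, \ldots, x_n)$ because swapping the equal entries $x_i$ and $x_j$ changes nothing. Hence $2 f(x_1, \ldots, x_n) = 0$, and so $f(x_1, \ldots, x_n) = 0$ because $\operatorname{char} \bk = 0$. For (2) $\Rightarrow$ (3): fix distinct indices $i, j$, evaluate $f$ on a tuple whose $i$th and $j$th entries are both $x_i + x_j$, expand by multilinearity, and use (2) to kill the two diagonal terms; what remains is $f(\ldots, x_i, \ldots, x_j, \ldots) + f(\ldots, x_j, \ldots, x_i, \ldots) = 0$, which is (3) applied to $\iota = (i\,j)$. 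For (3) $\Rightarrow$ (1): factor $\pi = \iota_1 \cdots \iota_k$ into transpositions and iterate (3) to get $f \circ \pi = (-1)^k f = \epsilon(\pi) f$.

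The implication (1) $\Rightarrow$ (4) is immediate, since $\xi = (1\,2\,\cdots\,n)$ is a product of $n-1$ transpositions (so $\epsilon(\xi) = (-1)^{n-1}$) and $\eta = (n-1, n)$ is itself a transposition. For the converse (4) $\Rightarrow$ (3), the key observation is that conjugation by $\xi^k$ sends a transposition $(a, b)$ to $(a+k, b+k) \bmod n$, so the conjugates $\xi^k \eta \xi^{-k}$ for $k = 0, 1, \ldots, n-1$ exhaust the cyclically adjacent transpositions $(n-1, n), (n, 1), (1, 2), \ldots, (n-2, n-1)$, which generate $S_n$. From (4) one obtains $f \circ \xi^k = (-1)^{k(n-1)} f$ for every $k \in \mathbb{Z}$, and then the computation
\[
f \circ (\xi^k \eta \xi^{-k}) \;=\; (-1)^{k(n-1)} \cdot (-1) \cdot (-1)^{k(n-1)} f \;=\; -f
\]
shows that $f$ anti-commutes with every cyclically adjacent transposition. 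Since any transposition on $n$ letters is a word in these, the same multiplicative propagation of signs used in (3) $\Rightarrow$ (1) yields $f \circ \iota = -f$ for every transposition $\iota$, which is (3).

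The only nonroutine piece is the group-theoretic bookkeeping in (4) $\Rightarrow$ (3), where I would spend most of the effort verifying that the $-1$ sign is genuinely preserved under conjugation and then under expressing a general transposition as a product of cyclically adjacent ones; all other steps are standard. The hypothesis $\operatorname{char} \bk = 0$ enters in an essential way only at the division by $2$ in (1) $\Rightarrow$ (2).
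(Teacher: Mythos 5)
Your proof is correct. Note that the paper deliberately omits a proof of this lemma, stating that these equivalences are ``well-known facts in basic linear algebra and group theory,'' so there is no argument of the author's to compare against; your write-up simply supplies the standard one, and every step checks out. The chain (1) $\Rightarrow$ (2) $\Rightarrow$ (3) $\Rightarrow$ (1) is the usual polarization argument (with characteristic $0$ used only to divide by $2$ in (1) $\Rightarrow$ (2)), and your treatment of (4) $\Rightarrow$ (3) via the conjugates $\xi^k\eta\xi^{-k}$ is exactly right: the signs $(-1)^{k(n-1)}$ coming from $f\circ\xi^{\pm k}$ cancel, so every cyclically adjacent transposition anti-commutes with $f$, and since a general transposition is an odd-length word in these, the sign propagates correctly. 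One small point worth being aware of (though it does not affect your argument): with the paper's convention $\theta(v_1\otimes\cdots\otimes v_n)=v_{\theta(1)}\otimes\cdots\otimes v_{\theta(n)}$, the assignment $\theta\mapsto(\text{induced operator})$ is an anti-homomorphism rather than a homomorphism, but since the signature is a homomorphism into an abelian group, all of your sign computations are unaffected.
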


\begin{definition}
\label{def:homalt}
Let $(A,\mu,\alpha)$ be a Hom-algebra (Definition \ref{def:homalgebra}).  Then $A$ is called a:
\begin{enumerate}
\item
\textbf{Hom-associative algebra} \cite{ms} if $as_A = 0$, where $as_A$ is the Hom-associator \eqref{homassociator};
\item
\textbf{Hom-alternative algebra} \cite{mak} if $as_A$ is alternating;
\item
\textbf{Hom-flexible algebra} \cite{ms} if $as_A(x,y,x) = 0$ for all $x,y \in A$.
\end{enumerate}
\end{definition}

It follows from the above definitions that a Hom-associative algebra is also a Hom-alternative algebra and that a Hom-alternative algebra is also a Hom-flexible algebra.  Also, when $\alpha = Id$ in Definition \ref{def:homalt}, we recover the usual notions of associative, alternative, and flexible algebras, respectively.

\begin{remark}
In \cite{mak} a Hom-alternative algebra was actually defined as a Hom-algebra $(A,\mu,\alpha)$ that satisfies both
\begin{equation}
\label{lefthomalt}
as_A(x,x,y) = 0 \quad\text{(left Hom-alternativity)}
\end{equation}
and
\begin{equation}
\label{righthomalt}
as_A(x,y,y) = 0 \quad\text{(right Hom-alternativity)}
\end{equation}
for all $x,y \in A$.  It is shown in \cite{mak} that this definition is equivalent to the one in Definition \ref{def:homalt}.  Indeed, if $as_A$ is alternating, then it is clearly also left and right Hom-alternative.  Conversely, left (right) Hom-alternativity is equivalent to $as_A$ being alternating in the first (last) two variables.  Since the transpositions $(1~2)$ and $(2~3)$ generate the symmetric group on three letters, one infers that left and right Hom-alternativity together imply that $as_A$ is alternating.
\end{remark}

To state the main result of this section, we need the following definition.  Recall that $\tau \colon V \otimes W \cong W \otimes V$ denotes the twist isomorphism, $\tau(v \otimes w) = w \otimes v$.

\begin{definition}
\label{def:commutatoralgebra}
Let $(A,\mu,\alpha)$ be a Hom-algebra.  Define its \textbf{commutator Hom-algebra} as the Hom-algebra
\[
A^- = (A,[-,-] = \mu \circ (Id - \tau),\alpha).
\]
The multiplication $[-,-] = \mu \circ (Id - \tau)$ is called the \textbf{commutator bracket} of $\mu$.  We call a Hom-algebra $A$ \textbf{Hom-Maltsev-admissible} (resp. \textbf{Hom-Lie-admissible} \cite{ms}) if $A^-$ is a Hom-Maltsev (resp. Hom-Lie) algebra (Definition \ref{def:hommaltsev}).
\end{definition}

\begin{example}
\label{ex:hmadmissible}
Since Hom-Lie algebras are all Hom-Maltsev algebras (Example \ref{ex:homlie}), every Hom-Lie-admissible algebra is also Hom-Maltsev-admissible.  In particular, since every $G$-Hom-associative algebra (e.g., Hom-associative, Hom-Lie, or Hom-pre-Lie algebra) is Hom-Lie-admissible \cite{ms} (Proposition 2.7), it is also Hom-Maltsev-admissible.  In section \ref{sec:hommaltsevad}, we will give examples of Hom-Maltsev-admissible algebras that are not Hom-Lie-admissible.\qed
\end{example}

\begin{example}
\label{ex:malad}
A \textbf{Maltsev-admissible algebra} is defined as an algebra $(A,\mu)$ for which the commutator algebra $A^- = (A,[-,-]=\mu\circ(Id - \tau))$ is a Maltsev algebra, i.e., $A^-$ satisfies the Maltsev identity \eqref{maltsev} (or equivalently \eqref{maltsevidentity}).  Identifying algebras as Hom-algebras with identity twisting maps (Remark \ref{rk:classicalalgebra}), a Maltsev-admissible algebra is equivalent to a Hom-Maltsev-admissible algebra with $\alpha = Id$.\qed
\end{example}

It is proved in \cite{ms} that, given a Hom-associative algebra $A$, its commutator Hom-algebra $A^-$ is a Hom-Lie algebra.  Also, the commutator algebra of any alternative algebra is a Maltsev algebra.  The following main result of this section generalizes both of these facts.  It gives us a large class of Hom-Maltsev-admissible algebras that are in general not Hom-Lie-admissible.

\begin{theorem}
\label{thm:homaltmaltsev}
Every Hom-alternative algebra is Hom-Maltsev-admissible.
\end{theorem}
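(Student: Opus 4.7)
The bracket $[a,b] := ab - ba$ on the commutator Hom-algebra $A^- = (A,[-,-],\alpha)$ is manifestly anti-symmetric, and the multiplicativity of $\alpha$ with respect to $[-,-]$ is inherited from its multiplicativity with respect to $\mu$. So the entire content of the theorem is the Hom-Maltsev identity \eqref{hommaltsevid} for $A^-$, and I plan to follow the classical strategy (alternative $\Rightarrow$ Maltsev-admissible), carefully lifted to the Hom-setting.

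The first reduction is to express the Hom-Jacobian of $A^-$ in terms of the Hom-associator of $A$. Expanding
\[
J_{A^-}(x,y,z) = \sum_{\text{cyclic}} \bigl((xy)\alpha(z) - (yx)\alpha(z) - \alpha(z)(xy) + \alpha(z)(yx)\bigr),
\]
I would repeatedly apply the relation $(ab)\alpha(c) = \alpha(a)(bc) + as_A(a,b,c)$ together with the alternating property of $as_A$ to each of the twelve summands. A bookkeeping argument, in which the non-associator remnants cyclically regroup into $-J_{A^-}(x,y,z)$, then yields the clean formula
\[
J_{A^-}(x,y,z) = 6\, as_A(x,y,z),
\]
which is the Hom-analogue of the classical identity $J = 6\, as$ valid in alternative algebras.

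Given this reduction, the Hom-Maltsev identity for $A^-$ collapses (after dividing by $6$) to the single identity
\[
as_A(\alpha(x), \alpha(y), xz) - as_A(\alpha(x), \alpha(y), zx) = as_A(x,y,z)\,\alpha^2(x) - \alpha^2(x)\,as_A(x,y,z)
\]
inside $A$. To establish it, I would invoke the Hom-Bruck-Kleinfeld function (Definition~\ref{def:bk}) and prove its vanishing on any Hom-alternative algebra, as the Hom-analogue of the classical Teichm\"uller relation. This vanishing, combined with the alternating property of $as_A$ and the linearized left/right Hom-alternative identities, produces exactly the symmetric interaction between $as_A$ and multiplication displayed above.

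The main obstacle is this last step. In the classical case, the desired identity is a routine consequence of linearizing left and right alternativity, but in the Hom-setting every substitution must carry the correct power of $\alpha$ in each slot, and several plausible-looking candidate identities differ only by where $\alpha$ appears. The Hom-Bruck-Kleinfeld function is the right tool precisely because it packages all of the ``how does the Hom-associator interact with one more multiplication'' relations simultaneously, rendering the final $\alpha$-twist bookkeeping tractable and closing the proof.
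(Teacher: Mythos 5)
Your overall architecture coincides with the paper's: reduce to the Hom-Maltsev identity for $A^-$, establish $J_{A^-} = 6\,as_A$ (the paper's Lemma \ref{lem:hahj} plus Proposition \ref{prop2:homalt}), and thereby collapse the problem to the single associator identity $as_A(\alpha(x),\alpha(y),[x,z]) = [as_A(x,y,z),\alpha^2(x)]$ (the paper's Corollary \ref{cor:asxyxz}). But the step you describe for closing that last identity is wrong as stated: the Hom-Bruck-Kleinfeld function $f$ of Definition \ref{def:bk} does \emph{not} vanish on Hom-alternative algebras, and no argument will prove that it does. Already for the octonions with $\alpha = Id$ one has $f = \tfrac{1}{3}F$ with $F(w,x,y,z) = [w,as(x,y,z)] - [z,as(w,x,y)] + [y,as(z,w,x)] - [x,as(y,z,w)]$, which is a nonzero alternating $4$-linear map precisely because $\oct$ is not associative. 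What is true, and what the proof actually requires, is that $f$ is \emph{alternating}; the needed consequences are only its vanishing on repeated arguments, e.g.\ $f(x,z,x,y)=0$ and $f(y,x,z,x)=0$, which yield the identities $as(\alpha(x),\alpha(y),xz) = as(x,y,z)\alpha^2(x)$ and $as(\alpha(x),\alpha(y),zx) = \alpha^2(x)as(x,y,z)$ of Corollary \ref{cor1:homalt}, and these two subtract to give Corollary \ref{cor:asxyxz}.

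Moreover, establishing that $f$ is alternating is the genuine technical heart of the theorem, and your proposal leaves it unaddressed (you have also conflated it with the Teichm\"uller cocycle relation, which is a different statement: the cocycle identity \eqref{homasscocycle} holds in \emph{every} Hom-algebra and is an input to, not a restatement of, the skew-symmetry of $f$). The paper's route is: prove the twisted cocycle identity \eqref{ass3cocycle}; use it to show $F = f\circ(Id - \rho + \rho^2)$ (Lemma \ref{lem2:homalt}); combine this with the manifest relation $F\circ\xi = -F$ to get $f = -f\circ\xi$, and with the alternating property of $as_A$ in the last two slots to get $f = -f\circ\eta$; conclude $f$ is alternating since $\xi$ and $\eta$ generate $S_4$. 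Some such argument must be supplied for your proof to go through. (A secondary remark: your bookkeeping description for $J_{A^-} = 6\,as_A$ — remnants regrouping into $-J_{A^-}$ — would give $J_{A^-} = 3\,as_A$; the correct computation simply pairs the twelve terms into six signed Hom-associators whose signed sum is $6\,as_A$ by alternation.)
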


In particular, Hom-alternative algebras are all Hom-flexible, Hom-Maltsev-admissible algebras.  Examples of Hom-flexible, Hom-Maltsev-admissible algebras that are not Hom-alternative are considered in section \ref{sec:hommaltsevad}.

The proof of Theorem \ref{thm:homaltmaltsev} depends on the Hom-type analogues of some identities in alternative algebras, most of which are from \cite{bk}.  We will first establish some identities about the Hom-associator and the Hom-Jacobian.  Then we will go back to the proof of Theorem \ref{thm:homaltmaltsev}.  In what follows, we often write $\mu(a,b)$ as $ab$ and omit the subscript in the Hom-associator $as_A$ \eqref{homassociator} when there is no danger of confusion.

The following result is a sort of cocycle condition for the Hom-associator of a Hom-alternative algebra.

\begin{lemma}
\label{lem1:homalt}
Let $(A,\mu,\alpha)$ be a Hom-alternative algebra.  Then the identity
\begin{equation}
\label{ass3cocycle}
\begin{split}
as(wx,\alpha(y),\alpha(z)) &- as(xy,\alpha(z),\alpha(w)) + as(yz,\alpha(w),\alpha(x))\\
&= \alpha^2(w)as(x,y,z) + as(w,x,y)\alpha^2(z)
\end{split}
\end{equation}
holds for all $w,x,y,z \in A$.
\end{lemma}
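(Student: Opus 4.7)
The plan is to derive this identity in two stages: first establish an unconditional ``Hom-Teichm\"uller identity'' that holds in \emph{any} (multiplicative) Hom-algebra, and then use the alternating property of the Hom-associator to rewrite it into the form stated in the lemma.

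More precisely, the intermediate identity I aim to prove first is
\[
as(wx,\alpha(y),\alpha(z)) - as(\alpha(w),xy,\alpha(z)) + as(\alpha(w),\alpha(x),yz) = as(w,x,y)\alpha^2(z) + \alpha^2(w)\,as(x,y,z),
\]
valid in any Hom-algebra $(A,\mu,\alpha)$. To prove this, I would expand each of the three associators on the left using the definition $as(a,b,c) = (ab)\alpha(c) - \alpha(a)(bc)$, and apply the multiplicativity of $\alpha$ to expressions like $\alpha(wx)=\alpha(w)\alpha(x)$ and $\alpha(yz)=\alpha(y)\alpha(z)$. This produces six terms on the left-hand side; two of them, namely $(\alpha(w)\alpha(x))\alpha(yz)$ and its negative, cancel immediately, while the remaining four terms regroup as
\[
\bigl[(wx)\alpha(y) - \alpha(w)(xy)\bigr]\alpha^2(z) + \alpha^2(w)\bigl[(xy)\alpha(z) - \alpha(x)(yz)\bigr],
\]
which is precisely $as(w,x,y)\alpha^2(z) + \alpha^2(w)\,as(x,y,z)$. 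This step requires no alternativity, only multiplicativity of $\alpha$.

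For the second stage, I would invoke the hypothesis that $A$ is Hom-alternative, so $as$ is alternating. By Lemma \ref{lem:alternating} (specifically the cyclic-invariance characterization for $n=3$), we have
\[
as(a,b,c) = as(b,c,a) = as(c,a,b)
\]
for all $a,b,c\in A$. Applying this to $as(\alpha(w),xy,\alpha(z))$ gives $as(xy,\alpha(z),\alpha(w))$, and applying it to $as(\alpha(w),\alpha(x),yz)$ gives $as(yz,\alpha(w),\alpha(x))$. Substituting these into the Hom-Teichm\"uller identity yields exactly \eqref{ass3cocycle}.

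The main obstacle is really just the bookkeeping in the first stage: keeping track of eight signed terms in the expansion and correctly applying multiplicativity of $\alpha$ to the ``mixed'' factors such as $\alpha(wx)$ and $\alpha(yz)$ so that the two extraneous terms cancel cleanly. Once the unconditional identity is in hand, the passage to the stated form is a one-line application of cyclic invariance.
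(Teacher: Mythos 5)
Your proposal is correct and follows essentially the same route as the paper: the paper first proves the same unconditional identity \eqref{homasscocycle} in any multiplicative Hom-algebra by expanding the three Hom-associators and cancelling $\alpha(wx)\alpha(yz)$ against its negative, and then invokes the alternating property of $as$ to permute the middle two terms cyclically into the form \eqref{ass3cocycle}. No gaps.
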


\begin{proof}
First we claim that for \emph{any} Hom-algebra $(A,\mu,\alpha)$ (Hom-alternative or otherwise), we have
\begin{equation}
\label{homasscocycle}
\begin{split}
as(wx,\alpha(y),\alpha(z)) &- as(\alpha(w),xy,\alpha(z)) + as(\alpha(w),\alpha(x),yz)\\
&= \alpha^2(w)as(x,y,z) + as(w,x,y)\alpha^2(z)
\end{split}
\end{equation}
for all $w,x,y,z \in A$.  Indeed, starting from the left-hand side of \eqref{homasscocycle}, we have:
\[
\begin{split}
as(wx,\alpha(y),\alpha(z)) &- as(\alpha(w),xy,\alpha(z)) + as(\alpha(w),\alpha(x),yz)\\
&= ((wx)\alpha(y))\alpha^2(z) - \alpha(wx)(\alpha(y)\alpha(z)) - (\alpha(w)(xy))\alpha^2(z)\\
&\relphantom{} + \alpha^2(w)((xy)\alpha(z)) + (\alpha(w)\alpha(x))\alpha(yz) - \alpha^2(w)(\alpha(x)(yz))\\
&= \left\{(wx)\alpha(y) - \alpha(w)(xy)\right\}\alpha^2(z) + \alpha^2(w)\left\{(xy)\alpha(z) - \alpha(x)(yz)\right\}\\
&\relphantom{} - \alpha(wx)\alpha(yz) + \alpha(wx)\alpha(yz)\\
&= \alpha^2(w)as(x,y,z) + as(w,x,y)\alpha^2(z).
\end{split}
\]
In the second equality above, we used the multiplicativity of $\alpha$ twice. We have established \eqref{homasscocycle}.  Now for a Hom-alternative algebra $A$, its Hom-associator $as$ is alternating, so \eqref{homasscocycle} implies \eqref{ass3cocycle}.
\end{proof}

\begin{remark}
Note that when $\alpha = Id$, the condition \eqref{homasscocycle} says that the (Hom-)associator $as \in \Hom(A^{\otimes 3},A)$ is a Hochschild $3$-cocycle.
\end{remark}

In a Hom-alternative algebra, the Hom-associator is an alternating map on three variables.  We now build a map on four variables using the Hom-associator that, as we will prove shortly, is alternating in a Hom-alternative algebra.

\begin{definition}
\label{def:bk}
Let $(A,\mu,\alpha)$ be a Hom-algebra.  Define the \textbf{Hom-Bruck-Kleinfeld function} $f \colon A^{\otimes 4} \to A$ as the multi-linear map
\begin{equation}
\label{f}
f(w,x,y,z) = as(wx,\alpha(y),\alpha(z)) - as(x,y,z)\alpha^2(w) - \alpha^2(x)as(w,y,z)
\end{equation}
for $w,x,y,z \in A$.  Define another multi-linear map $F \colon A^{\otimes 4} \to A$ as
\begin{equation}
\label{F}
F = [-,-] \circ \left(\alpha^2 \otimes as\right) \circ (Id - \xi + \xi^2 - \xi^3),
\end{equation}
where $[-,-] = \mu \circ (Id - \tau)$ is the commutator bracket of $\mu$ and $\xi$ is the cyclic permutation
\[
\xi(w \otimes x \otimes y \otimes z) = z \otimes w \otimes x \otimes y.
\]
\end{definition}

In terms of elements, the map $F$ is given by
\begin{equation}
\label{F'}
\begin{split}
F(w,x,y,z) &= [\alpha^2(w),as(x,y,z)] - [\alpha^2(z),as(w,x,y)]\\
&\relphantom{} + [\alpha^2(y),as(z,w,x)] - [\alpha^2(x),as(y,z,w)].
\end{split}
\end{equation}
The Hom-Bruck-Kleinfeld function $f$ is the Hom-type analogue of a map studied by Bruck and Kleinfeld (\cite{bk} (2.7)).  It is closely related to the map $F$, as we now show.

\begin{lemma}
\label{lem2:homalt}
In a Hom-alternative algebra $(A,\mu,\alpha)$, we have
\[
F = f \circ (Id - \rho + \rho^2),
\]
where $\rho = \xi^3$ is the cyclic permutation $\rho(w \otimes x \otimes y \otimes z) = x \otimes y \otimes z \otimes w$.
\end{lemma}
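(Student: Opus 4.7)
The plan is to unwind the right-hand side $f \circ (Id - \rho + \rho^2)$ on elements, apply the cocycle identity \eqref{ass3cocycle} to one block of terms, and use the alternating property of $as$ to rewrite the remaining terms as the commutators that appear in $F$ (as written out in \eqref{F'}).

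First I would evaluate on an element: $\bigl(f \circ (Id - \rho + \rho^2)\bigr)(w,x,y,z) = f(w,x,y,z) - f(x,y,z,w) + f(y,z,w,x)$. Expanding each $f$ via \eqref{f} produces nine terms that I would organize into three groups. The first group consists of the three ``$as(\text{product},\alpha(\cdot),\alpha(\cdot))$'' terms,
\[
as(wx,\alpha(y),\alpha(z)) - as(xy,\alpha(z),\alpha(w)) + as(yz,\alpha(w),\alpha(x)),
\]
which is precisely the left-hand side of the Hom-alternative cocycle identity \eqref{ass3cocycle}; by Lemma \ref{lem1:homalt} it equals $\alpha^2(w)\,as(x,y,z) + as(w,x,y)\,\alpha^2(z)$.

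The second group is $-as(x,y,z)\alpha^2(w) + as(y,z,w)\alpha^2(x) - as(z,w,x)\alpha^2(y)$, and the third is $-\alpha^2(x)as(w,y,z) + \alpha^2(y)as(x,z,w) - \alpha^2(z)as(y,w,x)$. Since $as$ is alternating on a Hom-alternative algebra, cyclic $3$-rotations leave it invariant, so $as(w,y,z) = as(y,z,w)$, $as(x,z,w) = as(z,w,x)$, and $as(y,w,x) = as(w,x,y)$. Substituting these and pairing the output of the cocycle step with the surviving terms would produce the four commutators
\[
[\alpha^2(w),as(x,y,z)] - [\alpha^2(z),as(w,x,y)] + [\alpha^2(y),as(z,w,x)] - [\alpha^2(x),as(y,z,w)],
\]
which is exactly $F(w,x,y,z)$ by \eqref{F'}.

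The only nontrivial step is the application of the cocycle identity, which was already proven in Lemma \ref{lem1:homalt}; the rest is sign bookkeeping with the alternating property. No genuine obstacle remains, so the proof should fit in a short display calculation.
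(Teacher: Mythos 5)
Your proposal is correct and is essentially the paper's own argument read in the opposite direction: the paper starts from the cocycle identity of Lemma \ref{lem1:homalt} and substitutes the definition \eqref{f} into each $as(\text{product},\alpha(\cdot),\alpha(\cdot))$ term, whereas you expand $f\circ(Id-\rho+\rho^2)$ and recognize the cocycle identity in the first group of terms, but the decomposition, the key lemma, and the use of the alternating property of $as$ are identical. No gap.
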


\begin{proof}
By Lemma \ref{lem1:homalt} and \eqref{f}, we have
\[
\begin{split}
\alpha^2(w) as(x,y,z) &+ as(w,x,y)\alpha^2(z)\\
&= as(wx,\alpha(y),\alpha(z)) - as(xy,\alpha(z),\alpha(w)) + as(yz,\alpha(w),\alpha(x))\\
&= f(w,x,y,z) + as(x,y,z)\alpha^2(w) + \alpha^2(x)as(w,y,z)\\
&\relphantom{} - f(x,y,z,w) - as(y,z,w)\alpha^2(x) - \alpha^2(y)as(x,z,w)\\
&\relphantom{} + f(y,z,w,x) + as(z,w,x)\alpha^2(y) + \alpha^2(z)as(y,w,x).
\end{split}
\]
Since the Hom-associator $as$ is alternating, we have $as(y,w,x) = as(w,x,y)$, $as(x,z,w) = as(z,w,x)$, and $as(w,y,z) = as(y,z,w)$.  Therefore, rearranging terms in the above equality, we obtain $F = f \circ (Id - \rho + \rho^2)$ in the explicit form \eqref{F'}.
\end{proof}

The following result is the Hom-type analogue of part of \cite{bk} (Lemma 2.1).

\begin{proposition}
\label{prop:f}
Let $(A,\mu,\alpha)$ be a Hom-alternative algebra.  Then the Hom-Bruck-Kleinfeld function $f$ is alternating.
\end{proposition}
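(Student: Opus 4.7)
The plan is to invoke Lemma \ref{lem:alternating}: to show that $f$ is alternating, it suffices to check that $f(w,x,y,z)$ vanishes whenever any two of its four arguments coincide, which by linearization reduces to the three adjacent-coincidence cases $f(w,x,y,y)$, $f(w,x,x,z)$, and $f(w,w,y,z)$. The case $f(w,x,y,y)=0$ is immediate from the definition \eqref{f}, since each of the three summands contains $as$ with $y$ in two of its three slots. The whole remaining argument will rest on the cocycle identity \eqref{ass3cocycle} from Lemma \ref{lem1:homalt} together with the alternating property of $as$.

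For $f(w,x,x,z)=0$, the summand $as(x,y,z)\alpha^2(w)$ vanishes at $y=x$, so the claim reduces to the identity $as(wx,\alpha(x),\alpha(z)) = \alpha^2(x)\,as(w,x,z)$. I would apply the cocycle identity \eqref{ass3cocycle} three times, with substitutions $(w,x,x,z)$, $(x,w,x,z)$, and $(x,x,w,z)$. This yields three linear relations among the four quantities $as(wx,\alpha(x),\alpha(z))$, $as(xw,\alpha(x),\alpha(z))$, $as(x^2,\alpha(z),\alpha(w))$, and $as(xz,\alpha(w),\alpha(x))$; eliminating the three auxiliary quantities isolates the desired equality.

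For $f(w,w,y,z)=0$, I would first apply \eqref{ass3cocycle} with $x=w$ to rewrite $as(w^2,\alpha(y),\alpha(z))$, reducing the claim to the Hom-type right-multiplication identity $as(wy,\alpha(w),\alpha(z)) = -as(w,y,z)\,\alpha^2(w)$. The key step is a second application of \eqref{ass3cocycle}, this time with the substitution $(w,y,z,w)$, which after the cyclic simplification $as(y,z,w) = as(w,y,z)$ produces an identity whose right-hand side $\alpha^2(w)\,as(w,y,z) + as(w,y,z)\,\alpha^2(w)$ exhibits both the left and right multiplications by $\alpha^2(w)$. Plugging in the previously proved case $f(w,x,x,z)=0$ in the form $as(zw,\alpha(w),\alpha(y)) = \alpha^2(w)\,as(w,y,z)$, and exploiting the alternating property of $as$ in its last two slots, collapses the identity to the desired right-multiplication relation.

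The main obstacle will be this last case: the substitution $(w,y,z,w)$ in the cocycle is not an a priori obvious choice, and its value lies precisely in making both multiplications of $\alpha^2(w)$ appear simultaneously. This step is the Hom-analogue of the classical alternative-algebra identity $(w^2,y,z) = w(w,y,z) + (w,y,z)w$, and combining it with the middle-slot case forces the right-hand multiplication by $\alpha^2(w)$ to match the left. Once $f$ vanishes in all three adjacent-coincidence cases, linearization in each pair of adjacent slots supplies the three corresponding antisymmetries, and Lemma \ref{lem:alternating} concludes that $f$ is alternating on $A^{\otimes 4}$.
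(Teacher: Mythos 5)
Your proposal is correct, and it takes a genuinely different route from the paper. The paper's proof is global and permutation-theoretic: it introduces the auxiliary map $F$ of \eqref{F}, observes that $F\circ\xi=-F$ is immediate from the definition, invokes Lemma \ref{lem2:homalt} ($F=f\circ(Id-\rho+\rho^2)$) to deduce $f\circ(Id+\xi)=0$, and pairs this with the easy sign change of $f$ under the transposition of its last two slots, using that $\xi$ and that transposition generate $S_4$. You instead verify directly that $f$ vanishes on each adjacent coincidence and conclude via Lemma \ref{lem:alternating}. I checked your eliminations: for $f(w,x,x,z)$ the three substitutions $(w,x,x,z)$, $(x,w,x,z)$, $(x,x,w,z)$ into \eqref{ass3cocycle} give, after flipping signs with the alternating property of $as$, the system $A-C+D=0$, $A+B-D=M$, $B-C=-M$ in the quantities $A=as(wx,\alpha(x),\alpha(z))$, $B=as(xw,\alpha(x),\alpha(z))$, $C=as(x^2,\alpha(z),\alpha(w))$, $D=as(xz,\alpha(w),\alpha(x))$, $M=\alpha^2(x)as(w,x,z)$, which indeed forces $2A=2M$; and for $f(w,w,y,z)$ your substitutions $(w,w,y,z)$ and $(w,y,z,w)$ combined with the already-proved middle case do yield $as(wy,\alpha(w),\alpha(z))=-as(w,y,z)\alpha^2(w)$ and hence \eqref{homalt1}. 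The trade-off: the paper's route is shorter once $F$ is available and cleanly isolates where Hom-alternativity enters (only through Lemma \ref{lem2:homalt}), whereas your route is more computational but self-contained modulo Lemma \ref{lem1:homalt}, and it proves the identities \eqref{homalt1}--\eqref{homalt3} of Corollary \ref{cor1:homalt} along the way rather than deriving them afterwards as consequences of the alternating property of $f$.
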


\begin{proof}
First observe that $-F = F \circ \xi$, which follows immediately from the definition \eqref{F} of $F$.  This implies $-F = F \circ \rho$, where $\rho = \xi^3$.  Note that $\rho^3 = \xi$.  Thus, we have:
\[
\begin{split}
0 &= F \circ (Id + \rho)\\
&= f \circ (Id - \rho + \rho^2) \circ (Id + \rho) \quad\text{(by Lemma \ref{lem2:homalt})}\\
&= f \circ (Id + \rho^3)\\
&= f \circ (Id + \xi).
\end{split}
\]
Equivalently, we have
\begin{equation}
\label{fxi}
f = -f\circ\xi,
\end{equation}
so $f$ changes sign under the cyclic permutation $\xi$.  From the definition \eqref{f} of $f$ and the fact that the Hom-associator $as$ is alternating in a Hom-alternative algebra, we infer also that
\begin{equation}
\label{feta}
f = -f\circ\eta,
\end{equation}
where $\eta$ is the adjacent transposition $\eta(w \otimes x \otimes y \otimes z) = w \otimes x \otimes z \otimes y$.  So $f$ changes sign under the transposition $\eta$.  Since the cyclic permutation $\xi$ and the adjacent transposition $\eta$ generate the symmetric group on four letters, we infer from \eqref{fxi} and \eqref{feta} that $f$ is alternating.
\end{proof}

The following identities are consequences of Proposition \ref{prop:f} and are the Hom-type analogues of part of \cite{bk} (Lemma 2.2).  In what follows, we write $\mu(x,x)$ as $x^2$.

\begin{corollary}
\label{cor1:homalt}
Let $(A,\mu,\alpha)$ be a Hom-alternative algebra.  Then:
\begin{equation}
\label{homalt1}
as(x^2,\alpha(y),\alpha(z)) = \alpha^2(x)as(x,y,z) + as(x,y,z)\alpha^2(x),
\end{equation}
\begin{equation}
\label{homalt2}
as(\alpha(x),xy,\alpha(z)) = as(x,y,z)\alpha^2(x) = as(\alpha(x),\alpha(y),xz),
\end{equation}
and
\begin{equation}
\label{homalt3}
as(\alpha(x),yx,\alpha(z)) = \alpha^2(x)as(x,y,z) = as(\alpha(x),\alpha(y),zx)
\end{equation}
for all $x,y,z \in A$.
\end{corollary}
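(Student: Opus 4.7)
The plan is to exploit Proposition \ref{prop:f}, which states that the Hom-Bruck-Kleinfeld function $f$ is alternating in a Hom-alternative algebra. Each identity in the corollary will be obtained by evaluating $f$ on a quadruple in which two entries coincide—so that $f$ vanishes on it by Proposition \ref{prop:f}—and then invoking the alternating property of the Hom-associator $as$ to kill or sign-normalize the remaining terms, producing the stated form.

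For \eqref{homalt1}, I would simply substitute $w = x$ into the defining expression \eqref{f} for $f$. The vanishing of $f(x,x,y,z)$ yields
\[
as(x^2,\alpha(y),\alpha(z)) - as(x,y,z)\alpha^2(x) - \alpha^2(x)as(x,y,z) = 0,
\]
and rearranging gives \eqref{homalt1} directly. For the two equalities of \eqref{homalt2}, I would evaluate $f(x,y,x,z) = 0$ and $f(x,z,x,y) = 0$: in each case the term of the form $\alpha^2(\cdot)as(x,x,\cdot)$ vanishes because $as$ is alternating, the single-$as$ term simplifies via $as(y,x,z) = -as(x,y,z)$, and the surviving product-in-slot-1 term $as(xy,\alpha(x),\alpha(z))$ (respectively $as(xz,\alpha(x),\alpha(y))$) is transported into the required slot by a transposition (costing a sign) or by a $3$-cycle (which is an even permutation and hence costs no sign). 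An entirely analogous calculation with $f(y,x,x,z) = 0$ and $f(z,x,x,y) = 0$—this time exploiting repetition in positions $2$ and $3$—produces the two equalities of \eqref{homalt3}.

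There is no genuine conceptual obstacle here; the whole argument is a bookkeeping exercise in permutations of three or four arguments. The one point requiring care is tracking signs when moving a product such as $xy$ among the three slots of the Hom-associator: in the definition of $f$ the product $wx$ only ever sits in the first slot of $as$, so moving it to slot $2$ is a transposition (sign change), while moving it to slot $3$ is a $3$-cycle (no sign change). Once these permutations are recorded carefully, the three asserted identities fall out mechanically from the six substitutions $f(x,x,y,z)$, $f(x,y,x,z)$, $f(x,z,x,y)$, $f(y,x,x,z)$, and $f(z,x,x,y)$.
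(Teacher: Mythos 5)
Your proposal is correct and follows essentially the same route as the paper: both arguments rest on Proposition \ref{prop:f} (the alternating property of $f$), obtain each identity by evaluating the defining expression \eqref{f} on a quadruple with a repeated entry, and then clean up using the alternating property of $as$. The paper happens to first cyclically permute the target Hom-associator into slot one and then expand at quadruples such as $(x,y,z,x)$, whereas you expand at $(x,y,x,z)$, $(y,x,x,z)$, etc.\ and permute afterwards; your sign bookkeeping (transposition into slot two costs a sign, the $3$-cycle into slot three does not) checks out, so the two proofs differ only in this trivial reorganization.
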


\begin{proof}
To obtain \eqref{homalt1}, set $w = x$ in the definition \eqref{f} of $f$ and use the fact that $f$ is alternating (Proposition \ref{prop:f}).

For \eqref{homalt2} we compute as follows:
\[
\begin{split}
as(\alpha(x),xy,\alpha(z)) &= as(xy,\alpha(z),\alpha(x)) \quad\text{(by alternativity of $as$)}\\
&= f(x,y,z,x) + as(y,z,x)\alpha^2(x) + \alpha^2(y)as(x,z,x)\quad\text{(by \eqref{f})}\\
&= as(x,y,z)\alpha^2(x) \quad\text{(by alternativity of $f$ and $as$)}.
\end{split}
\]
This proves half of \eqref{homalt2}.  For the other half of \eqref{homalt2}, we compute similarly as follows:
\[
\begin{split}
as(\alpha(x),\alpha(y),xz) &= as(xz,\alpha(x),\alpha(y))\quad\text{(by alternativity of $as$)}\\
&= f(x,z,x,y) + as(z,x,y)\alpha^2(x) + \alpha^2(z)as(x,x,y)\quad\text{(by \eqref{f})}\\
&= as(x,y,z)\alpha^2(x)\quad\text{(by alternativity of $f$ and $as$)}.
\end{split}
\]
This finishes the proof of \eqref{homalt2}.

For \eqref{homalt3} we compute as follows:
\[
\begin{split}
as(\alpha(x),yx,\alpha(z)) &= as(yx,\alpha(z),\alpha(x))\quad\text{(by alternativity of $as$)}\\
&= f(y,x,z,x) + as(x,z,x)\alpha^2(y) + \alpha^2(x)as(y,z,x)\quad\text{(by \eqref{f})}\\
&= \alpha^2(x)as(x,y,z)\quad\text{(by alternativity of $f$ and $as$)}.
\end{split}
\]
This proves half of \eqref{homalt3}.  For the other half of \eqref{homalt3}, we compute similarly as follows:
\[
\begin{split}
as(\alpha(x),\alpha(y),zx) &= as(zx,\alpha(x),\alpha(y))\quad\text{(by alternativity of $as$)}\\
&= f(z,x,x,y) + as(x,x,y)\alpha^2(z) + \alpha^2(x)as(z,x,y)\quad\text{(by \eqref{f})}\\
&= \alpha^2(x)as(x,y,z)\quad\text{(by alternativity of $f$ and $as$)}.
\end{split}
\]
This finishes the proof.
\end{proof}

The following result says that every Hom-alternative algebra satisfies a variation of the Hom-Maltsev identity \eqref{hommaltsevid} in which the Hom-Jacobian is replaced by the Hom-associator.

\begin{corollary}
\label{cor:asxyxz}
Let $(A,\mu,\alpha)$ be a Hom-alternative algebra.  Then
\[
as(\alpha(x),\alpha(y),[x,z]) = [as(x,y,z),\alpha^2(x)]
\]
for all $x,y,z \in A$, where $[-,-] = \mu\circ(Id-\tau)$ is the commutator bracket.
\end{corollary}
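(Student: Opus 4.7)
The plan is to reduce the statement to a direct application of the identities in Corollary \ref{cor1:homalt}. Since $[x,z] = xz - zx$ and $as$ is linear in each slot, I would first expand
\[
as(\alpha(x),\alpha(y),[x,z]) = as(\alpha(x),\alpha(y),xz) - as(\alpha(x),\alpha(y),zx).
\]
Then I would invoke the second halves of \eqref{homalt2} and \eqref{homalt3}: the former gives $as(\alpha(x),\alpha(y),xz) = as(x,y,z)\alpha^2(x)$, while the latter gives $as(\alpha(x),\alpha(y),zx) = \alpha^2(x) as(x,y,z)$. Substituting these two evaluations into the difference yields
\[
as(\alpha(x),\alpha(y),[x,z]) = as(x,y,z)\alpha^2(x) - \alpha^2(x)as(x,y,z),
\]
which is precisely $[as(x,y,z),\alpha^2(x)]$ by definition of the commutator bracket.

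There is essentially no obstacle here; the whole content has already been extracted in Corollary \ref{cor1:homalt}, and the present corollary just packages the two pieces together via the commutator. The only thing to be careful about is making sure the two ``xz-variant'' and ``zx-variant'' identities come with the correct left/right placement of $\alpha^2(x)$, since that asymmetry is exactly what produces a commutator rather than zero on the right-hand side.
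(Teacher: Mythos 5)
Your proposal is correct and coincides with the paper's own proof: both expand $[x,z]=xz-zx$ by linearity and then apply the second halves of \eqref{homalt2} and \eqref{homalt3} to obtain $as(x,y,z)\alpha^2(x)-\alpha^2(x)as(x,y,z)=[as(x,y,z),\alpha^2(x)]$. No differences worth noting.
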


\begin{proof}
Indeed, we have
\[
\begin{split}
as(\alpha(x),\alpha(y),[x,z])
&= as(\alpha(x),\alpha(y), xz) - as(\alpha(x),\alpha(y),zx)\\
&= as(x,y,z)\alpha^2(x) - \alpha^2(x)as(x,y,z) \quad\text{(by \eqref{homalt2} and \eqref{homalt3})}\\
&= [as(x,y,z),\alpha^2(x)],
\end{split}
\]
as desired.
\end{proof}

Next we consider the relationship between the Hom-associator \eqref{homassociator} in a Hom-algebra $A$ and the Hom-Jacobian \eqref{homjacobian} in its commutator Hom-algebra $A^-$ (Definition \ref{def:commutatoralgebra}).

\begin{lemma}
\label{lem:hahj}
Let $(A,\mu,\alpha)$ be any Hom-algebra.  Then
\[
J_{A^-} = as_A \circ (Id + \sigma + \sigma^2) \circ (Id - \delta),
\]
where $\sigma(x \otimes y \otimes z) = z \otimes x \otimes y$ and $\delta(x \otimes y \otimes z) = x \otimes z \otimes y$.
\end{lemma}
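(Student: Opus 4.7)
The plan is to prove this identity by direct expansion of both sides as multilinear maps and then matching up the twelve terms that appear. On the left, using the definition of the commutator bracket $[a,b] = ab - ba$ in $A^-$ and the fact that $\alpha$ is the twisting map for $A^-$ as well, we have
\[
J_{A^-}(x,y,z) = [[x,y],\alpha(z)] + [[z,x],\alpha(y)] + [[y,z],\alpha(x)],
\]
and each of the three outer brackets expands into four terms of the shape $(ab)\alpha(c)$ or $\alpha(c)(ab)$, giving twelve monomials in total.

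For the right-hand side, I would first compute the permutation action explicitly: since $\sigma(x\otimes y\otimes z) = z\otimes x\otimes y$ and $\delta(x\otimes y\otimes z) = x\otimes z\otimes y$, the operator $(Id+\sigma+\sigma^2)(Id-\delta)$ is exactly the signed sum $\sum_{\pi\in S_3}\epsilon(\pi)\,\pi$ over the symmetric group on three letters, because the three cyclic rotations are the even permutations and the three transpositions of the entries are the odd ones. Applying $as_A(a,b,c) = (ab)\alpha(c) - \alpha(a)(bc)$ to each of the six permuted triples then yields twelve monomials as well.

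The remaining step is pure bookkeeping: group the six signed Hom-associator values into three pairs according to which two entries of $\{x,y,z\}$ are multiplied together first. The pair consisting of $as_A(x,y,z)$ and $-as_A(y,x,z)$ collects exactly the four monomials $(xy)\alpha(z),(yx)\alpha(z),\alpha(z)(xy),\alpha(z)(yx)$ with the correct signs, reproducing $[[x,y],\alpha(z)]$; cyclically, the pair $\{as_A(z,x,y),-as_A(x,z,y)\}$ reproduces $[[z,x],\alpha(y)]$, and $\{as_A(y,z,x),-as_A(z,y,x)\}$ reproduces $[[y,z],\alpha(x)]$. Summing gives $J_{A^-}(x,y,z)$.

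There is no real obstacle here beyond keeping track of signs and orderings; the identity is genuinely a formal combinatorial rearrangement that requires no use of Hom-alternativity or any other hypothesis on $A$. I would present the proof by displaying the signed six-term expansion of $as_A\circ(Id+\sigma+\sigma^2)\circ(Id-\delta)$ and matching each cyclic triple of terms with the corresponding nested commutator, rather than writing out all twelve monomials by hand.
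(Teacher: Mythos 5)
Your overall strategy---expand both sides into twelve monomials and match---is exactly the paper's proof, and your observation that $(Id+\sigma+\sigma^2)\circ(Id-\delta)$ is the signed sum $\sum_{\pi\in S_3}\epsilon(\pi)\,\pi$ is correct. The identity is indeed a formal rearrangement requiring no hypothesis beyond bilinearity. However, the specific bookkeeping you propose in the final step is wrong. You claim that the pair $as_A(x,y,z)$ and $-as_A(y,x,z)$ collects the four monomials $(xy)\alpha(z)$, $(yx)\alpha(z)$, $\alpha(z)(xy)$, $\alpha(z)(yx)$ and hence reproduces $[[x,y],\alpha(z)]$. It does not: since $as_A(a,b,c)=(ab)\alpha(c)-\alpha(a)(bc)$, you get
\[
as_A(x,y,z)-as_A(y,x,z)=(xy)\alpha(z)-(yx)\alpha(z)-\alpha(x)(yz)+\alpha(y)(xz),
\]
whose last two monomials have $\alpha$ on $x$ and $y$, not on $z$. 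The terms $-\alpha(z)(xy)$ and $+\alpha(z)(yx)$ that you need for $[[x,y],\alpha(z)]$ actually sit inside $+as_A(z,x,y)$ and $-as_A(z,y,x)$ respectively. So each nested commutator is assembled from pieces of \emph{four} distinct signed associators, and the clean decomposition into three pairs fails.

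The fix is simply to do what you were trying to avoid: write out all twelve monomials of the six signed associators and all twelve monomials of $[[x,y],\alpha(z)]+[[z,x],\alpha(y)]+[[y,z],\alpha(x)]$ and check they coincide as multisets with signs --- which they do, and which is precisely the paper's proof. If you want a structured grouping, the correct one is by the position of $\alpha$: the six ``outer'' terms $(ab)\alpha(c)$ of the signed associators supply the six monomials of the form $[a,b]\alpha(c)$, while the six ``inner'' terms $-\alpha(a)(bc)$ supply the six of the form $-\alpha(c)[a,b]$, and these two families recombine into the three commutators. As written, your proof would contain a false intermediate identity, so the pairing claim must be corrected before the argument is sound.
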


\begin{proof}
For $x,y,z \in A$, we have:
\[
\begin{split}
J_{A^-}(x,y,z) &= [[x,y],\alpha(z)] + [[z,x],\alpha(y)] + [[y,z],\alpha(x)]\\
&= (xy)\alpha(z) - (yx)\alpha(z) - \alpha(z)(xy) + \alpha(z)(yx)\\
&\relphantom{} + (zx)\alpha(y) - (xz)\alpha(y) - \alpha(y)(zx) + \alpha(y)(xz)\\
&\relphantom{} + (yz)\alpha(x) - (zy)\alpha(x) - \alpha(x)(yz) + \alpha(x)(zy)\\
&= as_A(x,y,z) + as_A(z,x,y) + as_A(y,z,x)\\
&\relphantom{} - as_A(x,z,y) - as_A(y,x,z) - as_A(z,y,x)\\
&= as_A \circ (Id + \sigma + \sigma^2) \circ (Id - \delta)(x,y,z).
\end{split}
\]
This proves the Lemma.
\end{proof}

\begin{proposition}
\label{prop2:homalt}
Let $(A,\mu,\alpha)$ be a Hom-alternative algebra.  Then we have $J_{A^-} = 6as_A$.
\end{proposition}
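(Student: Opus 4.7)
The plan is to combine the previous Lemma \ref{lem:hahj} with the hypothesis that the Hom-associator $as_A$ is alternating. Lemma \ref{lem:hahj} expresses $J_{A^-}$ as
\[
J_{A^-} = as_A \circ (Id + \sigma + \sigma^2) \circ (Id - \delta),
\]
where $\sigma$ is the $3$-cycle and $\delta$ is the transposition $(2\,3)$. Distributing the composition produces six permutation terms $\pm as_A \circ \pi$ with $\pi \in \{Id, \sigma, \sigma^2, \delta, \sigma\delta, \sigma^2\delta\} = S_3$.

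Next, I would invoke alternativity: since $A$ is Hom-alternative, $as_A$ is alternating, so $as_A \circ \pi = \epsilon(\pi) as_A$ for every permutation $\pi$ of three letters (using Lemma \ref{lem:alternating}). The three permutations coming from the $Id - \delta$ factor contribute coefficient $+1$ (for the identity piece) and coefficient $-1$ (for the $\delta$ piece); combined with the signatures $\epsilon(Id) = +1$ and $\epsilon(\delta) = -1$, both pieces give $+as_A$. The same thing happens for the $\sigma$ and $\sigma^2$ rows, because multiplying by a $3$-cycle does not change signature. In short, each of the six terms evaluates to $+as_A$, and summing gives $J_{A^-} = 6 as_A$.

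I do not foresee any genuine obstacle: the statement is a one-line consequence of Lemma \ref{lem:hahj} together with the sign behavior of alternating trilinear maps. The only thing to be careful about is bookkeeping the signs correctly, i.e., matching the coefficient from $(Id + \sigma + \sigma^2) \circ (Id - \delta)$ with the signature $\epsilon(\pi)$ coming from alternativity, so that the two minus signs cancel for the three odd permutations.
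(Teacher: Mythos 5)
Your proof is correct and is essentially the same as the paper's: the paper likewise combines Lemma \ref{lem:hahj} with the fact that $as_A\circ\sigma = as_A = -as_A\circ\delta$ for an alternating Hom-associator, so that all six permutation terms contribute $+as_A$. No issues.
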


\begin{proof}
Since the Hom-associator $as_A$ is alternating, with the notations in  Lemma \ref{lem:hahj} we have
\[
as_A \circ \sigma = as_A = -as_A \circ \delta.
\]
The result now follows from Lemma \ref{lem:hahj}.
\end{proof}

We are now ready to prove Theorem \ref{thm:homaltmaltsev}.

\begin{proof}[Proof of Theorem \ref{thm:homaltmaltsev}]
Let $(A,\mu,\alpha)$ be a Hom-alternative algebra and $A^- = (A,[-,-],\alpha)$ be its commutator Hom-algebra.  The commutator bracket $[-,-]=\mu\circ(Id - \tau)$ is anti-symmetric.  Thus, it remains to show that the Hom-Maltsev identity \eqref{hommaltsevid} holds in $A^-$, i.e.,
\[
J_{A^-}(\alpha(x),\alpha(y),[x,z]) = [J_{A^-}(x,y,z),\alpha^2(x)].
\]
To prove this, we compute as follows:
\[
\begin{split}
J_{A^-}(\alpha(x),\alpha(y),[x,z])
&= 6 as_A(\alpha(x),\alpha(y),[x,z]) \quad\text{(by Proposition \ref{prop2:homalt})}\\
&= [6as_A(x,y,z),\alpha^2(x)]\quad\text{(by Corollary \ref{cor:asxyxz})}\\
&= [J_{A^-}(x,y,z),\alpha^2(x)]\quad\text{(by Proposition \ref{prop2:homalt})}.
\end{split}
\]
We have shown that $A^-$ is a Hom-Maltsev algebra, so $A$ is Hom-Maltsev-admissible.
\end{proof}

\begin{remark}
By Theorem \ref{thm:homaltmaltsev} the map $A \mapsto A^-$ defines a functor from the category of Hom-alternative algebras to the category of Hom-Maltsev algebras.  Using the combinatorial objects of weighted trees and an argument similar to that in \cite{yau}, one can show that this functor has a left adjoint $M \mapsto U(M)$.  However, we do not know whether there is an analogue of the Poincar\'{e}-Birkhoff-Witt (PBW) Theorem, i.e., whether the canonical map $M \to U(M)$ is injective.  In fact, even in the non-Hom case of Maltsev algebras, it is not known whether there is a PBW Theorem with alternative algebras in place of associative algebras.  Probably the closest result to a PBW Theorem for Maltsev algebras is in \cite{ps}.
\end{remark}

\begin{example}
\label{ex:oct}
In this example, we describe a Hom-alternative algebra (hence Hom-Maltsev-admissible by Theorem \ref{thm:homaltmaltsev}) that is not Hom-Lie-admissible and not alternative.  Recall that the octonions is an eight-dimensional alternative (but not associative) algebra $\oct$ with basis $\{e_0,\ldots,e_7\}$ and the following multiplication table, where $\mu$ denotes the multiplication in $\oct$.
\begin{center}
\begin{tabular}{c|c|c|c|c|c|c|c|c}
$\mu$ & $e_0$ & $e_1$ & $e_2$ & $e_3$ & $e_4$ & $e_5$ & $e_6$ & $e_7$ \\\hline
$e_0$ & $e_0$ & $e_1$ & $e_2$ & $e_3$ & $e_4$ & $e_5$ & $e_6$ & $e_7$ \\\hline
$e_1$ & $e_1$ & $-e_0$ & $e_4$ & $e_7$ & $-e_2$ & $e_6$ & $-e_5$ & $-e_3$ \\\hline
$e_2$ & $e_2$ & $-e_4$ & $-e_0$ & $e_5$ & $e_1$ & $-e_3$ & $e_7$ & $-e_6$ \\\hline
$e_3$ & $e_3$ & $-e_7$ & $-e_5$ & $-e_0$ & $e_6$ & $e_2$ & $-e_4$ & $e_1$ \\\hline
$e_4$ & $e_4$ & $e_2$ & $-e_1$ & $-e_6$ & $-e_0$ & $e_7$ & $e_3$ & $-e_5$ \\\hline
$e_5$ & $e_5$ & $-e_6$ & $e_3$ & $-e_2$ & $-e_7$ & $-e_0$ & $e_1$ & $e_4$ \\\hline
$e_6$ & $e_6$ & $e_5$ & $-e_7$ & $e_4$ & $-e_3$ & $-e_1$ & $-e_0$ & $e_2$ \\\hline
$e_7$ & $e_7$ & $e_3$ & $e_6$ & $-e_1$ & $e_5$ & $-e_4$ & $-e_2$ & $-e_0$ \\
\end{tabular}
\end{center}
The reader is referred to \cite{baez,gt,okubo,sv} for discussion about the roles of the octonions in exceptional Lie groups, projective geometry, physics, and other applications.

One can check that there is an algebra automorphism $\alpha \colon \oct \to \oct$ given by
\begin{equation}
\label{octaut}
\begin{split}
\alpha(e_0) = e_0,\quad \alpha(e_1) = e_5,\quad \alpha(e_2) = e_6,\quad \alpha(e_3) = e_7,\\
\alpha(e_4) = e_1,\quad \alpha(e_5) = e_2,\quad \alpha(e_6) = e_3,\quad \alpha(e_7) = e_4.
\end{split}
\end{equation}
There is a more conceptual description of this algebra automorphism on $\oct$.  Note that $e_1$ and $e_2$ anti-commute, and $e_3$ anti-commutes with $e_1$, $e_2$, and $e_1e_2 = e_4$. Such a triple $(e_1,e_2,e_3)$ is called a \textbf{basic triple} in \cite{baez}.  Another basic triple is $(e_5,e_6,e_7)$.  Then $\alpha$ is the unique automorphism on $\oct$ that sends the basic triple $(e_1,e_2,e_3)$ to the basic triple $(e_5,e_6,e_7)$.

Using \cite{mak} (Theorem 2.1), which is the analogue of Theorem \ref{thm:maltsevtp1} for Hom-alternative algebras, we obtain a Hom-alternative (hence Hom-Maltsev-admissible) algebra
\[
\octalpha = (\oct,\mualpha = \alpha\circ\mu,\alpha)
\]
with the following multiplication table.
\begin{center}
\begin{tabular}{c|c|c|c|c|c|c|c|c}
$\mualpha$ & $e_0$ & $e_1$ & $e_2$ & $e_3$ & $e_4$ & $e_5$ & $e_6$ & $e_7$ \\\hline
$e_0$ & $e_0$ & $e_5$ & $e_6$ & $e_7$ & $e_1$ & $e_2$ & $e_3$ & $e_4$\\\hline
$e_1$ & $e_5$ & $-e_0$ & $e_1$ & $e_4$ & $-e_6$ & $e_3$ & $-e_2$ & $-e_7$\\\hline
$e_2$ & $e_6$ & $-e_1$ & $-e_0$ & $e_2$ & $e_5$ & $-e_7$ & $e_4$ & $-e_3$\\\hline
$e_3$ & $e_7$ & $-e_4$ & $-e_2$ & $-e_0$ & $e_3$ & $e_6$ & $-e_1$ & $e_5$\\\hline
$e_4$ & $e_1$ & $e_6$ & $-e_5$ & $-e_3$ & $-e_0$ & $e_4$ & $e_7$ & $-e_2$\\\hline
$e_5$ & $e_2$ & $-e_3$ & $e_7$ & $-e_6$ & $-e_4$ & $-e_0$ & $e_5$ & $e_1$\\\hline
$e_6$ & $e_3$ & $e_2$ & $-e_4$ & $e_1$ & $-e_7$ & $-e_5$ & $-e_0$ & $e_6$\\\hline
$e_7$ & $e_4$ & $e_7$ & $e_3$ & $-e_5$ & $e_2$ & $-e_1$ & $-e_6$ & $-e_0$\\
\end{tabular}
\end{center}
Note that $\octalpha$ is not alternative because
\[
\mualpha(\mualpha(e_0,e_0),e_1) = e_5 \not= e_2 = \mualpha(e_0,\mualpha(e_0,e_1)).
\]
Since $\octalpha$ is Hom-alternative, by Theorem \ref{thm:homaltmaltsev} it is also Hom-Maltsev-admissible, i.e., its commutator Hom-algebra
\[
\octalphaminus = (\oct,[-,-]_\alpha,\alpha),
\]
where $[-,-]_\alpha = \mualpha \circ (Id - \tau) = \alpha\circ\mu\circ(Id - \tau)$, is a Hom-Maltsev algebra.

Observe that $\octalpha$ is not Hom-Lie-admissible, i.e., $\octalphaminus$ is not a Hom-Lie algebra.  Indeed, with $\octminus$ denoting the Maltsev algebra $(\oct,[-,-]=\mu\circ(Id - \tau))$, we have
\begin{equation}
\label{jo}
\begin{split}
J_{\octalphaminus} &= [-,-]_\alpha \circ ([-,-]_\alpha \otimes \alpha) \circ (Id + \sigma + \sigma^2)\\
&= \alpha^2 \circ [-,-] \circ ([-,-] \otimes Id) \circ (Id + \sigma + \sigma^2)\\
&= \alpha^2 \circ J_{\octminus}\\
&= 6\alpha^2\circ as_{\oct}
\end{split}
\end{equation}
by Proposition \ref{prop2:homalt}.  Here we are regarding $\oct$ as the Hom-alternative algebra $(\oct,\mu,Id)$ with identity twisting map.  Since $as_{\oct} \not= 0$ (because $\oct$ is not associative) and $\alpha$ is an automorphism, it follows that $J_{\octalphaminus} \not= 0$.  For example, we have
\[
as_{\oct}(e_5,e_6,e_7) = -2e_3 \not= 0
\]
and
\[
J_{\octalphaminus}(e_5,e_6,e_7) = -12\alpha^2(e_3) = -12e_4 \not= 0.
\]
Therefore, $\octalpha$ is a Hom-alternative (and hence Hom-Maltsev-admissible) algebra that is neither alternative nor Hom-Lie-admissible.

Also, $(\oct,[-,-]_\alpha)$ is not a Maltsev algebra.  Indeed, let $J'$ denote the usual Jacobian in $(\oct,[-,-]_\alpha)$ as in \eqref{j'}.  Then we have
\[
\begin{split}
[J'(e_5,e_6,e_7),e_5]_\alpha &= [[[e_5,e_6]_\alpha,e_7]_\alpha,e_5]_\alpha + [[[e_7,e_5]_\alpha,e_6]_\alpha,e_5]_\alpha + [[[e_6,e_7]_\alpha,e_5]_\alpha,e_5]_\alpha\\
&= 8(e_3 - e_7)
\end{split}
\]
and
\[
\begin{split}
J'(e_5,e_6,[e_5,e_7]_\alpha) &= 2J'(e_5,e_6,e_1)\\
&= -8(e_1+e_3+e_7).
\end{split}
\]
So $(\oct,[-,-]_\alpha)$ does not satisfy the Maltsev identity \eqref{maltsev}.

Finally, observe that as long as $\beta \colon \oct \to \oct$ is an algebra automorphism, \eqref{jo} implies that $\oct_\beta^-$ is a Hom-Maltsev algebra that is not Hom-Lie.  There are plenty of algebra automorphisms on $\oct$ other than the one in \eqref{octaut}.  In fact, the automorphism group of $\oct$ is the $14$-dimensional exceptional Lie group $G_2$ \cite{baez,cartan}.
\qed
\end{example}

\section{Hom-Maltsev-admissible algebras}
\label{sec:hommaltsevad}

In Theorem \ref{thm:homaltmaltsev} we showed that every Hom-alternative algebra is Hom-Maltsev-admissible, i.e., its commutator Hom-algebra is a Hom-Maltsev algebra.  Since Hom-alternative algebras are always Hom-flexible (Definition \ref{def:homalt}), we know that Hom-alternative algebras are Hom-flexible, Hom-Maltsev-admissible algebras.  The purpose of this section is to study the (strictly larger) class of Hom-flexible, Hom-Maltsev-admissible algebras.  We give several characterizations of Hom-flexible algebras that are Hom-Maltsev-admissible in terms of the cyclic Hom-associator (Proposition \ref{prop:hmad}).  Then we prove the analogues of the construction results, Theorems  \ref{thm:maltsevtp2} and \ref{thm:maltsevtp1}, for Hom-flexible and Hom-Maltsev-admissible algebras (Theorems \ref{thm:homf} and \ref{thm:hommalad}).  We then consider examples of Hom-flexible, Hom-Maltsev-admissible algebras that are neither Hom-alternative nor Hom-Lie-admissible.

To state our characterizations of Hom-flexible algebras that are Hom-Maltsev-admissible, we need the following definition.

\begin{definition}
\label{def:S}
Let $(A,\mu,\alpha)$ be a Hom-algebra.  Define the \textbf{cyclic Hom-associator} $S_A \colon A^{\otimes 3} \to A$ as the multi-linear map
\[
S_A = as_A \circ (Id + \sigma + \sigma^2),
\]
where $as_A$ is the Hom-associator \eqref{homassociator} and $\sigma(x \otimes y \otimes z) = z \otimes x \otimes y$.
\end{definition}

We will use the following preliminary observations about the relationship between the cyclic Hom-associator and the Hom-Jacobian \eqref{homjacobian} of the commutator Hom-algebra (Definition \ref{def:commutatoralgebra}).

\begin{lemma}
\label{lem:S}
Let $(A,\mu,\alpha)$ be a Hom-flexible algebra.  Then we have
\begin{equation}
\label{SJ2}
2S_A = J_{A^-},
\end{equation}
where $A^- = (A,[-,-],\alpha)$ is the commutator Hom-algebra.
\end{lemma}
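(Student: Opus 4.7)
The plan is to reduce the claim to Lemma \ref{lem:hahj} combined with the linearized form of Hom-flexibility. Recall that Lemma \ref{lem:hahj} gives, for any Hom-algebra,
\[
J_{A^-} = as_A \circ (Id + \sigma + \sigma^2) \circ (Id - \delta) = S_A - as_A \circ (Id + \sigma + \sigma^2) \circ \delta,
\]
so the entire problem becomes showing that, under Hom-flexibility, the second term equals $-S_A$.

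The key input is that Hom-flexibility $as_A(x,y,x)=0$ linearizes (by replacing $x$ with $x+z$) to the identity $as_A(x,y,z) = -as_A(z,y,x)$. Equivalently, if $\delta'$ denotes the transposition swapping the first and third slots, then $as_A \circ \delta' = -as_A$. So I would expand $as_A \circ (Id + \sigma + \sigma^2) \circ \delta$ evaluated on $(x,y,z)$ as
\[
as_A(x,z,y) + as_A(y,x,z) + as_A(z,y,x),
\]
then apply Hom-flexibility to each summand: $as_A(x,z,y) = -as_A(y,z,x)$, $as_A(y,x,z) = -as_A(z,x,y)$, and $as_A(z,y,x) = -as_A(x,y,z)$. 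Summing yields $-S_A(x,y,z)$.

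Combining this with the formula from Lemma \ref{lem:hahj} gives $J_{A^-} = S_A - (-S_A) = 2 S_A$, as required.

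I do not expect any genuine obstacle here: the proof is essentially a short symmetry computation, and the only nontrivial ingredient beyond Lemma \ref{lem:hahj} is the linearization of the Hom-flexibility identity, which is standard in characteristic zero. The only mild care needed is to track the cyclic permutation $\sigma$ (which sends $(x,y,z)$ to $(z,x,y)$) correctly when composing with $\delta$, so that the three terms produced really do match the three cyclic permutations of $(x,y,z)$ after applying the reversal identity.
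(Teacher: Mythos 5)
Your proof is correct and follows essentially the same route as the paper: both rest on Lemma \ref{lem:hahj} together with the linearized Hom-flexibility identity $as_A(x,y,z) = -as_A(z,y,x)$, which in both cases is used to show that the cyclic Hom-associator changes sign under the transposition $\delta$ of the last two slots. The only cosmetic difference is that the paper manipulates the whole cyclic sum at once ($S_A = -S_A\circ\delta$, hence $2S_A = S_A\circ(Id-\delta) = J_{A^-}$), while you verify the same cancellation term by term.
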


\begin{proof}
Let $\cyclicsum$ denote the cyclic sum $(Id + \sigma + \sigma^2)$.  We have:
\begin{equation}
\label{sadelta}
\begin{split}
S_A(x,y,z) &= \cyclicsum as_A(x,y,z)\\
&= - \cyclicsum as_A(z,y,x) \quad\text{(by Hom-flexibility)}\\
&= - \cyclicsum as_A(x,z,y)\\
&= -S_A(x,z,y).
\end{split}
\end{equation}
Let $\delta$ denote the permutation $\delta(x \otimes y \otimes z) = x \otimes z \otimes y$.  Then \eqref{sadelta} is equivalent to
\[
S_A = -S_A\circ\delta.
\]
This implies that
\[
\begin{split}
2S_A &= S_A \circ (Id - \delta)\\
&= as_A \circ (Id + \sigma + \sigma^2) \circ (Id - \delta)\\
&= J_{A^-},
\end{split}
\]
where the last equality is by Lemma \ref{lem:hahj}.
\end{proof}

The following result gives characterizations of Hom-Maltsev-admissible algebras in terms of the cyclic Hom-associator, assuming Hom-flexibility.  The condition \eqref{sa} below is the Hom-type analogue of \cite{myung} (Lemma 1.2(ii)).

\begin{proposition}
\label{prop:hmad}
Let $(A,\mu,\alpha)$ be a Hom-flexible algebra and $A^- = (A,[-,-],\alpha)$ be its commutator Hom-algebra.  Then the following statements are equivalent:
\begin{enumerate}
\item
$A$ is Hom-Maltsev-admissible (Definition \ref{def:commutatoralgebra}).
\item
The equality
\begin{equation}
\label{jaminus}
J_{A^-}(\alpha(x),\alpha(y),[x,z]) = [J_{A^-}(x,y,z),\alpha^2(x)]
\end{equation}
holds for all $x,y,z \in A$.
\item
The equality
\begin{equation}
\label{sa}
S_A(\alpha(x),\alpha(y),[x,z]) = [S_A(x,y,z),\alpha^2(x)]
\end{equation}
holds for all $x,y,z \in A$.
\item
The equality
\begin{equation}
\label{sa'}
\begin{split}
S_A(\alpha(w),\alpha(y),[x,z]) &+ S_A(\alpha(x),\alpha(y),[w,z])\\
&= [S_A(w,y,z),\alpha^2(x)] + [S_A(x,y,z),\alpha^2(w)]
\end{split}
\end{equation}
holds for all $w,x,y,z \in A$.
\end{enumerate}
\end{proposition}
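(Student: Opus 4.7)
The proof is essentially a matter of reducing everything to the Hom-Maltsev identity via the two observations already in hand: the definition of Hom-Maltsev-admissibility and Lemma \ref{lem:S}. Let me lay out the chain of equivalences.

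The plan is to prove $(1) \Leftrightarrow (2) \Leftrightarrow (3) \Leftrightarrow (4)$. For $(1) \Leftrightarrow (2)$, I would simply unwind the definitions. By Definition \ref{def:commutatoralgebra}, $A$ is Hom-Maltsev-admissible iff $A^- = (A,[-,-],\alpha)$ is a Hom-Maltsev algebra. The commutator bracket $[-,-] = \mu\circ(Id - \tau)$ is manifestly anti-symmetric, and $\alpha$ is multiplicative with respect to $[-,-]$ because it is multiplicative with respect to $\mu$. So the only remaining condition for $A^-$ to be Hom-Maltsev is the Hom-Maltsev identity \eqref{hommaltsevid} applied to $A^-$, which is exactly the statement \eqref{jaminus}.

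For $(2) \Leftrightarrow (3)$, I would invoke Lemma \ref{lem:S}, which gives $J_{A^-} = 2S_A$ under the Hom-flexibility hypothesis. Both sides of \eqref{jaminus} are obtained from the corresponding sides of \eqref{sa} by multiplication by $2$ (using the linearity of the bracket in the second slot on the right-hand side), so the two identities are equivalent since $\bk$ has characteristic $0$.

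For $(3) \Leftrightarrow (4)$, the argument is pure linearization, exactly as in the proof of the equivalence between \eqref{hommaltsevid} and \eqref{hommaltsevid2} in Proposition \ref{prop:hommaltsevid}: to obtain \eqref{sa'} from \eqref{sa}, replace $x$ by $w+x$ and use multi-linearity of $S_A$, $[-,-]$, and $\alpha$, then subtract the copies of \eqref{sa} at $x$ and at $w$; conversely, setting $w=x$ in \eqref{sa'} and dividing by $2$ recovers \eqref{sa}. No step here presents any real obstacle, since the hard identity $2S_A = J_{A^-}$ has already been established in Lemma \ref{lem:S}; the present proposition is essentially a repackaging of that lemma together with a standard polarization argument.
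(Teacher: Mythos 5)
Your proposal is correct and follows essentially the same route as the paper: statements (1) and (2) are equivalent by unwinding the definition of Hom-Maltsev-admissibility, (2) and (3) are equivalent via Lemma \ref{lem:S} (that is, $2S_A = J_{A^-}$ under Hom-flexibility), and (3) and (4) are equivalent by the standard linearization/polarization argument. No gaps.
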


\begin{proof}
The equivalence of the first two statements is immediate, since the commutator bracket $[-,-] = \mu \circ (Id - \tau)$ is anti-symmetric and \eqref{jaminus} is the Hom-Maltsev identity \eqref{hommaltsevid} for $A^-$.  The equivalence of \eqref{jaminus} and \eqref{sa} follows from \eqref{SJ2}, which uses the Hom-flexibility of $A$.  Finally, that \eqref{sa} is equivalent to \eqref{sa'} follows from linearization.  In other words, starting from \eqref{sa}, one replaces $x$ by $w+x$ to obtain \eqref{sa'}.  Conversely, starting from \eqref{sa'}, one sets $w=x$ to obtain \eqref{sa}.
\end{proof}

The following construction results for Hom-flexible and Hom-Maltsev-admissible algebras are the analogues of Theorems  \ref{thm:maltsevtp2} and \ref{thm:maltsevtp1}.

\begin{theorem}
\label{thm:homf}
\begin{enumerate}
\item
Let $(A,\mu)$ be a flexible algebra (i.e., $(xy)x = x(yx)$ for all $x,y \in A$) and $\alpha \colon A \to A$ be an algebra morphism.  Then the induced Hom-algebra $A_\alpha = (A,\mualpha=\alpha\circ\mu,\alpha)$ is a Hom-flexible algebra.
\item
Let $(A,\mu,\alpha)$ be a Hom-flexible algebra.  Then the derived Hom-algebra $A^n = (A,\mun,\alpha^{2^n})$ is also a Hom-flexible algebra for each $n \geq 0$, where $\mun = \alpha^{2^n-1}\circ\mu$.
\end{enumerate}
\end{theorem}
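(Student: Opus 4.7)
The plan for part (1) is to establish the single identity $as_{A_\alpha} = \alpha^2 \circ as_{(A,\mu)}$, where $as_{(A,\mu)} = \mu \circ (\mu \otimes Id - Id \otimes \mu)$ denotes the ordinary associator (i.e., the Hom-associator of $(A,\mu,Id)$, cf.\ Remark \ref{rk:classicalalgebra}). Starting from $as_{A_\alpha} = \mualpha \circ (\mualpha \otimes \alpha - \alpha \otimes \mualpha)$ and substituting $\mualpha = \alpha \circ \mu$, I would factor out the outer $\alpha$ coming from the leftmost $\mualpha$, then pull the remaining inner $\alpha$'s past $\mu$ using the multiplicativity identity $\mu \circ \alpha^{\otimes 2} = \alpha \circ \mu$. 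Collecting the factors yields $as_{A_\alpha}(x,y,z) = \alpha^2(as_{(A,\mu)}(x,y,z))$. Flexibility of $(A,\mu)$ means $as_{(A,\mu)}(x,y,x) = 0$, so $as_{A_\alpha}(x,y,x) = \alpha^2(0) = 0$, which is Hom-flexibility of $A_\alpha$.

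For part (2), the plan is to prove the identity
\[
as_{A^n} = \alpha^{2(2^n-1)} \circ as_A
\]
for all $n \geq 0$, where $as_A$ is the Hom-associator of $(A,\mu,\alpha)$. The essential computation \eqref{munalpha2n} in the proof of Lemma \ref{lem:jalpha} already gives $\mun \circ (\mun \otimes \alpha^{2^n}) = \alpha^{2(2^n-1)} \circ \mu \circ (\mu \otimes \alpha)$; the same three-line manipulation, with $\alpha^{2^n}$ and $\mun$ sitting in the opposite tensor slots, yields $\mun \circ (\alpha^{2^n} \otimes \mun) = \alpha^{2(2^n-1)} \circ \mu \circ (\alpha \otimes \mu)$. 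Subtracting the two produces the displayed identity. Hom-flexibility of $A$ then forces $as_{A^n}(x,y,x) = \alpha^{2(2^n-1)}(as_A(x,y,x)) = 0$, so $A^n$ is Hom-flexible. Alternatively, since $A^0 = A$ and $A^{n+1} = (A^n)^1$, an easy induction on $n$ reduces matters to the single case $n = 1$, which is a special case of the same computation.

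There is no real obstacle beyond the algebraic bookkeeping: both parts reduce to the observation that twisting by $\alpha$ or passing to a derived Hom-algebra produces a new Hom-associator that is merely a non-negative power of $\alpha$ applied to the original Hom-associator, and any pointwise vanishing condition on $as$ — flexibility in particular — is automatically preserved. The only minor care is the convention from Remark \ref{rk:classicalalgebra}, used in part (1), that $(A,\mu)$ is regarded as a Hom-algebra with identity twisting map so that its Hom-associator coincides with its ordinary associator.
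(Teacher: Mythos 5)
Your proposal is correct and follows essentially the same route as the paper: both parts rest on the identities $as_{A_\alpha} = \alpha^2 \circ as_A$ and $as_{A^n} = \alpha^{2(2^n-1)} \circ as_A$ (equations \eqref{associatoraalpha} and \eqref{associatoran} in the text), obtained from the multiplicativity of $\alpha$, after which Hom-flexibility is immediate.
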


\begin{proof}
For the first assertion, for \emph{any} algebra $(A,\mu)$, we regard it as the Hom-algebra $(A,\mu,Id)$ with identity twisting map.  Then we have:
\begin{equation}
\label{associatoraalpha}
\begin{split}
as_{A_\alpha} &= \mualpha\circ(\mualpha\otimes\alpha - \alpha\otimes\mualpha)\\
&= \alpha^2 \circ \mu \circ (\mu \otimes Id - Id \otimes \mu)\quad\text{(by multiplicativity of $\alpha$)}\\
&= \alpha^2 \circ as_A.
\end{split}
\end{equation}
Now for a flexible algebra $(A,\mu)$, this implies that
\[
as_{A_\alpha}(x,y,x) = \alpha^2(as_A(x,y,x)) = 0,
\]
so $A_\alpha$ is Hom-flexible.

For the second assertion, we have that for \emph{any} Hom-algebra $(A,\mu,\alpha)$:
\begin{equation}
\label{associatoran}
\begin{split}
as_{A^n} &= \mun \circ (\mun \otimes \alpha^{2^n} - \alpha^{2^n} \otimes \mun)\\
&= \alpha^{2(2^n-1)} \circ \mu \circ (\mu \otimes \alpha - \alpha \otimes \mu)\quad\text{(by multiplicativity of $\alpha$)}\\
&= \alpha^{2(2^n-1)} \circ as_A.
\end{split}
\end{equation}
Now for a Hom-flexible algebra $(A,\mu,\alpha)$, this implies that
\[
as_{A^n}(x,y,x) = \alpha^{2(2^n-1)}(as_A(x,y,x)) = 0,
\]
so $A^n$ is Hom-flexible.
\end{proof}

\begin{theorem}
\label{thm:hommalad}
\begin{enumerate}
\item
Let $(A,\mu)$ be a Maltsev-admissible algebra and $\alpha \colon A \to A$ be an algebra morphism.  Then the induced Hom-algebra $A_\alpha = (A,\mualpha = \alpha\circ\mu,\alpha)$ is a Hom-Maltsev-admissible algebra.
\item
Let $(A,\mu,\alpha)$ be a Hom-Maltsev-admissible algebra.  Then the derived Hom-algebra $A^n = (A,\mun,\alpha^{2^n})$ is also a Hom-Maltsev-admissible algebra for each $n \geq 0$, where $\mun = \alpha^{2^n-1}\circ\mu$.
\end{enumerate}
\end{theorem}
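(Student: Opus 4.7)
The plan is to reduce both parts of Theorem \ref{thm:hommalad} to the corresponding construction results for Hom-Maltsev algebras, namely Theorems \ref{thm:maltsevtp1} and \ref{thm:maltsevtp2}. The central observation is that the commutator bracket commutes with the two twisting operations involved, so that the commutator Hom-algebra of a twist equals the twist of the commutator Hom-algebra. Once this is verified the assertions become immediate.

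For part (1), first I would check that if $\alpha \colon A \to A$ is an algebra morphism of $(A,\mu)$, then $\alpha$ is also an algebra morphism of the commutator algebra $A^- = (A,[-,-] = \mu\circ(Id-\tau))$. This is the one-line computation
\[
\alpha\circ[-,-] = \alpha\circ\mu\circ(Id - \tau) = \mu\circ\alpha^{\otimes 2}\circ(Id - \tau) = \mu\circ(Id-\tau)\circ\alpha^{\otimes 2} = [-,-]\circ\alpha^{\otimes 2},
\]
using multiplicativity of $\alpha$ with respect to $\mu$ and the equivariance of $\tau$. Next, by direct inspection of the definitions, the commutator Hom-algebra of the induced Hom-algebra $A_\alpha$ equals the Hom-algebra induced by $\alpha$ on $A^-$:
\[
(A_\alpha)^- = (A,\, \mu_\alpha\circ(Id-\tau),\, \alpha) = (A,\, \alpha\circ[-,-],\, \alpha) = (A^-)_\alpha.
\]
Since $A$ is Maltsev-admissible, $A^-$ is a Maltsev algebra, and Theorem \ref{thm:maltsevtp1} then tells us that $(A^-)_\alpha$ is a Hom-Maltsev algebra. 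Thus $(A_\alpha)^-$ is a Hom-Maltsev algebra, i.e., $A_\alpha$ is Hom-Maltsev-admissible.

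For part (2), the analogous identity is that the commutator Hom-algebra of the $n$th derived Hom-algebra coincides with the $n$th derived Hom-algebra of the commutator Hom-algebra. Explicitly,
\[
(A^n)^- = (A,\, \mu^{(n)}\circ(Id-\tau),\, \alpha^{2^n}) = (A,\, \alpha^{2^n-1}\circ[-,-],\, \alpha^{2^n}) = (A^-)^n,
\]
where the middle equality uses $\mu^{(n)} = \alpha^{2^n-1}\circ\mu$ together with the fact that $\alpha^{2^n-1}$ is linear and hence commutes with $(Id-\tau)$. Since $A$ is Hom-Maltsev-admissible, $A^-$ is a Hom-Maltsev algebra, and Theorem \ref{thm:maltsevtp2} yields that $(A^-)^n$ is a Hom-Maltsev algebra. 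Therefore $(A^n)^-$ is a Hom-Maltsev algebra, i.e., $A^n$ is Hom-Maltsev-admissible.

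There is no real obstacle here: both parts are essentially the naturality statement that the commutator construction is compatible with the two twisting procedures. The only thing to be mindful of is correctly tracking the powers of $\alpha$ in the derived Hom-algebra and confirming that $\alpha$ stays an algebra morphism of the commutator algebra in part (1); both are routine. Once these bookkeeping identities are in place, the theorem is an immediate consequence of the previously proved Theorems \ref{thm:maltsevtp1} and \ref{thm:maltsevtp2}.
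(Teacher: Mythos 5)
Your proof is correct. The structural identities you isolate, namely $(A_\alpha)^- = (A^-)_\alpha$ and $(A^n)^- = (A^-)^n$, are exactly the ones the paper records (as its equations for $[-,-]_\alpha = \alpha\circ[-,-]$ and $(A^n)^- = (A^-)^n$), and your verification that $\alpha$ remains an algebra morphism of the commutator algebra is the one missing hypothesis-check needed to make the reduction legitimate. Where you differ from the paper is in what you do after establishing these identities: you simply invoke Theorems \ref{thm:maltsevtp1} and \ref{thm:maltsevtp2} as black boxes applied to $A^-$, whereas the paper re-derives the Hom-Maltsev identity for $(A_\alpha)^-$ and $(A^n)^-$ by explicit computation, using the auxiliary formulas $J_{A_\alpha^-} = \alpha^2\circ J_{A^-}$ and $J_{(A^n)^-} = \alpha^{2(2^n-1)}\circ J_{A^-}$ together with \eqref{jalpha} and the (Hom-)Maltsev identity in $A^-$. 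Your route is shorter and makes the naturality of the commutator construction with respect to both twisting procedures explicit; the paper's route is self-contained and does not rely on the reader tracking that the earlier theorems apply verbatim to the anti-symmetric bracket $[-,-]$ rather than to $\mu$. Both are valid; yours is arguably the cleaner packaging of the same underlying computation.
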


\begin{proof}
For the first assertion, the commutator algebra of $(A,\mu)$ is $A^- = (A,[-,-] = \mu\circ(Id - \tau))$, which is a Maltsev algebra by assumption.  In particular, the Maltsev identity
\begin{equation}
\label{maltsevid}
J_{A^-}(x,y,[x,z]) = [J_{A^-}(x,y,z),x]
\end{equation}
holds.  The commutator Hom-algebra of the induced Hom-algebra $A_\alpha$ is $A_\alpha^- = (A,[-,-]_\alpha,\alpha)$, where
\begin{equation}
\label{bracketalpha}
[-,-]_\alpha = \mualpha\circ(Id-\tau) = \alpha \circ [-,-],
\end{equation}
which is anti-symmetric.  We must show that $A_\alpha^-$ satisfies the Hom-Maltsev identity \eqref{hommaltsevid}.  We have:
\begin{equation}
\label{jaalphaminus}
\begin{split}
J_{A_\alpha^-} &= [-,-]_\alpha \circ ([-,-]_\alpha \otimes \alpha) \circ (Id + \sigma + \sigma^2)\\
&= \alpha^2 \circ [-,-] \circ ([-,-] \otimes Id) \circ (Id + \sigma + \sigma^2) \quad\text{(by \eqref{bracketalpha})}\\
&= \alpha^2 \circ J_{A^-}.
\end{split}
\end{equation}
Therefore, we have:
\[
\begin{split}
J_{A_\alpha^-}(&\alpha(x),\alpha(y),[x,z]_\alpha) - [J_{A_\alpha^-}(x,y,z),\alpha^2(x)]_\alpha\\
&= \alpha^2\{J_{A^-}(\alpha(x),\alpha(y),\alpha[x,z])\} - \alpha[\alpha^2(J_{A^-}(x,y,z)),\alpha^2(z)]\quad\text{(by \eqref{jaalphaminus})}\\
&= \alpha^3\left\{J_{A^-}(x,y,[x,z]) - [J_{A^-}(x,y,z),x]\right\}\quad\text{(by \eqref{alphajj})}\\
&= 0\quad\text{(by \eqref{maltsevid})}.
\end{split}
\]
This shows that the Hom-Maltsev identity holds in $A_\alpha^-$, proving the first assertion.

For the second assertion, assume that $(A,\mu,\alpha)$ is a Hom-Maltsev-admissible algebra.  Note that the commutator Hom-algebra of the $n$th derived Hom-algebra $A^n$ is $(A^n)^- = (A,[-,-]^{(n)},\alpha^{2^n})$,
where
\[
[-,-]^{(n)} = \mun\circ(Id-\tau) = \alpha^{2^n-1} \circ [-,-].
\]
Thus, we have
\begin{equation}
\label{anminus}
(A^n)^- = (A^-)^n,
\end{equation}
where $A^- = (A,[-,-],\alpha)$ is the commutator Hom-algebra of $A$ and $(A^-)^n$ is its $n$th derived Hom-algebra (Definition \ref{def:derivedhomalge}).  Since $[-,-]^{(n)}$ is anti-symmetric, we must show that $(A^n)^-$ satisfies the Hom-Maltsev identity \eqref{hommaltsevid}.  To do that, observe that
\begin{equation}
\label{janminus}
\begin{split}
J_{(A^n)^-} &= J_{(A^-)^n} \quad\text{(by \eqref{anminus})}\\
&= \alpha^{2(2^n-1)}\circ J_{A^-}\quad\text{(by \eqref{jan})}.
\end{split}
\end{equation}
In the computation below, we write $k = 3(2^n-1)$.  Using \eqref{jalpha} and \eqref{janminus} we compute as follows:
\[
\begin{split}
J_{(A^n)^-}(&\alpha^{2^n}(x),\alpha^{2^n}(y),[x,z]^{(n)}) - [J_{(A^n)^-}(x,y,z),(\alpha^{2^n})^2(x)]^{(n)}\\
&= \alpha^{2(2^n-1)} \left\{J_{A^-}(\alpha^{2^n}(x),\alpha^{2^n}(y),\alpha^{2^n-1}[x,z])\right\}\\
&\relphantom{} - \alpha^{2^n-1}[\alpha^{2(2^n-1)}\circ J_{A^-}(x,y,z),\alpha^{2^{n+1}}(x)]\\
&= \alpha^k\left\{J_{A^-}(\alpha(x),\alpha(y),[x,z]) - [J_{A^-}(x,y,z),\alpha^2(x)]\right\}\\
&= 0.
\end{split}
\]
This last equality follows from the Hom-Maltsev identity \eqref{hommaltsevid} in $A^-$.  We have shown that $(A^n)^-$ satisfies the Hom-Maltsev identity, so $A^n$ is Hom-Maltsev-admissible.
\end{proof}


Below we consider examples of Hom-flexible, Hom-Maltsev-admissible algebras that are not Hom-alternative, not Hom-Lie-admissible, and not Maltsev-admissible.  In particular, these Hom-Maltsev-admissible algebras cannot be obtained from Theorem \ref{thm:homaltmaltsev}.  Therefore, the class of Hom-flexible, Hom-Maltsev-admissible algebras is strictly larger than the class of Hom-alternative algebras.

\begin{example}
\label{ex:hma5}
There is a five-dimensional flexible, Maltsev-admissible algebra $(A,\mu)$ (\cite{myung} Example 1.5, p.29) with basis $\{e_1,\ldots,e_5\}$ and multiplication table:
\begin{center}
\begin{tabular}{c|ccccc}
$\mu$ & $e_1$ & $e_2$ & $e_3$ & $e_4$ & $e_5$\\\hline
$e_1$ & $0$ & $e_5 + \frac{1}{2}e_4$ & $0$ & $\frac{1}{2}e_1$ & $0$\\
$e_2$ & $e_5 - \frac{1}{2}e_4$ & $0$ & $0$ & $-\frac{1}{2}e_2$ & $0$\\
$e_3$ & $0$ & $0$ & $0$ & $\frac{1}{2}e_3$ & $0$\\
$e_4$ & $-\frac{1}{2}e_1$ & $\frac{1}{2}e_2$ & $-\frac{1}{2}e_3$ & $-e_5$ & $0$\\
$e_5$ & $0$ & $0$ & $0$ &$0$ & $0$
\end{tabular}
\end{center}
This Maltsev-admissible algebra $A$ is neither alternative nor Lie-admissible.

Let $\lambda, \xi \in \bk$ be arbitrary scalars with $\lambda \not\in \{0,\pm 1\}$.  There is an algebra morphism $\alpha \colon A \to A$ given by
\[
\begin{split}
\alpha(e_1) &= \lambda e_1,\quad \alpha(e_2) = \lambda^{-1}e_2,\quad \alpha(e_3) = \xi e_3,\\
\alpha(e_4) &= e_4,\quad \alpha(e_5) = e_5.
\end{split}
\]
By Theorems \ref{thm:homf} and \ref{thm:hommalad} the induced Hom-algebra $A_\alpha = (A,\mualpha=\alpha\circ\mu,\alpha)$ is a Hom-flexible, Hom-Maltsev-admissible algebra.  Its multiplication table is:
\begin{center}
\begin{tabular}{c|ccccc}
$\mualpha$ & $e_1$ & $e_2$ & $e_3$ & $e_4$ & $e_5$\\\hline
$e_1$ & $0$ & $e_5 + \frac{1}{2}e_4$ & $0$ & $\frac{\lambda}{2}e_1$ & $0$\\
$e_2$ & $e_5 - \frac{1}{2}e_4$ & $0$ & $0$ & $-\frac{\lambda^{-1}}{2}e_2$ & $0$\\
$e_3$ & $0$ & $0$ & $0$ & $\frac{\xi}{2}e_3$ & $0$\\
$e_4$ & $-\frac{\lambda}{2}e_1$ & $\frac{\lambda^{-1}}{2}e_2$ & $-\frac{\xi}{2}e_3$ & $-e_5$ & $0$\\
$e_5$ & $0$ & $0$ & $0$ &$0$ & $0$
\end{tabular}
\end{center}
Note that $A_\alpha$ is not Hom-alternative because
\[
as_{A_\alpha}(e_1,e_2,e_2) = \frac{\lambda^{-2}}{4}e_2 \not= 0.
\]
Also, $A_\alpha$ is not Hom-Lie-admissible because
\[
J_{(A_\alpha)^-}(e_1,e_2,e_3) = -b^2e_3 \not= 0.
\]
Finally, $(A,\mualpha)$ is not Maltsev-admissible, i.e., $(A,[-,-]_\alpha)$ is not a Maltsev algebra, where $[-,-]_\alpha = \mualpha \circ (Id - \tau) = \alpha \circ [-,-]$.  Indeed, let $J'$ denote the usual Jacobian of $(A,[-,-]_\alpha)$ as in \eqref{j'}.  Then, on the one hand, we have
\[
\begin{split}
J'(e_1,e_2,[e_1,e_4]_\alpha) &=
[[e_1,e_2]_\alpha,[e_1,e_4]_\alpha]_\alpha + [[[e_1,e_4]_\alpha,e_1]_\alpha,e_2]_\alpha + [[e_2,[e_1,e_4]_\alpha]_\alpha,e_1]_\alpha\\
&= [e_4,\lambda e_1]_\alpha + [[\lambda e_1,e_1]_\alpha,e_2]_\alpha + [[e_2,\lambda e_1]_\alpha,e_1]_\alpha\\
&= -\lambda^2e_1 + 0 + \lambda^2e_1\\
&= 0.
\end{split}
\]
On the other hand, we have
\[
\begin{split}
[J'(e_1,e_2,e_4),e_1]_\alpha &=
[\cyclicsum [[e_1,e_2]_\alpha,e_4]_\alpha,e_1]_\alpha\\
&= [(\lambda^{-1}-\lambda)e_4,e_1]_\alpha\\
&= (\lambda^2-1)e_1,
\end{split}
\]
which is not equal to $0$ because $\lambda \not= \pm 1$.  So $(A,[-,-]_\alpha)$ does not satisfy the Maltsev identity \eqref{maltsevidentity} and, therefore, is not a Maltsev algebra.
\qed
\end{example}

\begin{example}
\label{ex:hma6}
There is a six-dimensional flexible, Maltsev-admissible algebra $(A,\mu)$ (\cite{myung} Table 5.10, p.301) with basis $\{e,h,f,u,v,w\}$ and multiplication table:
\begin{center}
\begin{tabular}{c|cccccc}
$\mu$ & $e$ & $h$ & $f$ & $u$ & $v$ & $w$\\\hline
$e$ & $0$ & $-e$ & $\frac{1}{2}h + \lambda u$ & $0$ & $w$ & $0$\\
$h$ & $e$ & $2\lambda u$ & $-f$ & $0$ & $v$ & $-w$\\
$f$ & $-\frac{1}{2}h+\lambda u$ & $f$ & $0$ & $0$ & $0$ & $v$\\
$u$ & $0$ & $0$ & $0$ & $0$ & $0$ & $0$\\
$v$ & $-w$ & $-v$ & $0$ & $0$ & $0$ & $\frac{1}{2}u$\\
$w$ & $0$ & $w$ & $-v$ & $0$ & $-\frac{1}{2}u$ & $0$
\end{tabular}
\end{center}
In the table above, $\lambda \in \bk$ is an arbitrary but fixed scalar.  This Maltsev-admissible algebra $A$ is neither alternative nor Lie-admissible.

Let $\gamma \in \bk$ be a scalar with $\gamma \not= 0$ and $\gamma^8 \not= 1$.  Then there is an algebra morphism $\alpha \colon A \to A$ given by
\[
\begin{split}
\alpha(e) &= \gamma^{-2}e,\quad \alpha(h) = h,\quad \alpha(f) = \gamma^2f,\\
\alpha(u) &= u, \quad \alpha(v) = \gamma v, \quad \alpha(w) = \gamma^{-1}w.
\end{split}
\]
By Theorems \ref{thm:homf} and \ref{thm:hommalad} the induced Hom-algebra $A_\alpha = (A,\mualpha=\alpha\circ\mu,\alpha)$ is a Hom-flexible, Hom-Maltsev-admissible algebra.  Its multiplication table is:
\begin{center}
\begin{tabular}{c|cccccc}
$\mualpha$ & $e$ & $h$ & $f$ & $u$ & $v$ & $w$\\\hline
$e$ & $0$ & $-\gamma^{-2}e$ & $\frac{1}{2}h + \lambda u$ & $0$ & $\gamma^{-1}w$ & $0$\\
$h$ & $\gamma^{-2}e$ & $2\lambda u$ & $-\gamma^2f$ & $0$ & $\gamma v$ & $-\gamma^{-1}w$\\
$f$ & $-\frac{1}{2}h+\lambda u$ & $\gamma^2f$ & $0$ & $0$ & $0$ & $\gamma v$\\
$u$ & $0$ & $0$ & $0$ & $0$ & $0$ & $0$\\
$v$ & $-\gamma^{-1}w$ & $-\gamma v$ & $0$ & $0$ & $0$ & $\frac{1}{2}u$\\
$w$ & $0$ & $\gamma^{-1}w$ & $-\gamma v$ & $0$ & $-\frac{1}{2}u$ & $0$
\end{tabular}
\end{center}
Note that $A_\alpha$ is not Hom-alternative because
\[
as_{A_\alpha}(h,h,f) = -\gamma^4f \not= 0.
\]
Also, $A_\alpha$ is not Hom-Lie-admissible because
\[
J_{(A_\alpha)^-}(h,f,w) = -12\gamma^2v \not= 0.
\]
Finally, $(A,\mualpha)$ is not Maltsev-admissible, i.e., $(A,[-,-]_\alpha)$ is not a Maltsev algebra, where $[-,-]_\alpha = \mualpha \circ (Id - \tau) = \alpha \circ [-,-]$.  Indeed, with $J'$ denoting the usual Jacobian of $(A_\alpha)^- = (A,[-,-]_\alpha)$ as in \eqref{j'}, we have
\[
\begin{split}
J'(e,h,[e,f]_\alpha) - [J'(e,h,f),e]_\alpha
&= 4(\gamma^{-4}-1)e - 4(\gamma^4-1)e\\
&= 4(\gamma^{-4}-\gamma^4)e.
\end{split}
\]
This is not equal to $0$ because $\gamma^8 \not= 1$.  So $(A,[-,-]_\alpha)$ does not satisfy the Maltsev identity \eqref{maltsevidentity} and, therefore, is not a Maltsev algebra.
\qed
\end{example}

\begin{example}
\label{ex:hma8}
There is an eight-dimensional flexible, Maltsev-admissible algebra $(A,\mu)$ \cite{myung} (Theorems 4.11 and 5.7) with basis $\{a,e_0,e_{\pm i} \colon i = 1,2,3\}$, whose multiplication is determined by:
\[
\begin{split}
e_0e_{\pm i} &= -e_{\pm i}e_0 = \pm e_{\pm i} \quad\text{for $i=1,2,3$},\\
e_{\pm i}e_{\pm j} &= -e_{\pm j}e_{\pm i} = \pm e_{\mp k} \quad\text{for $(ijk) = (123), (312), (231)$},\\
e_ie_{-i} &= \frac{1}{2}e_0 + \gamma a,\quad e_{-i}e_i = -\frac{1}{2}e_0 + \gamma a \quad\text{for $i=1,2,3$},\\
e_0^2 &= 2\gamma a,\quad a^2 = \delta a,\\
ax &= xa = \varepsilon x  \quad\text{for $x \in \{e_0,e_{\pm i}\}_{i=1,2,3}$}.\\
\end{split}
\]
The unspecified products of the basis elements are $0$, and $\gamma$, $\delta$, $\varepsilon$ are arbitrary but fixed scalars.  This Maltsev-admissible algebra $A$ is neither alternative nor Lie-admissible.

Let $\lambda,\xi \in \bk$ be non-zero scalars.  Then there is an algebra morphism $\alpha \colon A \to A$ given by
\[
\begin{split}
\alpha(a) &= a, \quad \alpha(e_0) = e_0,\\
\alpha(e_{\pm 1}) &= \lambda^{\pm 1}e_{\pm 1},\quad
\alpha(e_{\pm 2}) = \xi^{\pm 1}e_{\pm 2},\quad
\alpha(e_{\pm 3}) = (\lambda\xi)^{\mp 1}e_{\pm 3}.
\end{split}
\]
By Theorems \ref{thm:homf} and \ref{thm:hommalad} the induced Hom-algebra $A_\alpha = (A,\mualpha=\alpha\circ\mu,\alpha)$ is a Hom-flexible, Hom-Maltsev-admissible algebra.  Note that $A_\alpha$ is not Hom-alternative because
\[
as_{A_\alpha}(e_0,e_0,a) = 2\gamma(\delta - \varepsilon)a,
\]
which is not equal to $0$ in general.  Also, $A_\alpha$ is not Hom-Lie-admissible because
\[
J_{(A_\alpha)^-}(e_0,e_1,e_2) = 12(\lambda\xi)^2e_{-3} \not= 0.
\]
Finally, $(A,\mualpha)$ is not Maltsev-admissible, i.e., $(A,[-,-]_\alpha)$ is not a Maltsev algebra, where $[-,-]_\alpha = \mualpha \circ (Id - \tau) = \alpha \circ [-,-]$.  Indeed, with $J'$ denoting the usual Jacobian of $(A,[-,-]_\alpha)$ as in \eqref{j'}, we have
\[
J'(e_0,e_1,[e_0,e_2]_\alpha) - [J'(e_0,e_1,e_2),e_0]_\alpha = 8\gamma e_{-3},
\]
where
\[
\gamma = \lambda\xi^2(\lambda + \xi - 2\lambda\xi - \lambda^2 + \lambda^2\xi),
\]
which is not equal to $0$ in general.
\qed
\end{example}

\section{Hom-alternative algebras are Hom-Jordan-admissible}
\label{sec:jordan}

In this section, we define Hom-Jordan(-admissible) algebras.  Some alternative characterizations of the Hom-Jordan identity \eqref{homjordanid} are given in Proposition \ref{prop:homjordanchar}.  The main result of this section is Theorem \ref{thm:hahj}, which says that Hom-alternative algebras are Hom-Jordan-admissible.  Then we prove Theorems \ref{thm:hjtp} and \ref{thm:hjatp}, which are construction results for Hom-Jordan and Hom-Jordan-admissible algebras.  In Example \ref{ex:m83} we construct Hom-Jordan algebras from the $27$-dimensional exceptional simple Jordan algebra of $3 \times 3$ Hermitian octonionic matrices.

Let us begin with some relevant definitions.

\begin{definition}
\label{def:plushom}
Let $(A,\mu,\alpha)$ be a Hom-algebra.  Define its \textbf{plus Hom-algebra} as the Hom-algebra $A^+ = (A,\ast,\alpha)$, where $\ast = (\mu + \mu\circ\tau)/2$.
\end{definition}

With $\mu(x,y) = xy$, the product $\ast$ is given by
\[
x \ast y = \frac{1}{2}(\mu(x,y) + \mu(y,x)) = \frac{1}{2}(xy + yx),
\]
which is commutative.  Also, we have
\begin{equation}
\label{xsquare}
x \ast x = \mu(x,x) = x^2
\end{equation}
for $x \in A$.  In other words, $\mu$ and $\ast$ have the same squares.  In what follows, we will often abbreviate $x \ast x$ to $x^2$.

\begin{definition}
\label{def:homjordan}
\begin{enumerate}
\item
A \textbf{Hom-Jordan algebra} is a Hom-algebra $(A,\mu,\alpha)$ such that $\mu = \mu \circ \tau$ (commutativity) and the \textbf{Hom-Jordan identity}
\begin{equation}
\label{homjordanid}
as_A(x^2,\alpha(y),\alpha(x)) = 0
\end{equation}
is satisfied for all $x,y \in A$, where $as_A$ is the Hom-associator \eqref{homassociator}.
\item
A \textbf{Hom-Jordan-admissible algebra} is a Hom-algebra $(A,\mu,\alpha)$ whose plus Hom-algebra $A^+ = (A,\ast,\alpha)$ is a Hom-Jordan algebra.
\end{enumerate}
\end{definition}

The Hom-Jordan identity \eqref{homjordanid} can be rewritten as
\[
\mu(\mu(x^2,\alpha(y)),\alpha^2(x)) = \mu(\alpha(x^2),\mu(\alpha(y),\alpha(x))).
\]
Since the product $\ast$ is commutative, using \eqref{xsquare} a Hom-algebra $A$ is Hom-Jordan-admissible if and only if $A^+$ satisfies the Hom-Jordan identity
\begin{equation}
\label{homjordanaplus}
as_{A^+}(x^2,\alpha(y),\alpha(x)) = 0,
\end{equation}
or equivalently
\[
(x^2 \ast \alpha(y)) \ast \alpha^2(x) = \alpha(x^2) \ast (\alpha(y) \ast \alpha(x)),
\]
for all $x,y \in A$.

\begin{example}
A Jordan(-admissible) algebra is a Hom-Jordan(-admissible) algebra with $\alpha = Id$, since the Hom-Jordan identity \eqref{homjordanid} with $\alpha = Id$ is the Jordan identity \eqref{jordan}.  The reader is referred to \cite{albert,jacobson,schafer} for discussions about structures of Jordan algebras. Other ways of constructing Hom-Jordan(-admissible) algebras are given below.\qed
\end{example}

\begin{remark}
In \cite{mak} Makhlouf defined a Hom-Jordan algebra as a commutative Hom-algebra satisfying $as_A(x^2,y,\alpha(x)) = 0$, which becomes our Hom-Jordan identity \eqref{homjordanid} if $y$ is replaced by $\alpha(y)$.  This seemingly minor difference is, in fact, very significant with respect to Hom-Jordan-admissibility of Hom-alternative algebras.  Using Makhlouf's definition of a Hom-Jordan algebra, Hom-alternative algebras are not Hom-Jordan-admissible, although Hom-associative algebras are still Hom-Jordan-admissible \cite{mak} (Theorem 3.3).
\end{remark}

Let us give some alternative characterizations of the Hom-Jordan identity \eqref{homjordanid}, including a linearized version of it \eqref{homjordanid3}.  The ordinary (non-Hom) version of the following result can be found in, e.g., \cite{schafer} (Chapter IV).

\begin{proposition}
\label{prop:homjordanchar}
Let $(A,\mu,\alpha)$ be a Hom-algebra with $\mu$ commutative, i.e., $\mu = \mu\circ\tau$.  Then the following statements are equivalent:
\begin{enumerate}
\item
$A$ is a Hom-Jordan algebra, i.e., $A$ satisfies the Hom-Jordan identity \eqref{homjordanid}.
\item
$A$ satisfies
\begin{equation}
\label{homjordanid2}
2as_A(xz,\alpha(y),\alpha(x)) + as_A(x^2,\alpha(y),\alpha(z)) = 0
\end{equation}
for all $x,y,z \in A$.
\item
$A$ satisfies
\begin{equation}
\label{homjordanid3}
as_A(zx,\alpha(y),\alpha(w)) + as_A(wz,\alpha(y),\alpha(x)) + as_A(xw,\alpha(y),\alpha(z)) = 0
\end{equation}
for all $w,x,y,z \in A$.
\end{enumerate}
\end{proposition}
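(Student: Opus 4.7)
The plan is to prove the chain of equivalences $(1) \Leftrightarrow (2) \Leftrightarrow (3)$ by a standard linearization/specialization argument, which works cleanly here because $\bk$ has characteristic zero and $\mu$ is commutative, so that $(x+z)^2 = x^2 + 2xz + z^2$. No deep identity is needed; all three conditions are polynomial in the inputs, and passing between them is a matter of collecting coefficients after a substitution.

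For $(1) \Rightarrow (2)$, I would fix $x,y,z \in A$ and a formal parameter $\lambda$, and apply the Hom-Jordan identity \eqref{homjordanid} with $x$ replaced by $x + \lambda z$. Expanding using trilinearity of $as_A$ and commutativity of $\mu$, the left-hand side becomes a polynomial in $\lambda$ whose coefficients must each vanish (char $0$). The coefficient of $\lambda^0$ is $as_A(x^2,\alpha(y),\alpha(x))$, the coefficient of $\lambda^2$ equals $as_A(z^2,\alpha(y),\alpha(x)) + 2\,as_A(xz,\alpha(y),\alpha(z))$, and the coefficient of $\lambda^1$ is exactly $2\,as_A(xz,\alpha(y),\alpha(x)) + as_A(x^2,\alpha(y),\alpha(z))$, which is \eqref{homjordanid2}. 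Conversely, $(2) \Rightarrow (1)$ by setting $z = x$ in \eqref{homjordanid2} and dividing by $3$.

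For $(2) \Rightarrow (3)$, I would apply \eqref{homjordanid2} with $x$ replaced by $x + w$, again expanding via trilinearity and using $(x+w)^2 = x^2 + 2xw + w^2$. Subtracting the two instances of \eqref{homjordanid2} (at $x$ and at $w$, each with the same $y,z$) cancels the pure $x$- and $w$-terms and leaves exactly $2\bigl[as_A(xz,\alpha(y),\alpha(w)) + as_A(wz,\alpha(y),\alpha(x)) + as_A(xw,\alpha(y),\alpha(z))\bigr] = 0$; dividing by $2$ and rewriting $xz = zx$ via commutativity of $\mu$ yields \eqref{homjordanid3}. Conversely, $(3) \Rightarrow (2)$ by setting $w = x$ in \eqref{homjordanid3} and again using $zx = xz$ to combine the first two summands into $2\,as_A(xz,\alpha(y),\alpha(x))$.

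There is no real obstacle: the multiplicativity of $\alpha$ is not needed for this proposition (only bilinearity of $\mu$ and trilinearity of $as_A$), and the only subtlety is bookkeeping the expansion of $(x + \lambda z)^2$ and $(x+w)^2$ correctly via the commutativity hypothesis $\mu = \mu \circ \tau$. The characteristic-zero assumption is used only to divide by $2$ and $3$ at the end of each implication.
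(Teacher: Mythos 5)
Your proposal is correct and follows essentially the same linearization/specialization argument as the paper: substitute $x \mapsto x+\lambda z$ to pass from \eqref{homjordanid} to \eqref{homjordanid2}, substitute $x \mapsto x+w$ to pass to \eqref{homjordanid3}, and specialize variables to go back. The only cosmetic difference is that you close each equivalence separately (e.g.\ recovering \eqref{homjordanid} from \eqref{homjordanid2} by setting $z=x$ and dividing by $3$), whereas the paper closes the cycle $(1)\Rightarrow(2)\Rightarrow(3)\Rightarrow(1)$ by setting $w=z=x$ in \eqref{homjordanid3}; both are valid in characteristic $0$.
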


\begin{proof}
We will show the implications (1) $\Rightarrow$ (2) $\Rightarrow$ (3) $\Rightarrow$ (1).

First assume that $A$ is a Hom-Jordan algebra.  Replace $x$ with $x + \lambda z$ for $\lambda \in \bk$ in the Hom-Jordan identity \eqref{homjordanid}.  Using the commutativity of $\mu$ and \eqref{homjordanid}, the result is
\begin{equation}
\label{hj2}
\begin{split}
0 &= \lambda\left\{2as_A(xz,\alpha(y),\alpha(x)) + as_A(x^2,\alpha(y),\alpha(z))\right\}\\
&\relphantom{} + \lambda^2\left\{2as_A(xz,\alpha(y),\alpha(z)) + as_A(z^2,\alpha(y),\alpha(x))\right\}.
\end{split}
\end{equation}
Since \eqref{hj2} holds for both $\lambda = 1$ and $\lambda = -1$, it follows that the coefficient of $\lambda$ in \eqref{hj2} is equal to $0$, which is exactly the condition \eqref{homjordanid2}.  So statement (1) implies statement (2).

Next assume that $A$ satisfies \eqref{homjordanid2}.  Replace $x$ with $x + \gamma w$ for $\gamma \in \bk$ in \eqref{homjordanid2}.  By the same reasoning as in the previous paragraph, in the resulting expression the coefficient of $\gamma$ must be equal to $0$.  A simple computation shows that this coefficient of $\gamma$ is twice the left-hand side of \eqref{homjordanid3}.  Therefore, statement (2) implies statement (3).

Finally, starting from \eqref{homjordanid3}, one sets $w = z = x$ to obtain the Hom-Jordan identity \eqref{homjordanid}.
\end{proof}

Note that the linearized Hom-Jordan identity \eqref{homjordanid3} can be written as
\[
\cyclicsum_{x,w,z} \,as_A(xw,\alpha(y),\alpha(z)) = 0,
\]
where $\cyclicsum_{x,w,z}$ is the cyclic sum over $(x,w,z)$.

Here is the main result of this section.

\begin{theorem}
\label{thm:hahj}
Every Hom-alternative algebra is Hom-Jordan-admissible.
\end{theorem}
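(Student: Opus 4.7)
The plan is to prove the Hom-Jordan identity $as_{A^+}(x^2, \alpha(y), \alpha(x)) = 0$ by direct expansion. Since $\ast = \tfrac{1}{2}(\mu + \mu \circ \tau)$, $x \ast x = x^2$, and $\alpha(x^2) = \alpha(x)\alpha(x)$ by multiplicativity, the quantity $4 \cdot as_{A^+}(x^2, \alpha(y), \alpha(x))$ unfolds into a signed sum of eight $\mu$-monomials $T_1, \ldots, T_8$: the four terms from $(x^2 \ast \alpha(y)) \ast \alpha^2(x)$ with a $+$ sign and the four terms from $\alpha(x^2) \ast (\alpha(y) \ast \alpha(x))$ with a $-$ sign. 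For instance $T_1 = (x^2 \alpha(y))\alpha^2(x)$ and $T_5 = \alpha(x^2)(\alpha(y)\alpha(x))$. I would then group them into four differences $T_1 - T_5$, $T_4 - T_8$, $T_2 - T_7$, $T_3 - T_6$ and show that each vanishes, using only the alternating property of $as_A$ together with Corollary \ref{cor1:homalt}.

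The first pair $T_1 - T_5$ is exactly $as_A(x^2, \alpha(y), \alpha(x))$, which equals $\alpha^2(x)\, as_A(x,y,x) + as_A(x,y,x)\,\alpha^2(x) = 0$ by \eqref{homalt1} combined with Hom-flexibility. The second pair $T_4 - T_8 = -as_A(\alpha(x), \alpha(y), x^2)$ is the same Hom-associator with its first and third arguments swapped, so it too vanishes by the alternating property. For the third pair, the alternating rearrangements $as_A(\alpha(y), x^2, \alpha(x)) = 0$ and $as_A(\alpha(y), \alpha(x), x^2) = 0$ rewrite $T_2 = \alpha^2(y)(x^2 \alpha(x))$ and $T_7 = \alpha^2(y)(\alpha(x) x^2)$, whence $T_2 - T_7 = \alpha^2(y) \cdot as_A(x,x,x) = 0$. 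For the fourth pair, multiplicativity of $\alpha$ gives $T_6 = (\alpha(x)\alpha(x))\alpha(xy)$, and $as_A(\alpha(x), \alpha(x), xy) = 0$ rewrites this as $\alpha^2(x)(\alpha(x)(xy))$, so $T_3 - T_6 = \alpha^2(x) \cdot as_A(x,x,y) = 0$.

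The main obstacle is purely combinatorial: one must align the eight monomials so that each resulting pair reduces to a Hom-associator whose arguments already contain a repeated entry, which forces vanishing by alternativity. No identity beyond Corollary \ref{cor1:homalt} and the multiplicativity of $\alpha$ is required; in particular, the Hom-Bruck-Kleinfeld function used in Theorem \ref{thm:homaltmaltsev} does not enter. The outer $\alpha$ on $y$ in the new Hom-Jordan identity \eqref{homjordanid} is precisely what makes the fourth pair work, via the identification $\alpha(x)\alpha(y) = \alpha(xy)$; this helps explain why the earlier definition in \cite{mak} fails to admit Hom-alternative algebras.
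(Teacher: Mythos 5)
Your proposal is correct and follows essentially the same route as the paper: expand $4\,as_{A^+}(x^2,\alpha(y),\alpha(x))$ into the eight $\mu$-monomials and kill them using \eqref{homalt1} together with the alternating property of $as_A$ and the multiplicativity of $\alpha$. The only difference is bookkeeping: the paper first records the expansion as a general identity valid for any Hom-algebra (Lemma \ref{lemma:asaplus}), namely six permuted Hom-associators of $(x^2,\alpha(y),\alpha(x))$ plus the commutator $[\alpha^2(y),as_A(x,x,x)]$, whereas you pair the monomials directly into four differences, each of which vanishes individually; both versions check out.
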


To prove Theorem \ref{thm:hahj}, we will use the following preliminary observation.

\begin{lemma}
\label{lemma:asaplus}
Let $(A,\mu,\alpha)$ be any Hom-algebra and $A^+ = (A,\ast,\alpha)$ be its plus Hom-algebra.  Then we have
\begin{equation}
\label{4as}
\begin{split}
4as_{A^+}(x^2,\alpha(y),\alpha(x))
&= as_A(x^2,\alpha(y),\alpha(x)) - as_A(\alpha(x),\alpha(y),x^2)\\
&\relphantom{} + as_A(\alpha(y),x^2,\alpha(x)) - as_A(\alpha(x),x^2,\alpha(y))\\
&\relphantom{} + as_A(x^2,\alpha(x),\alpha(y)) - as_A(\alpha(y),\alpha(x),x^2)\\
&\relphantom{} + [\alpha^2(y),as_A(x,x,x)]
\end{split}
\end{equation}
for all $x,y \in A$, where $[-,-] = \mu\circ(Id - \tau)$ is the commutator bracket.
\end{lemma}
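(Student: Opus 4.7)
The plan is to prove this as a purely formal identity that holds in every Hom-algebra (no alternativity or any other special hypothesis is needed beyond multiplicativity of $\alpha$), by expanding both sides fully and matching terms. The only structural facts I will use are the definition $x \ast y = \tfrac{1}{2}(xy + yx)$, the multiplicativity identity $\alpha(x^2) = \alpha(x)^2$, and the equality $x \ast x = x^2$ from \eqref{xsquare}.

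First, I would expand the left-hand side. Writing $as_{A^+}(a,b,c) = (a\ast b)\ast \alpha(c) - \alpha(a)\ast(b\ast c)$ with $a = x^2$, $b = \alpha(y)$, $c = \alpha(x)$, and symmetrizing each $\ast$, the factor $1/2$ from each of the four $\ast$'s combines into an overall $1/4$, so $4\,as_{A^+}(x^2,\alpha(y),\alpha(x))$ becomes an explicit signed sum of eight triple $\mu$-products, each built from the letters $x^2$, $\alpha(y)$, $\alpha^2(x)$, and $\alpha(x)^2 = \alpha(x^2)$ in various bracketings: four coming from $(x^2\ast\alpha(y))\ast\alpha^2(x)$ and four from $\alpha(x^2)\ast(\alpha(y)\ast\alpha(x))$.

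Second, I would expand each of the six Hom-associators on the right-hand side via $as_A(a,b,c) = (ab)\alpha(c) - \alpha(a)(bc)$, producing twelve $\mu$-triple products, and expand the commutator into the four products
\[
[\alpha^2(y),as_A(x,x,x)] = \alpha^2(y)(x^2\alpha(x)) - \alpha^2(y)(\alpha(x)x^2) - (x^2\alpha(x))\alpha^2(y) + (\alpha(x)x^2)\alpha^2(y).
\]
I would then collect the resulting sixteen terms according to which letter occupies the outermost position in each product.

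Third, I would perform the cancellation. Terms in which $\alpha^2(y)$ appears as an outer factor arise from the second and third pairs of Hom-associators on the right-hand side (namely from $as_A(\alpha(y),x^2,\alpha(x))$, $as_A(\alpha(x),x^2,\alpha(y))$, $as_A(x^2,\alpha(x),\alpha(y))$, $as_A(\alpha(y),\alpha(x),x^2)$). These produce eight products involving $\alpha^2(y)$ with various signs; four of them cancel in pairs directly from the six-associator sum, and the remaining four are annihilated exactly by the four commutator terms. What survives is precisely the eight-term expansion of $4\,as_{A^+}(x^2,\alpha(y),\alpha(x))$ produced in the first step, proving the identity.

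The main obstacle is bookkeeping rather than any substantive mathematical difficulty: one must track eight terms on the left and sixteen on the right without loss, and the non-trivial observation is that the commutator $[\alpha^2(y),as_A(x,x,x)]$ is exactly what is needed to absorb the leftover $\alpha^2(y)$-terms from the six chosen associators. The specific choice of the six associators on the right-hand side (in particular, the two pairs $(\alpha(y), x^2, \alpha(x))$ versus $(\alpha(x), x^2, \alpha(y))$ and $(x^2, \alpha(x), \alpha(y))$ versus $(\alpha(y), \alpha(x), x^2)$ whose $\alpha^2(y)$-contributions need the commutator to close up) explains the form of the statement.
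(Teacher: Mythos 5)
Your proposal is correct and is essentially the paper's own proof run in reverse: the paper expands $4\,as_{A^+}(x^2,\alpha(y),\alpha(x))$ into eight $\mu$-products and then reassembles them into the six Hom-associators plus the commutator, while you expand both sides into raw products and match; both are the same direct verification valid in any Hom-algebra using only multiplicativity of $\alpha$. One bookkeeping correction: of the eight products produced by the four associators $as_A(\alpha(y),x^2,\alpha(x))$, $as_A(\alpha(x),x^2,\alpha(y))$, $as_A(x^2,\alpha(x),\alpha(y))$, $as_A(\alpha(y),\alpha(x),x^2)$, only \emph{four} have $\alpha^2(y)$ as an outer factor, and those four are exactly the ones annihilated by $[\alpha^2(y),as_A(x,x,x)]$ --- there is no internal pairwise cancellation among the twelve associator products, and the remaining four (with $\alpha(y)$ as an inner factor) survive and match four of the eight terms of the left-hand side. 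With that adjustment the term count closes exactly and the identity holds as you claim.
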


\begin{proof}
As usual we write $\mu(a,b)$ as the juxtaposition $ab$, and $\mu(x,x) = x^2 = x \ast x$.  Starting from the left-hand side of \eqref{4as}, we have:
\begin{equation}
\label{4as1}
\begin{split}
4&as_{A^+}(x^2,\alpha(y),\alpha(x)) \\
&= 4(x^2 \ast \alpha(y)) \ast \alpha^2(x) - 4\alpha(x^2) \ast (\alpha(y) \ast \alpha(x))\\
&= (x^2\alpha(y))\alpha^2(x) + (\alpha(y)x^2)\alpha^2(x) + \alpha^2(x)(x^2\alpha(y)) + \alpha^2(x)(\alpha(y)x^2)\\
&\relphantom{} - \alpha(x^2)(\alpha(y)\alpha(x)) - \alpha(x^2)(\alpha(x)\alpha(y)) - (\alpha(y)\alpha(x))\alpha(x^2) - (\alpha(x)\alpha(y))\alpha(x^2)\\
&= as_A(x^2,\alpha(y),\alpha(x)) - as_A(\alpha(x),\alpha(y),x^2)\\
&\relphantom{} + (\alpha(y)x^2)\alpha^2(x) + \alpha^2(x)(x^2\alpha(y)) - \alpha(x^2)(\alpha(x)\alpha(y)) - (\alpha(y)\alpha(x))\alpha(x^2).
\end{split}
\end{equation}
Using the definition of the Hom-associator \eqref{homassociator}, the last four terms in \eqref{4as1} are:
\begin{equation}
\label{4as2}
\begin{split}
(\alpha(y)x^2)\alpha^2(x) &= as_A(\alpha(y),x^2,\alpha(x)) + \alpha^2(y)(x^2\alpha(x)),\\
\alpha^2(x)(x^2\alpha(y)) &= -as_A(\alpha(x),x^2,\alpha(y)) + (\alpha(x)x^2)\alpha^2(y),\\
- \alpha(x^2)(\alpha(x)\alpha(y)) &= as_A(x^2,\alpha(x),\alpha(y)) - (x^2\alpha(x))\alpha^2(y),\\
- (\alpha(y)\alpha(x))\alpha(x^2) &= - as_A(\alpha(y),\alpha(x),x^2) - \alpha^2(y)(\alpha(x)x^2).
\end{split}
\end{equation}
Note that
\begin{equation}
\label{4as3}
\begin{split}
[\alpha^2(y),as_A(x,x,x)] &= [\alpha^2(y), (x^2)\alpha(x) - \alpha(x)x^2]\\
&= \alpha^2(y)(x^2\alpha(x)) - \alpha^2(y)(\alpha(x)x^2) - (x^2\alpha(x))\alpha^2(y) + (\alpha(x)x^2)\alpha^2(y).
\end{split}
\end{equation}
The desired condition \eqref{4as} now follows from \eqref{4as1}, \eqref{4as2}, and \eqref{4as3}.
\end{proof}

\begin{proof}[Proof of Theorem \ref{thm:hahj}]
Let $(A,\mu,\alpha)$ be a Hom-alternative algebra.  To show that it is Hom-Jordan-admissible, it suffices to prove the Hom-Jordan identity for its plus Hom-algebra $A^+$ \eqref{homjordanaplus}.  To do this, first observe that $A$ itself satisfies the Hom-Jordan identity:
\[
\begin{split}
as_A(x^2,\alpha(y),\alpha(x)) &= \alpha^2(x)as_A(x,y,x) + as_A(x,y,x)\alpha^2(x)\quad\text{(by \eqref{homalt1})}\\
&= 0\quad\text{(by alternativity of $as_A$)}.
\end{split}
\]
Using again the alternativity of $as_A$, this implies that
\[
0 = (as_A \circ \theta)(x^2,\alpha(y),\alpha(x))
\]
for every permutation $\theta$ on three letters.  Since $as_A(x,x,x) = 0$ as well, it follows from Lemma \ref{lemma:asaplus} that
\[
4as_{A^+}(x^2,\alpha(y),\alpha(x)) = 0,
\]
from which the desired Hom-Jordan identity for $A^+$ \eqref{homjordanaplus} follows.
\end{proof}


The following construction results are the analogues of Theorems \ref{thm:maltsevtp2} and \ref{thm:maltsevtp1} for Hom-Jordan and Hom-Jordan-admissible algebras.

\begin{theorem}
\label{thm:hjtp}
\begin{enumerate}
\item
Let $(A,\mu)$ be a Jordan algebra and $\alpha \colon A \to A$ be an algebra morphism.  Then the induced Hom-algebra $A_\alpha = (A,\mualpha = \alpha \circ \mu,\alpha)$ is a Hom-Jordan algebra.
\item
Let $(A,\mu,\alpha)$ be a Hom-Jordan algebra.  Then the derived Hom-algebra $A^n = (A,\mun=\alpha^{2^n-1}\circ\mu,\alpha^{2^n})$ is also a Hom-Jordan algebra for each $n \geq 0$.
\end{enumerate}
\end{theorem}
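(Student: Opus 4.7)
The plan is to mirror the strategy used in Theorems \ref{thm:maltsevtp2} and \ref{thm:maltsevtp1}, exploiting the associator formulas $as_{A_\alpha} = \alpha^2 \circ as_A$ and $as_{A^n} = \alpha^{2(2^n-1)} \circ as_A$ already established in \eqref{associatoraalpha} and \eqref{associatoran}, together with the multiplicativity of the twisting map. In both parts, commutativity is the cheap half of the verification: $\mualpha = \alpha \circ \mu$ and $\mun = \alpha^{2^n-1}\circ\mu$ both inherit commutativity from $\mu$, since $\alpha$ is linear and commutes with $\tau$ in the obvious sense.

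For part (1), I would first rewrite the $\mualpha$-square using multiplicativity: $\mualpha(x,x) = \alpha(\mu(x,x)) = \mu(\alpha(x),\alpha(x))$. Then, applying \eqref{associatoraalpha}, the Hom-Jordan identity for $A_\alpha$ reads
\[
as_{A_\alpha}(\mualpha(x,x),\alpha(y),\alpha(x)) = \alpha^2\bigl(as_A(\mu(\alpha(x),\alpha(x)),\,\alpha(y),\,\alpha(x))\bigr).
\]
Since $A$ is an ordinary Jordan algebra (twisting map $Id$), the expression inside $\alpha^2$ is just the classical Jordan identity $as_A(u^2,v,u)=0$ evaluated at $u = \alpha(x)$, $v = \alpha(y)$, and hence vanishes.

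For part (2), I would proceed analogously, either by an induction on $n$ using $A^{n+1} = (A^n)^1$ as in Theorem \ref{thm:maltsevtp2}, or directly. Multiplicativity gives $\mun(x,x) = \alpha^{2^n-1}(\mu(x,x)) = \mu(\alpha^{2^n-1}(x),\alpha^{2^n-1}(x))$. Setting $x' = \alpha^{2^n-1}(x)$ and $y' = \alpha^{2^n-1}(y)$ so that $\alpha^{2^n}(x) = \alpha(x')$ and $\alpha^{2^n}(y) = \alpha(y')$, the formula \eqref{associatoran} reduces
\[
as_{A^n}(\mun(x,x),\alpha^{2^n}(y),\alpha^{2^n}(x)) = \alpha^{2(2^n-1)}\bigl(as_A(\mu(x',x'),\,\alpha(y'),\,\alpha(x'))\bigr),
\]
and the inner expression is precisely the Hom-Jordan identity \eqref{homjordanid} in $A$ itself, evaluated at $(x',y')$, hence zero.

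The only real subtlety — and what I would flag as the main bookkeeping point rather than a genuine obstacle — is that the symbol $x^2$ means three different elements in $A$, $A_\alpha$, and $A^n$; everything lines up cleanly only because multiplicativity of $\alpha$ with respect to $\mu$ lets one slide the twist through the square.
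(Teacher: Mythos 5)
Your proof is correct and follows essentially the same route as the paper: both parts reduce to the associator formulas $as_{A_\alpha}=\alpha^2\circ as_A$ and $as_{A^n}=\alpha^{2(2^n-1)}\circ as_A$ plus multiplicativity, the only cosmetic difference being that you evaluate the (Hom-)Jordan identity at the twisted arguments $\alpha(x)$, resp.\ $\alpha^{2^n-1}(x)$, whereas the paper pulls every power of $\alpha$ outside and evaluates the identity at $x$ itself.
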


\begin{proof}
For the first assertion, first note that $\mualpha = \alpha\circ\mu$ is commutative.   To prove the Hom-Jordan identity \eqref{homjordanid} in $A_\alpha$, regard $(A,\mu)$ as the Hom-algebra $(A,\mu,Id)$.  Then  we have:
\[
\begin{split}
as_{A_\alpha}(\mualpha(x,x),\alpha(y),\alpha(x))
&= as_{A_\alpha}(\alpha(x^2),\alpha(y),\alpha(x))\\
&= \alpha^2\left(as_A(\alpha(x^2),\alpha(y),\alpha(x))\right)\quad\text{(by \eqref{associatoraalpha})}\\
&= \alpha^3\left(as_A(x^2,y,x)\right)\\
&= 0\quad\text{(by \eqref{jordan} in $A$)}.
\end{split}
\]
This shows that $A_\alpha$ is a Hom-Jordan algebra.

For the second assertion, first note that $\mun = \alpha^{2^n-1}\circ\mu$ is commutative.  To prove the Hom-Jordan identity \eqref{homjordanid} in $A^n$, we compute as follows:
\[
\begin{split}
as_{A^n}(\mun(x,x),\alpha^{2^n}(y),\alpha^{2^n}(x))
&= \alpha^{2(2^n-1)} \circ as_A(\alpha^{2^n-1}(x^2),\alpha^{2^n}(y),\alpha^{2^n}(x))\quad\text{(by \eqref{associatoran})}\\
&= \alpha^{3(2^n-1)} \circ as_A(x^2,\alpha(y),\alpha(x))\\
&= 0 \quad\text{(by \eqref{homjordanid} in $A$)}.
\end{split}
\]
This shows that $A^n$ is a Hom-Jordan algebra.
\end{proof}

\begin{theorem}
\label{thm:hjatp}
\begin{enumerate}
\item
Let $(A,\mu)$ be a Jordan-admissible algebra and $\alpha \colon A \to A$ be an algebra morphism.  Then the induced Hom-algebra $A_\alpha = (A,\mualpha = \alpha \circ \mu,\alpha)$ is a Hom-Jordan-admissible algebra.
\item
Let $(A,\mu,\alpha)$ be a Hom-Jordan-admissible algebra.  Then the derived Hom-algebra $A^n = (A,\mun=\alpha^{2^n-1}\circ\mu,\alpha^{2^n})$ is also a Hom-Jordan-admissible algebra for each $n \geq 0$.
\end{enumerate}
\end{theorem}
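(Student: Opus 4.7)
The plan is to reduce both assertions to the corresponding statements of Theorem \ref{thm:hjtp} by observing that the plus Hom-algebra construction $A \mapsto A^+$ commutes with both of the twisting constructions $A \mapsto A_\alpha$ and $A \mapsto A^n$. The key identities I would establish at the outset are
\[
(A_\alpha)^+ = (A^+)_\alpha \quad\text{and}\quad (A^n)^+ = (A^+)^n,
\]
as Hom-algebras. Both are short direct calculations: the multiplication of $(A_\alpha)^+$ is $\tfrac12(\mualpha + \mualpha\circ\tau) = \alpha\circ\tfrac12(\mu + \mu\circ\tau) = \alpha\circ\ast$, which is exactly the multiplication of $(A^+)_\alpha$; and for the derived version one uses $\mun = \alpha^{2^n-1}\circ\mu$ together with the fact that $\alpha^{2^n-1}$ is linear (hence commutes with symmetrization over $\tau$).

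For part (1), since $(A,\mu)$ is Jordan-admissible, the plus algebra $A^+ = (A,\ast)$ is a Jordan algebra. The map $\alpha$ is a morphism of $(A,\mu)$, and since $\ast$ is built from $\mu$ by symmetrization, $\alpha$ is automatically a morphism of $(A,\ast)$ as well (i.e., $\alpha(x\ast y) = \alpha(x)\ast\alpha(y)$). Applying Theorem \ref{thm:hjtp}(1) to the Jordan algebra $A^+$ and morphism $\alpha$, we conclude that $(A^+)_\alpha$ is a Hom-Jordan algebra. By the identity above, this is the same as $(A_\alpha)^+$, so $A_\alpha$ is Hom-Jordan-admissible.

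For part (2), since $(A,\mu,\alpha)$ is Hom-Jordan-admissible, its plus Hom-algebra $A^+$ is a Hom-Jordan algebra. Applying Theorem \ref{thm:hjtp}(2) to $A^+$, the $n$th derived Hom-algebra $(A^+)^n$ is again a Hom-Jordan algebra. By the identity $(A^n)^+ = (A^+)^n$, the plus Hom-algebra of $A^n$ is Hom-Jordan, so $A^n$ is Hom-Jordan-admissible.

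There is no real obstacle here: the entire proof is a bookkeeping reduction once one verifies that symmetrization commutes with post-composition by $\alpha$ or $\alpha^{2^n-1}$, which is immediate from linearity. The only mild subtlety is confirming that $\alpha$, being a morphism of $(A,\mu)$, is automatically a morphism of the symmetrized product $(A,\ast)$ in part (1); this follows at once from the multiplicativity hypothesis built into Definition \ref{def:homalgebra}.
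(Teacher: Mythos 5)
Your proposal is correct and matches the paper's argument in essence: the paper likewise establishes the identities $(A_\alpha)^+ = (A^+)_\alpha$ and $(A^n)^+ = (A^+)^n$ and then verifies the Hom-Jordan identity for $(A^+)_\alpha$ and $(A^+)^n$ by the very computation that proves Theorem \ref{thm:hjtp}, merely inlining that computation rather than citing the theorem as you do. Your observation that $\alpha$ is automatically multiplicative for the symmetrized product $\ast$ is exactly the point needed to make the citation legitimate, so the reduction is sound.
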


\begin{proof}
For the first assertion, first note that the plus Hom-algebra $(A_\alpha)^+ = (A,\ast_\alpha,\alpha)$ satisfies
\[
\ast_\alpha = \frac{1}{2}(\mualpha + \mualpha \circ \tau) = \alpha \circ \frac{1}{2}(\mu + \mu\circ\tau) = \alpha \circ \ast.
\]
Therefore, we have $(A_\alpha)^+ = (A^+)_\alpha$, where $A^+$ is the Jordan-algebra $(A,\ast)$.  Since $\ast_\alpha$ is commutative, it remains to prove the Hom-Jordan identity in $(A_\alpha)^+ = (A^+)_\alpha$.  We compute as follows:
\[
\begin{split}
as_{(A^+)_\alpha}(\mualpha(x,x),\alpha(y),\alpha(x))
&= \alpha^2 \circ as_{A^+}(\alpha(x^2),\alpha(y),\alpha(x))\quad\text{(by \eqref{associatoraalpha} in $A^+$)}\\
&= \alpha^3\left(as_{A^+}(x^2,y,x)\right)\\
&= 0\quad\text{(by \eqref{jordan} in $A^+$)}.
\end{split}
\]
This shows that $(A_\alpha)^+$ satisfies the Hom-Jordan identity, so $A_\alpha$ is Hom-Jordan-admissible.

For the second assertion, first note that the plus Hom-algebra $(A^n)^+ = (A,\ast^{(n)},\alpha^{2^n})$ satisfies
\[
\ast^{(n)} = \frac{1}{2}(\mun + \mun \circ \tau) = \alpha^{2^n-1}\circ\frac{1}{2}(\mu + \mu \circ \tau) = \alpha^{2^n-1}\circ\ast.
\]
Therefore, we have $(A^n)^+ = (A^+)^n$, where $A^+$ is the Hom-Jordan algebra $(A,\ast,\alpha)$ and $(A^+)^n$ is its $n$th derived Hom-algebra.  Since $\ast^{(n)}$ is commutative, it remains to prove the Hom-Jordan identity in $(A^n)^+ = (A^+)^n$.  We compute as follows:
\[
\begin{split}
as_{(A^+)^n}&(\mun(x,x),\alpha^{2^n}(y),\alpha^{2^n}(x))\\
&= \alpha^{2(2^n-1)}\circ as_{A^+}(\alpha^{2^n-1}(x^2),\alpha^{2^n}(y),\alpha^{2^n}(x))\quad\text{(by \eqref{associatoran} in $A^+$)}\\
&= \alpha^{3(2^n-1)}\circ as_{A^+}(x^2,\alpha(y),\alpha(x))\\
&= 0 \quad\text{(by \eqref{homjordanid} in $A^+$)}.
\end{split}
\]
This shows that $(A^n)^+$ is a Hom-Jordan algebra, so $A^n$ is Hom-Jordan-admissible.
\end{proof}


\begin{example}
\label{ex:m83}
In this example, we discuss how (non-Jordan) Hom-Jordan algebras can be constructed from the $27$-dimensional exceptional simple Jordan algebra $M^8_3$.  First recall from Example \ref{ex:oct} the octonions $\oct$, which is an eight-dimensional alternative (but not associative) algebra with basis $\{e_0,\ldots,e_7\}$, where $e_0$ is a two-sided multiplicative unit.  For an octonion $x = \sum_{i=0}^7 b_ie_i$ with each $b_i \in \bk$, its \emph{conjugate} is defined as the octonion $\xbar = b_0e_0 - \sum_{i=1}^7 b_ie_i$.

The elements of $M^8_3$ are $3 \times 3$ Hermitian octonionic matrices, i.e., matrices of the form
\[
X =
\begin{pmatrix}
a_1 & x & y\\
\xbar & a_2 & z\\
\ybar & \zbar & a_3
\end{pmatrix}
\]
with each $a_i \in \bk$ and $x,y,z \in \oct$.  Here we are using the convention $a_i = a_ie_0$ for the diagonal elements.  This $\bk$-module $M^8_3$ becomes a Jordan algebra with the multiplication
\[
X \ast Y = \frac{1}{2}(XY + YX),
\]
where $XY$ and $YX$ are the usual matrix multiplication.  The reader is referred to  \cite{gt,jvw,okubo,schafer} for discussions about the Jordan algebra $M^8_3$ and its relationship with the exceptional Lie algebra $F_4$.

Let $\alpha \colon \oct \to \oct$ be any unit-preserving and conjugate-preserving algebra morphism, i.e., $\alpha(e_0) = e_0$ and $\alpha(\xbar) = \overline{\alpha(x)}$ for all $x \in \oct$.  Then it extends entrywise to a linear map $\alpha \colon M^8_3 \to M^8_3$.  It is easy to see that this extended map $\alpha$ respects matrix multiplication and hence also the Jordan product $\ast$, i.e., $\alpha$ is an algebra morphism on $(M^8_3,\ast)$.  By the first part of Theorem \ref{thm:hjtp}, the induced Hom-algebra
\[
(M^8_3)_\alpha = (M^8_3,\ast_\alpha = \alpha\circ\ast,\alpha)
\]
is a Hom-Jordan algebra.

Note that $(M^8_3,\ast_\alpha = \alpha\circ\ast)$ is in general not a Jordan algebra.  For instance, consider the algebra automorphism $\alpha \colon \oct \to \oct$ defined in \eqref{octaut}, which is both unit-preserving and conjugate-preserving.  We claim that $(M^8_3,\ast_\alpha)$ is not a Jordan algebra, i.e., the Jordan identity
\begin{equation}
\label{jordanM}
((X \ast_\alpha X) \ast_\alpha Y) \ast_\alpha X = (X \ast_\alpha X) \ast_\alpha (Y \ast_\alpha X)
\end{equation}
is not satisfied for some $X,Y \in M^8_3$.  Indeed, using the multiplicativity of $\alpha$ with respect to $\ast$, the left-hand side in \eqref{jordanM} is
\[
\alpha(\alpha(\alpha(X^2) \ast Y) \ast X),
\]
where $X^2 = X \ast X$, and its right-hand side is
\[
\alpha^2(X^2 \ast (Y \ast X)).
\]
Since $\alpha$ is invertible, to show that $(M^8_3,\ast_\alpha)$ is not a Jordan algebra, it suffices to exhibit two elements $X, Y \in M^8_3$ such that
\begin{equation}
\label{notjordan}
\alpha(\alpha(X^2) \ast Y) \ast X \not= \alpha(X^2 \ast (Y \ast X)).
\end{equation}
Now we pick
\[
X = \matrixx \quad\text{and}\quad
Y = \matrixy
\]
in $M^8_3$.  With a little bit of computation, we obtain
\[
\alpha(\alpha(X^2) \ast Y) \ast X = \matrixz
\]
and
\[
\alpha(X^2 \ast (Y \ast X)) = \matrixw.
\]
Therefore, we have proved \eqref{notjordan}, so $(M^8_3,\ast_\alpha)$ is not a Jordan algebra.\qed
\end{example}

\section{Further properties of Hom-alternative algebras}
\label{sec:moufang}

In this section, we consider further properties of Hom-alternative algebras, including Hom-type analogues of the Moufang identities \cite{moufang} (Theorem \ref{thm:moufang}) and more identities concerning the Hom-Bruck-Kleinfeld function \eqref{f} (Propositions \ref{prop:f2} and \ref{prop:g}).

By Theorem \ref{thm:homaltmaltsev} every Hom-alternative algebra is Hom-Maltsev-admissible.  As we saw in Example \ref{ex:oct}, there are Hom-alternative algebras that are not Hom-Lie-admissible.  It is, therefore,  natural to ask which Hom-alternative algebras are Hom-Lie-admissible.
The following result says that the intersection of the classes of Hom-alternative algebras and of Hom-Lie-admissible algebras (within the class of Hom-algebras) is precisely the class of Hom-associative algebras.

\begin{proposition}
\label{cor:homaltass}
Let $(A,\mu,\alpha)$ be a Hom-algebra.  Then $A$ is a Hom-associative algebra if and only if it is both a Hom-alternative algebra and a Hom-Lie-admissible algebra.
\end{proposition}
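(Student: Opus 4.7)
The plan is to establish both directions quickly by leveraging Proposition \ref{prop2:homalt}, which already did the heavy lifting by showing that $J_{A^-} = 6\, as_A$ in any Hom-alternative algebra.

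For the forward direction, I would simply note that if $as_A = 0$, then $as_A$ is trivially alternating, so $A$ is Hom-alternative. Moreover, by the result of \cite{ms} cited in Example \ref{ex:hmadmissible} (Hom-associative algebras are Hom-Lie-admissible, a special case of $G$-Hom-associativity), the commutator Hom-algebra $A^-$ is a Hom-Lie algebra, so $A$ is Hom-Lie-admissible. Alternatively, one can give a self-contained proof: by Lemma \ref{lem:hahj}, $J_{A^-} = as_A \circ (Id + \sigma + \sigma^2) \circ (Id - \delta)$, which vanishes whenever $as_A$ does.

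For the reverse direction, assume $A$ is both Hom-alternative and Hom-Lie-admissible. Hom-Lie-admissibility means $J_{A^-} = 0$. Applying Proposition \ref{prop2:homalt}, which gives the identity $J_{A^-} = 6\, as_A$ in any Hom-alternative algebra, we obtain $6\, as_A = 0$. Since by the standing convention in the paper we work over a field $\bk$ of characteristic $0$, this forces $as_A = 0$, i.e., $A$ is Hom-associative.

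There is no real obstacle here; the work has already been absorbed into Proposition \ref{prop2:homalt} (whose proof rests on the alternation of $as_A$ combined with Lemma \ref{lem:hahj}). The only subtle point worth flagging in the writeup is the use of characteristic zero to cancel the factor $6$, which is consistent with the conventions fixed at the start of Section \ref{sec:hommaltsev}.
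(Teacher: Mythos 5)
Your proof is correct and follows essentially the same route as the paper: the forward direction cites that $as_A = 0$ is alternating and that Hom-associative algebras are Hom-Lie-admissible by \cite{ms}, while the converse applies Proposition \ref{prop2:homalt} to get $as_A = \frac{1}{6}J_{A^-} = 0$. Your explicit remark about characteristic zero justifying the division by $6$ is a reasonable (and correct) point of care that the paper leaves implicit.
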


\begin{proof}
If $A$ is Hom-associative, then by definition $as_A = 0$, which is alternating, so $A$ is Hom-alternative \cite{mak}.  It is observed in \cite{ms} that Hom-associative algebras are always Hom-Lie-admissible.  Conversely, if $A$ is Hom-alternative and Hom-Lie-admissible, then
\[
\begin{split}
as_A &= \frac{1}{6}J_{A^-}\quad\text{(by Proposition \ref{prop2:homalt})}\\
&= 0 \quad\text{($A^-$ is Hom-Lie)}.
\end{split}
\]
So $A$ is Hom-associative.
\end{proof}

It is proved in \cite{mak} that there is an analogue of Theorem \ref{thm:maltsevtp1} for Hom-alternative algebras.  It is a variation of \cite{yau2} (Theorem 2.3), which deals with $G$-Hom-associative algebras.  More precisely, if $(A,\mu)$ is an alternative algebra and $\alpha \colon A \to A$ is an algebra morphism, then the induced Hom-algebra $A_\alpha = (A,\mualpha = \alpha\circ\mu,\alpha)$ is a Hom-alternative algebra.

The following result is the analogue of Theorem \ref{thm:maltsevtp2} for Hom-alternative algebras.  It says that the category of Hom-alternative algebras is closed under taking derived Hom-algebras (Definition \ref{def:derivedhomalge}).

\begin{proposition}
\label{prop:alttp2}
Let $(A,\mu,\alpha)$ be a Hom-alternative algebra.  Then the $n$th derived Hom-algebra $A^n = (A,\mun = \alpha^{2^n-1}\circ\mu,\alpha^{2^n})$ is also a Hom-alternative algebra for each $n \geq 0$.
\end{proposition}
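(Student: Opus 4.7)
The plan is to exploit the identity
\[
as_{A^n} \;=\; \alpha^{2(2^n-1)} \circ as_A
\]
that was established in \eqref{associatoran} as a direct consequence of the multiplicativity of $\alpha$ with respect to $\mu$. That computation was carried out for an arbitrary Hom-algebra, so it applies here without modification.

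First I would verify that $A^n$ really is a Hom-algebra, i.e.\ that $\alpha^{2^n}$ is multiplicative with respect to $\mun = \alpha^{2^n-1}\circ\mu$; this is immediate from the multiplicativity of $\alpha$ with respect to $\mu$. Next, since $A$ is Hom-alternative, $as_A$ is alternating, which means $as_A \circ \pi = \epsilon(\pi)\, as_A$ for every permutation $\pi$ on three letters. Pre-composing \eqref{associatoran} with $\pi$ and using the $\bk$-linearity of $\alpha^{2(2^n-1)}$ yields
\[
as_{A^n} \circ \pi \;=\; \alpha^{2(2^n-1)} \circ (as_A \circ \pi) \;=\; \epsilon(\pi)\,\alpha^{2(2^n-1)} \circ as_A \;=\; \epsilon(\pi)\, as_{A^n},
\]
so $as_{A^n}$ is alternating and $A^n$ is Hom-alternative.

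If a presentation parallel to the proof of Theorem \ref{thm:maltsevtp2} is preferred, one can induct on $n$: since $A^0 = A$ and $A^{n+1} = (A^n)^1$, it suffices to settle the case $n = 1$, where \eqref{associatoran} specializes to $as_{A^1} = \alpha^2 \circ as_A$ and left and right Hom-alternativity \eqref{lefthomalt}--\eqref{righthomalt} pass from $A$ to $A^1$ by applying $\alpha^2$. There is no substantive obstacle here; the proposition is really a formal transport-of-structure consequence of \eqref{associatoran}, together with the observation that an alternating multilinear map remains alternating after post-composition with a linear endomorphism of the target.
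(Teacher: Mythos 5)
Your proof is correct and is essentially the paper's own argument: both rest on the identity $as_{A^n} = \alpha^{2(2^n-1)} \circ as_A$ from \eqref{associatoran} together with the observation that post-composing an alternating map with a linear map keeps it alternating. The extra remarks on multiplicativity and the inductive variant are fine but not needed.
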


\begin{proof}
Indeed, $as_{A^n}$ is alternating because
\[
as_{A^n} = \alpha^{2(2^n-1)} \circ as_A
\]
by \eqref{associatoran} and $as_A$ is alternating.
\end{proof}

Next we provide further properties of the Hom-Bruck-Kleinfeld function $f$ \eqref{f}.  The following result gives two characterizations of the Hom-Bruck-Kleinfeld function in a Hom-alternative algebra.  It is the Hom-type analogue of part of \cite{bk} (Lemma 2.1).

\begin{proposition}
\label{prop:f2}
Let $(A,\mu,\alpha)$ be a Hom-alternative algebra.  Then the Hom-Bruck-Kleinfeld function $f$ satisfies
\begin{equation}
\label{f'}
f = \frac{1}{3}F = as \circ ([-,-]\otimes \alpha^{\otimes 2}) \circ (Id + \zeta),
\end{equation}
where $F$ is defined in \eqref{F} and $\zeta$ is the permutation $\zeta(w \otimes x \otimes y \otimes z) = y \otimes z \otimes w \otimes x$
\end{proposition}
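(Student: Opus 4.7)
The plan is to combine the alternating property of $f$ established in Proposition \ref{prop:f} with the identity $F = f\circ(Id - \rho + \rho^2)$ of Lemma \ref{lem2:homalt} to first prove $f = \tfrac{1}{3}F$, and then to extract the third expression by linearizing the anti-symmetry of $f$ in two different pairs of arguments.

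For $f = \tfrac{1}{3}F$: the cyclic permutation $\rho$ is a $4$-cycle and hence has signature $-1$, so the alternating property of $f$ yields $f\circ\rho = -f$ and $f\circ\rho^2 = f$. Substituting these into the identity of Lemma \ref{lem2:homalt} gives $F = f - f\circ\rho + f\circ\rho^2 = 3f$.

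For the third equality, I would first use that $f$ is anti-symmetric in its first two arguments to write
\[
2f(w,x,y,z) = f(w,x,y,z) - f(x,w,y,z),
\]
and then expand both terms on the right using the defining formula \eqref{f}. The two ``Hom-associator of a product'' contributions combine to $as([w,x],\alpha(y),\alpha(z))$, while the remaining four terms pair into $[\alpha^2(w),as(x,y,z)] - [\alpha^2(x),as(w,y,z)]$. Since $\zeta = (13)(24)$ has signature $+1$, the alternating property of $f$ also yields $f(w,x,y,z) = f(y,z,w,x)$; applying the same linearization to $f(y,z,w,x)$ gives a second formula for $2f(w,x,y,z)$ with the roles of $(w,x)$ and $(y,z)$ swapped. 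Adding the two formulas produces $4f(w,x,y,z)$ on the left, and on the right the two Hom-associator terms give precisely $as\circ([-,-]\otimes\alpha^{\otimes 2})\circ(Id+\zeta)$ applied to $w\otimes x\otimes y\otimes z$. Using the cyclic invariance of $as_A$ on three arguments (a consequence of its alternativity) to rewrite $as(w,y,z) = as(y,z,w)$ and $as(y,w,x) = as(w,x,y)$, the four bracket terms assemble into $F(w,x,y,z)$ as written in \eqref{F'}. Substituting $F = 3f$ and solving for $f$ yields the claimed identity.

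The main obstacle is purely sign and index bookkeeping: one has to correctly identify the signatures of $\rho$ and $\zeta$ (respectively $-1$ and $+1$), and then match the four commutators produced by the linearization of $f$ against the four summands of $F$ via cyclic invariance of $as_A$. Once these are in place, the verification reduces to a routine expansion.
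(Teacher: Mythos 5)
Your proposal is correct and follows essentially the same route as the paper: first $F = 3f$ via the alternating property of $f$ combined with Lemma \ref{lem2:homalt}, then linearization of the anti-symmetry of $f$ in its first two slots applied to both $(w,x,y,z)$ and $(y,z,w,x)$ (using that $\zeta$ is even), summed to yield $as\circ([-,-]\otimes\alpha^{\otimes 2})\circ(Id+\zeta)$ on one side and $4f$ plus the commutator terms on the other. The only detail to watch is that those four commutator terms assemble into $-F(w,x,y,z)$ rather than $+F(w,x,y,z)$, which is precisely what makes the final count $4f - F = 4f - 3f = f$ come out as claimed.
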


\begin{proof}
First note that
\[
f = -f\circ\rho = f\circ\rho^2
\]
because $f$ is alternating (Proposition \ref{prop:f}), where $\rho = \xi^3$ is the cyclic permutation $\rho(w \otimes x \otimes y \otimes z) = x \otimes y \otimes z \otimes w$.  Therefore, we have
\[
\begin{split}
F &= f \circ (Id - \rho + \rho^2) \quad\text{(by Lemma \ref{lem2:homalt})}\\
&= 3f,
\end{split}
\]
which proves the first equality in \eqref{f'}.  It remains to prove that $f$ is equal to the last entry in \eqref{f'}.

Since $f$ is alternating, from its definition \eqref{f} we have
\[
\begin{split}
2f(w,x,y,z) &= f(w,x,y,z) - f(x,w,y,z)\\
&= as(wx,\alpha(y),\alpha(z)) - as(x,y,z)\alpha^2(w) - \alpha^2(x)as(w,y,z)\\
&\relphantom{} - as(xw,\alpha(y),\alpha(z)) + as(w,y,z)\alpha^2(x) + \alpha^2(w)as(x,y,z).
\end{split}
\]
Rearranging terms we obtain
\begin{equation}
\label{asbracket}
\begin{split}
as([w,x],\alpha(y),\alpha(z)) &= [\alpha^2(x),as(w,y,z)] - [\alpha^2(w),as(x,y,z)] + 2f(w,x,y,z)\\
&= [\alpha^2(x),as(y,z,w)] - [\alpha^2(w),as(x,y,z)] + 2f(w,x,y,z),
\end{split}
\end{equation}
in which $as(w,y,z) = as(y,z,w)$ because $as$ is alternating.  Interchanging $(w,x)$ with $(y,z)$ in \eqref{asbracket} and using the alternativity of $f$, we obtain
\begin{equation}
\label{asbracket'}
\begin{split}
as([y,z],\alpha(w),\alpha(x))
&= [\alpha^2(z),as(w,x,y)] - [\alpha^2(y),as(z,w,x)] + 2f(y,z,w,x)\\
&= [\alpha^2(z),as(w,x,y)] - [\alpha^2(y),as(z,w,x)] + 2f(w,x,y,z).
\end{split}
\end{equation}
Adding \eqref{asbracket} and \eqref{asbracket'} we have
\[
\begin{split}
as \circ(&[-,-]\otimes \alpha^{\otimes 2}) \circ (Id + \zeta)(w \otimes x \otimes y \otimes z)\\
&= as([w,x],\alpha(y),\alpha(z)) + as([y,z],\alpha(w),\alpha(x))\\
&= (4f - F)(w,x,y,z)\quad\text{(by \eqref{F'})}\\
&= f(w,x,y,z),
\end{split}
\]
since $F = 3f$.  This proves that $f$ is equal to the last entry in \eqref{f'}.
\end{proof}


In Proposition \ref{prop:f} we showed that the Hom-Bruck-Kleinfeld function $f$ \eqref{f} in a Hom-alternative algebra is an alternating function on four variables.  We now discuss a closely related  function on five variables in a Hom-alternative algebra.

\begin{definition}
\label{def:g}
Let $(A,\mu,\alpha)$ be a Hom-algebra.  Define the multi-linear map $g \colon A^{\otimes 5} \to A$ by
\begin{equation}
\label{g}
\begin{split}
g(u,v,w,x,y) &= f(uv,\alpha(w),\alpha(x),\alpha(y)) - \alpha^3(u)f(v,w,x,y)\\
&\relphantom{} - f(u,w,x,y)\alpha^3(v) - \alpha\left(as(u,x,y)\alpha[v,w]\right)\\
&\relphantom{} - \alpha\left((\alpha[u,w])as(v,x,y)\right)
\end{split}
\end{equation}
for $u,v,w,x,y \in A$, where $f$ is the Hom-Bruck-Kleinfeld function \eqref{f} and $[-,-] = \mu\circ(Id - \tau)$ is the commutator bracket of $\mu$.
\end{definition}

The following result says that the map $g$ is almost alternating in a Hom-alternative algebra and is the Hom-type analogue of \cite{bk} (Lemma 2.3).  Since $g$ is constructed using $\alpha$, $\mu$, $[-,-]$, $as$, and $f$ (which is defined using $\alpha$, $\mu$, and $as$), the following result is ultimately about identities in a Hom-alternative algebra.

\begin{proposition}
\label{prop:g}
In a Hom-alternative algebra $(A,\mu,\alpha)$, the map $g$ \eqref{g} is alternating in $\{u,v,w\}$ and also in $\{x,y\}$.  That is, $g$ changes sign when two of $\{u,v,w\}$ (or $x$ and $y$) are interchanged.
\end{proposition}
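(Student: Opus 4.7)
The plan is to verify the two alternation properties separately.

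For alternation in $\{x,y\}$, inspection of \eqref{g} suffices. The three summands involving $f$ change sign under $x \leftrightarrow y$ because $f$ is alternating by Proposition \ref{prop:f}. The remaining two summands contain $as(u,x,y)$ and $as(v,x,y)$; since the Hom-associator is alternating in a Hom-alternative algebra, each of these also changes sign under $x \leftrightarrow y$. Hence $g$ is anti-symmetric in $(x,y)$.

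For alternation in $\{u,v,w\}$, it is enough to show anti-symmetry under the two adjacent transpositions $u \leftrightarrow v$ and $v \leftrightarrow w$, as these generate the symmetric group on three letters. In each case I would form $g + g \circ \sigma$ and expand each summand using the definition \eqref{f} of $f$ together with Hom-multiplicativity. In the $u \leftrightarrow v$ case the combination $f(uv,\alpha(w),\alpha(x),\alpha(y)) + f(vu,\alpha(w),\alpha(x),\alpha(y))$ produces the Hom-associator $as((uv+vu)\alpha(w),\alpha^2(x),\alpha^2(y))$ together with several anticommutator-type terms, which I would match against the contributions coming from $\alpha^3(u)f(v,w,x,y) + \alpha^3(v)f(u,w,x,y)$, from $f(u,w,x,y)\alpha^3(v) + f(v,w,x,y)\alpha^3(u)$, and from the last two summands of $g$. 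The key identities I would employ are the cocycle condition \eqref{ass3cocycle} of Lemma \ref{lem1:homalt}, the identities \eqref{homalt1}--\eqref{homalt3} of Corollary \ref{cor1:homalt}, and the alternativity of $f$ itself. For the $v \leftrightarrow w$ case the analogous plan applies; the identification $f = as \circ ([-,-]\otimes\alpha^{\otimes 2}) \circ (Id + \zeta)$ from Proposition \ref{prop:f2} is especially handy here for rewriting $f(uv,\alpha(w),\ldots) + f(uw,\alpha(v),\ldots)$ in a form where the commutators $[v,w]$ and $[u,w]$ appear explicitly, matching the last two summands of $g$.

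The main obstacle will be bookkeeping rather than any conceptually new identity: after full expansion each adjacent-transposition sum becomes a lengthy alternating sum of Hom-associators, and one must locate the right instances of the cocycle condition and Corollary \ref{cor1:homalt} to apply in order to see the cancellation. The scheme follows Bruck and Kleinfeld's original proof of Lemma 2.3 in \cite{bk}, with the twisting map $\alpha$ inserted in the slots dictated by Hom-multiplicativity, so no genuinely new Hom-type phenomenon is expected.
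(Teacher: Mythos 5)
Your proposal is correct and follows essentially the same route as the paper: the $\{x,y\}$ case is immediate from the alternativity of $f$ and $as$, and the $\{u,v,w\}$ case is reduced to two generating transpositions and settled by expanding $f$ via its definition \eqref{f}, Proposition \ref{prop:f2}, and Corollary \ref{cor1:homalt}. The one place the paper is more economical is the choice of transpositions: it checks $u \leftrightarrow w$ and $v \leftrightarrow w$ (equivalently, that $g$ vanishes when $w=u$ or $w=v$), which kills two of the five summands of $g$ outright --- one because $f$ is alternating, one because $[a,a]=0$ --- and reduces each check to the clean three-term identities \eqref{galt1} and \eqref{galt2}; your $u \leftrightarrow v$ check leaves four summands alive and routes you through the linearized square $uv+vu$, so the bookkeeping you anticipate would be noticeably heavier on that branch.
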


\begin{proof}
The map $g$ is alternating in $\{x,y\}$ because $f$ and $as$ are both alternating (the former by Proposition \ref{prop:f}).

To show that $g$ is alternating in $\{u,v,w\}$, first note that it is enough to show that $g$ is alternating in $\{u,w\}$ and in $\{v,w\}$.  The map $g$ is alternating in $\{u,w\}$ if and only if
\begin{equation}
\label{guvu}
g(u,v,u,x,y) = 0.
\end{equation}
Since $f$ is alternating and $[u,u] = 0$, \eqref{guvu} is equivalent to
\begin{equation}
\label{galt1}
f(uv,\alpha(u),\alpha(x),\alpha(y)) = \alpha^3(u)f(v,u,x,y) + \alpha\left(as(u,x,y)\alpha[v,u]\right).
\end{equation}
Likewise, $g$ is alternating in $\{v,w\}$ if and only if
\[
g(u,v,v,x,y) = 0,
\]
which is equivalent to
\begin{equation}
\label{galt2}
f(uv,\alpha(v),\alpha(x),\alpha(y)) = f(u,v,x,y)\alpha^3(v) + \alpha\left((\alpha[u,v])as(v,x,y)\right).
\end{equation}
It remains to prove \eqref{galt1} and \eqref{galt2}, which we do in the following two Lemmas.
\end{proof}

\begin{lemma}
\label{lem:galt1}
In a Hom-alternative algebra $(A,\mu,\alpha)$, \eqref{galt1} holds.
\end{lemma}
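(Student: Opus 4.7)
The plan is to prove \eqref{galt1} by a direct expansion of both sides using the definition \eqref{f} of the Hom-Bruck-Kleinfeld function $f$, followed by reductions via the identities from Corollary \ref{cor1:homalt}, the alternativity of $as$ and $f$, Hom-flexibility (which gives $(uv)\alpha(u)=\alpha(u)(vu)$), and Proposition \ref{prop:f2}, which expresses $f$ via commutators and Hom-associators.

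First I would apply \eqref{f} together with the multiplicativity relation $as\circ\alpha^{\otimes 3}=\alpha\circ as$ to rewrite the left-hand side of \eqref{galt1} as
\[
as((uv)\alpha(u),\alpha^2(x),\alpha^2(y)) - \alpha(as(u,x,y))\,\alpha^2(uv) - \alpha^3(u)\,as(uv,\alpha(x),\alpha(y)),
\]
and similarly expand $f(v,u,x,y)$ on the right-hand side into three terms. The hard summand is $as((uv)\alpha(u),\alpha^2(x),\alpha^2(y))$, which I would handle by invoking Proposition \ref{prop:f2}: the identity $f(w,x,y,z)=as([w,x],\alpha(y),\alpha(z))+as([y,z],\alpha(w),\alpha(x))$ lets me replace the entire expression $f(uv,\alpha(u),\alpha(x),\alpha(y))$ by $as([uv,\alpha(u)],\alpha^2(x),\alpha^2(y))+\alpha(as([x,y],uv,\alpha(u)))$, and Hom-flexibility collapses $[uv,\alpha(u)]$ to $\alpha(u)[v,u]$. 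Applying the same characterization to $f(v,u,x,y)$ on the right produces $as([v,u],\alpha(x),\alpha(y))$ and $as([x,y],\alpha(v),\alpha(u))$, each prefixed by $\alpha^3(u)$.

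After these substitutions, matching the two sides reduces to two subsidiary identities: one relating $as(\alpha(u)[v,u],\alpha^2(x),\alpha^2(y))$ with $\alpha^3(u)\,as([v,u],\alpha(x),\alpha(y))$ plus a commutator residue proportional to $\alpha(as(u,x,y))\,\alpha^2[v,u]$, and a parallel one matching the $as([x,y],\cdot,\cdot)$ components. Both follow by treating $[v,u]$ (respectively $[x,y]$) as a single element, applying \eqref{f} once more, and invoking \eqref{homalt2} and \eqref{homalt3} to dispose of Hom-associators whose first entry has the form $\alpha(\text{factor})\cdot\text{factor}$. The main obstacle I anticipate is the combinatorial bookkeeping: the Hom-setting generates a large number of terms distinguished only by the placement of the twisting map $\alpha$ and the order of factors in non-associative products, and the commutator $\alpha[v,u]$ appearing on the right of \eqref{galt1} is calibrated precisely to absorb the non-associative defect that prevents regrouping products such as $\alpha^3(u)\cdot(as(u,x,y)\alpha^2(v))$. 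Persistent and disciplined use of alternativity (of both $as$ and $f$, the latter from Proposition \ref{prop:f}) together with multiplicativity will be essential in tracking the cancellations.
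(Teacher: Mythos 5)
Your toolkit is the right one---Proposition \ref{prop:f2}, Hom-flexibility to factor the commutator, re-expansion via \eqref{f}, and the identities \eqref{homalt2}--\eqref{homalt3}---and this is essentially the paper's method. But the expansion you describe is aimed at the wrong one of the two mirror identities. Starting from $f(uv,\alpha(u),\alpha(x),\alpha(y))$ and writing $[uv,\alpha(u)]=\alpha(u)[v,u]$ puts the commutator on the \emph{right} factor of the product, so when you re-expand $as(\alpha(u)[v,u],\alpha^2(x),\alpha^2(y))$ by \eqref{f} you get
\[
f(\alpha(u),[v,u],\alpha(x),\alpha(y)) + as([v,u],\alpha(x),\alpha(y))\,\alpha^3(u) + \alpha^2([v,u])\,\alpha(as(u,x,y)),
\]
with $\alpha^3(u)$ on the right of the associator and $\alpha^2[v,u]$ on the left of $as(u,x,y)$; likewise the second summand reduces via \eqref{homalt2} to $as([x,y],\alpha(v),\alpha(u))\,\alpha^3(u)$, again with $\alpha^3(u)$ on the right. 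Since the multiplication is not commutative, these cannot be matched against the terms $\alpha^3(u)f(v,u,x,y)$ and $\alpha(as(u,x,y)\alpha[v,u])$ of \eqref{galt1}, whose factors sit on the opposite sides; your two ``subsidiary identities'' are false as stated. You have also dropped the term $f(\alpha(u),[v,u],\alpha(x),\alpha(y))=f(uv,\alpha(u),\ldots)-f(vu,\alpha(u),\ldots)$, which is not a negligible residue: it is exactly the self-cancellation that makes the argument close. If you push your computation through, the $f(uv,\ldots)$ terms cancel from the two sides and what you actually prove is
\[
f(vu,\alpha(u),\alpha(x),\alpha(y)) = f(v,u,x,y)\,\alpha^3(u) + \alpha\bigl((\alpha[v,u])\,as(u,x,y)\bigr),
\]
which is \eqref{galt2} with $u$ and $v$ interchanged, i.e.\ Lemma \ref{lem:galt2}, not \eqref{galt1}.

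The fix, which is what the paper does, is to start from the \emph{other} product: expand $f(vu,\alpha(u),\alpha(x),\alpha(y))$ by Proposition \ref{prop:f2}, use flexibility in the form $[vu,\alpha(u)]=[v,u]\alpha(u)$ (commutator on the left), re-expand by \eqref{f} with $[v,u]$ in the first slot and $\alpha(u)$ in the second, and reduce the second summand by \eqref{homalt3} rather than \eqref{homalt2}. Then $\alpha^3(u)$ and $\alpha^2[v,u]$ land on the correct sides, the $f(vu,\ldots)$ term cancels against the left-hand side, and what survives is precisely \eqref{galt1} for $f(uv,\alpha(u),\alpha(x),\alpha(y))$. (A minor point: your opening step of expanding the left-hand side by \eqref{f} is redundant, since you immediately replace the whole of $f(uv,\alpha(u),\alpha(x),\alpha(y))$ using Proposition \ref{prop:f2}.)
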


\begin{proof}
To prove \eqref{galt1}, we start with
\begin{equation}
\label{fvu}
f(vu,\alpha(u),\alpha(x),\alpha(y))
= as([vu,\alpha(u)],\alpha^2(x),\alpha^2(y)) + as([\alpha(x),\alpha(y)],\alpha(vu),\alpha^2(u)),
\end{equation}
which follows from Proposition \ref{prop:f2}.  We have
\[
\begin{split}
[vu,\alpha(u)] &= (vu)\alpha(u) - \alpha(u)(vu)\\
&= (vu)\alpha(u) - (uv)\alpha(u)\\
&= [v,u]\alpha(u).
\end{split}
\]
Therefore, using the definition \eqref{f} of $f$, the first summand on the right-hand side of \eqref{fvu} is:
\begin{equation}
\label{fvu1}
\begin{split}
as([vu,\alpha(u)],\alpha^2(x),\alpha^2(y))
&= as([v,u]\alpha(u),\alpha^2(x),\alpha^2(y))\\
&=f([v,u],\alpha(u),\alpha(x),\alpha(y)) + as(\alpha(u),\alpha(x),\alpha(y))\alpha^2([v,u])\\
&\relphantom{} + \alpha^2(\alpha(u))as([v,u],\alpha(x),\alpha(y))\\
&= f(vu,\alpha(u),\alpha(x),\alpha(y)) - f(uv,\alpha(u),\alpha(x),\alpha(y)) \\
&\relphantom{} + \alpha(as(u,x,y)\alpha[v,u]) + \alpha^3(u)as([v,u],\alpha(x),\alpha(y)).
\end{split}
\end{equation}
In the last equality above, we used the multiplicativity of $\alpha$.  On the other hand, the second summand on the right-hand side of \eqref{fvu} is:
\begin{equation}
\label{fvu2}
\begin{split}
as([\alpha(x),\alpha(y)],\alpha(vu),\alpha^2(u))
&= as(\alpha[x,y],\alpha(v)\alpha(u),\alpha^2(u))\quad\text{(by multiplicativity of $\alpha$)}\\
&= - as(\alpha^2(u),\alpha(v)\alpha(u),\alpha[x,y])\quad\text{(by alternativity of $as$)}\\
&= - \alpha^2(\alpha(u))as(\alpha(u),\alpha(v),[x,y])\quad\text{(by \eqref{homalt3})}\\
&= \alpha^3(u)as([x,y],\alpha(v),\alpha(u))\quad\text{(by alternativity of $as$)}.
\end{split}
\end{equation}
Using \eqref{fvu1} and \eqref{fvu2} in \eqref{fvu}, we obtain:
\[
\begin{split}
f(uv,\alpha(u),\alpha(x),\alpha(y))
&= \alpha(as(u,x,y)\alpha[v,u])\\
&\relphantom{} + \alpha^3(u)\{as([v,u],\alpha(x),\alpha(y)) + as([x,y],\alpha(v),\alpha(u))\}\\
&= \alpha(as(u,x,y)\alpha[v,u]) + \alpha^3(u)f(v,u,x,y),
\end{split}
\]
where the last equality follows from Proposition \ref{prop:f2} again.  This finishes the proof.
\end{proof}

\begin{lemma}
\label{lem:galt2}
In a Hom-alternative algebra $(A,\mu,\alpha)$, \eqref{galt2} holds.
\end{lemma}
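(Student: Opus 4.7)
The plan is to mirror the proof of Lemma \ref{lem:galt1}, with two crucial modifications needed to get the right multiplication ordering on the right-hand side of \eqref{galt2}. Note that in \eqref{galt1} the factor $\alpha^3(u)$ sits on the left of $f(v,u,x,y)$ and $as(u,x,y)$ sits on the left of $\alpha[v,u]$, whereas in \eqref{galt2} the factor $\alpha^3(v)$ should sit on the \emph{right} of $f(u,v,x,y)$ and $\alpha[u,v]$ should sit on the \emph{left} of $as(v,x,y)$. The derivation must therefore be arranged to produce these opposite orderings.

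First I would start from $f(vu,\alpha(v),\alpha(x),\alpha(y))$ (rather than $f(uv,\alpha(v),\alpha(x),\alpha(y))$) and expand it via Proposition \ref{prop:f2} as
\[
as([vu,\alpha(v)],\alpha^2(x),\alpha^2(y)) + as([\alpha(x),\alpha(y)],\alpha(vu),\alpha^2(v)).
\]
The key new identity for the inner commutator is $[vu,\alpha(v)] = \alpha(v)[u,v]$, obtained from the Hom-flexibility identity $as(v,u,v)=0$: this gives $(vu)\alpha(v) = \alpha(v)(uv)$, whence $[vu,\alpha(v)] = \alpha(v)(uv)-\alpha(v)(vu) = \alpha(v)[u,v]$ (compare $[vu,\alpha(u)] = [v,u]\alpha(u)$ used in Lemma \ref{lem:galt1}). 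Expanding $as(\alpha(v)[u,v],\alpha^2(x),\alpha^2(y))$ via the definition \eqref{f} of $f$ then places $\alpha^3(v)$ on the right of $as([u,v],\alpha(x),\alpha(y))$ and $\alpha^2([u,v])$ on the left of $\alpha(as(v,x,y))$, which by multiplicativity is exactly $\alpha(\alpha[u,v]\cdot as(v,x,y))$ in the desired order.

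For the second summand, I would use alternativity of $as$ to rewrite $as([\alpha(x),\alpha(y)],\alpha(vu),\alpha^2(v)) = -as(\alpha^2(v),\alpha(v)\alpha(u),\alpha[x,y])$ and then apply identity \eqref{homalt2} of Corollary \ref{cor1:homalt} (in contrast to \eqref{homalt3} used in Lemma \ref{lem:galt1}), which yields $as([x,y],\alpha(u),\alpha(v))\alpha^3(v)$, again with $\alpha^3(v)$ on the right. Combining the two summands, the expression $as([u,v],\alpha(x),\alpha(y))\alpha^3(v) + as([x,y],\alpha(u),\alpha(v))\alpha^3(v)$ collapses via Proposition \ref{prop:f2} into $f(u,v,x,y)\alpha^3(v)$. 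Finally, alternativity of the Hom-Bruck-Kleinfeld function (Proposition \ref{prop:f}) gives $f(\alpha(v),[u,v],\alpha(x),\alpha(y)) = -f(uv,\alpha(v),\alpha(x),\alpha(y)) + f(vu,\alpha(v),\alpha(x),\alpha(y))$, and the latter term cancels with the left-hand side of the original Proposition \ref{prop:f2} expansion, isolating $f(uv,\alpha(v),\alpha(x),\alpha(y))$ as claimed in \eqref{galt2}.

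The main obstacle is not any single calculation but the bookkeeping required to choose the correct starting point and the correct Hom-alternative identity so that every factor lands on the side needed to match the right-hand side of \eqref{galt2}. Once those choices are made, the algebra parallels Lemma \ref{lem:galt1} step for step.
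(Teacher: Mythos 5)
Your proposal is correct and follows essentially the same route as the paper's proof: start from $f(vu,\alpha(v),\alpha(x),\alpha(y))$ expanded by Proposition \ref{prop:f2}, use $[vu,\alpha(v)]=\alpha(v)[u,v]$ (from $as(v,u,v)=0$), expand via the definition \eqref{f} of $f$, treat the second summand with alternativity and \eqref{homalt2}, cancel the $f(vu,\alpha(v),\alpha(x),\alpha(y))$ terms, and recombine via Proposition \ref{prop:f2}. All the orderings you flag (the $\alpha^3(v)$ on the right, $\alpha[u,v]$ on the left) come out exactly as in the paper.
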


\begin{proof}
To prove \eqref{galt2}, we start with
\begin{equation}
\label{fv}
f(vu,\alpha(v),\alpha(x),\alpha(y)) = as([vu,\alpha(v)],\alpha^2(x),\alpha^2(y)) + as([\alpha(x),\alpha(y)],\alpha(vu),\alpha^2(v)),
\end{equation}
which follows from Proposition \ref{prop:f2}.  We have
\[
\begin{split}
[vu,\alpha(v)] &= (vu)\alpha(v) - \alpha(v)(vu)\\
&= \alpha(v)(uv) - \alpha(v)(vu)\\
&= \alpha(v)[u,v].
\end{split}
\]
Therefore, using the definition \eqref{f} of $f$, the first summand on the right-hand side of \eqref{fv} is:
\begin{equation}
\label{fv1}
\begin{split}
as([vu,\alpha(v)],\alpha^2(x),\alpha^2(y))
&= as(\alpha(v)[u,v],\alpha^2(x),\alpha^2(y))\\
&= f(\alpha(v),[u,v],\alpha(x),\alpha(y)) + as([u,v],\alpha(x),\alpha(y))\alpha^3(v)\\
&\relphantom{} + \alpha^2([u,v])as(\alpha(v),\alpha(x),\alpha(y))\\
&= f(vu,\alpha(v),\alpha(x),\alpha(y)) - f(uv,\alpha(v),\alpha(x),\alpha(y))\\
&\relphantom{} + as([u,v],\alpha(x),\alpha(y))\alpha^3(v) + \alpha\left((\alpha[u,v])as(v,x,y)\right).
\end{split}
\end{equation}
In the last equality above, we used the alternativity of $f$ (Proposition \ref{prop:f}) and the multiplicativity of $\alpha$.  On the other hand, the second summand on the right-hand side of \eqref{fv} is:
\begin{equation}
\label{fv2}
\begin{split}
as([\alpha(x),\alpha(y)],\alpha(vu),\alpha^2(v))
&= as(\alpha[x,y],\alpha(v)\alpha(u),\alpha^2(v))\quad\text{(by multiplicativity of $\alpha$)}\\
&= -as(\alpha^2(v),\alpha(v)\alpha(u),\alpha[x,y])\quad\text{(by alternativity of $as$)}\\
&= -as(\alpha(v),\alpha(u),[x,y])\alpha^3(v)\quad\text{(by \eqref{homalt2})}\\
&= as([x,y],\alpha(u),\alpha(v))\alpha^3(v)\quad\text{(by alternativity of $as$)}.
\end{split}
\end{equation}
Using \eqref{fv1} and \eqref{fv2} in \eqref{fv}, we obtain:
\[
\begin{split}
f(uv,\alpha(v),\alpha(x),\alpha(y))
&= \alpha\left((\alpha[u,v])as(v,x,y)\right)\\
&\relphantom{} + \{as([u,v],\alpha(x),\alpha(y)) + as([x,y],\alpha(u),\alpha(v))\}\alpha^3(v)\\
&= \alpha\left((\alpha[u,v])as(v,x,y)\right) + f(u,v,x,y)\alpha^3(v),
\end{split}
\]
where the last equality follows from Proposition \ref{prop:f2} again.  This finishes the proof.
\end{proof}


In any alternative algebra, the following \textbf{Moufang identities} \cite{moufang} hold:
\begin{equation}
\label{moufang}
\begin{split}
(xyx)z &= x(y(xz)),\\
((zx)y)x &= z(xyx),\\
(xy)(zx) &= x(yz)x.
\end{split}
\end{equation}
Here $xyx = (xy)x = x(yx)$ is unambiguous in an alternative algebra.  Now we prove analogues of the Moufang identities in a Hom-alternative algebra.  The proof below is the Hom version of that of \cite{bk} (Lemma 2.2).

\begin{theorem}[\textbf{Hom-Moufang identities}]
\label{thm:moufang}
Let $(A,\mu,\alpha)$ be a Hom-alternative algebra.  Then the following Hom-Moufang identities hold for all $x,y,z \in A$:
\begin{subequations}
\label{hommoufang}
\begin{align}
((xy)\alpha(x))\alpha^2(z) &= \alpha^2(x)(\alpha(y)(xz)),\label{hm1}\\
((zx)\alpha(y))\alpha^2(x) &= \alpha^2(z)(\alpha(x)(yx)),\label{hm2}\\
\alpha((xy)(zx)) &= \alpha^2(x)((yz)\alpha(x)).\label{hm3}
\end{align}
\end{subequations}
\end{theorem}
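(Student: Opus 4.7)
The plan is to reduce each of \eqref{hm1}--\eqref{hm3} to Corollary \ref{cor1:homalt} via the basic expansion $(ab)\alpha(c) = as(a,b,c) + \alpha(a)(bc)$, coupled with alternativity of $as$ and multiplicativity of $\alpha$. In each case the strategy is the same: recognize that both sides of the identity differ from a common four-factor product by a single Hom-associator, and then use \eqref{homalt2} or \eqref{homalt3} to verify that those two associators agree.

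For \eqref{hm1} I would write
\[
((xy)\alpha(x))\alpha^2(z) = as(xy,\alpha(x),\alpha(z)) + (\alpha(x)\alpha(y))(\alpha(x)\alpha(z))
\]
and
\[
\alpha^2(x)(\alpha(y)(xz)) = (\alpha(x)\alpha(y))(\alpha(x)\alpha(z)) - as(\alpha(x),\alpha(y),xz).
\]
By alternativity, $as(xy,\alpha(x),\alpha(z)) = -as(\alpha(x),xy,\alpha(z))$, and by \eqref{homalt2} both $as(\alpha(x),xy,\alpha(z))$ and $as(\alpha(x),\alpha(y),xz)$ equal $as(x,y,z)\alpha^2(x)$, so the two residual associator terms cancel. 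For \eqref{hm2} the same template applies: one gets
\[
((zx)\alpha(y))\alpha^2(x) - \alpha^2(z)(\alpha(x)(yx)) = as(zx,\alpha(y),\alpha(x)) + as(\alpha(z),\alpha(x),yx),
\]
and then cyclically permuting each associator (cyclic 3-cycles are even and therefore sign-preserving) brings both terms into the exact form covered by \eqref{homalt3}. Explicitly, $as(zx,\alpha(y),\alpha(x)) = as(\alpha(x),zx,\alpha(y)) = \alpha^2(x)as(x,z,y) = -\alpha^2(x)as(x,y,z)$, and $as(\alpha(z),\alpha(x),yx) = as(yx,\alpha(z),\alpha(x)) = \alpha^2(x)as(x,y,z)$, so their sum vanishes.

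The third identity \eqref{hm3} is slightly different because it is genuinely a four-factor product, but it submits to the same philosophy. First use multiplicativity to rewrite $\alpha((xy)(zx)) = (\alpha(x)\alpha(y))(\alpha(z)\alpha(x))$, then apply the basic expansion with $a=\alpha(x)$, $b=\alpha(y)$, $c=zx$ (so that $\alpha(c) = \alpha(z)\alpha(x)$) to obtain
\[
\alpha((xy)(zx)) = as(\alpha(x),\alpha(y),zx) + \alpha^2(x)(\alpha(y)(zx)).
\]
By \eqref{homalt3} the first summand equals $\alpha^2(x)\,as(x,y,z)$. For the second summand, unfolding $as(y,z,x) = (yz)\alpha(x) - \alpha(y)(zx)$ and using $as(y,z,x) = as(x,y,z)$ (the cyclic permutation is even), one rewrites $\alpha(y)(zx) = (yz)\alpha(x) - as(x,y,z)$. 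Multiplying by $\alpha^2(x)$ on the left and adding the two pieces, the $\alpha^2(x)\,as(x,y,z)$ terms cancel, leaving exactly $\alpha^2(x)((yz)\alpha(x))$.

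The main obstacle, as usual for Moufang-type calculations, is purely bookkeeping: each of \eqref{homalt2} and \eqref{homalt3} comes in two halves and is written in one particular placement of $\alpha$'s, so one must correctly permute the arguments of the various Hom-associators (keeping track of signs from odd permutations versus invariance under even ones) so that the residual terms line up. No identity beyond alternativity of $as$, multiplicativity of $\alpha$, and Corollary \ref{cor1:homalt} is needed.
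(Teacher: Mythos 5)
Your proposal is correct and follows essentially the same route as the paper's proof: expand each side via the definition of the Hom-associator together with multiplicativity of $\alpha$, and then cancel the residual associator terms using alternativity of $as$ and the identities \eqref{homalt2} and \eqref{homalt3} of Corollary \ref{cor1:homalt}. The sign and permutation bookkeeping in each of the three cases checks out.
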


\begin{proof}
For \eqref{hm1} we compute as follows:
\[
\begin{split}
((xy)\alpha(x))\alpha^2(z) &= as(xy,\alpha(x),\alpha(z)) + \alpha(xy)(\alpha(x)\alpha(z)) \quad\text{(by \eqref{homassociator})}\\
&= as(xy,\alpha(x),\alpha(z)) + (\alpha(x)\alpha(y))\alpha(xz) \quad\text{(by multiplicativity of $\alpha$)}\\
&= as(xy,\alpha(x),\alpha(z)) + as(\alpha(x),\alpha(y),xz) + \alpha^2(x)(\alpha(y)(xz))\quad\text{(by \eqref{homassociator})}\\
&= -as(\alpha(x),\alpha(y),xz) + as(\alpha(x),\alpha(y),xz) + \alpha^2(x)(\alpha(y)(xz))\quad\text{(by \eqref{homalt2})}\\
&= \alpha^2(x)(\alpha(y)(xz)).
\end{split}
\]
For \eqref{hm2} we compute as follows:
\[
\begin{split}
((zx)\alpha(y))\alpha^2(x) &= as(zx,\alpha(y),\alpha(x)) + \alpha(zx)(\alpha(y)\alpha(x))\quad\text{(by \eqref{homassociator})}\\
&= as(zx,\alpha(y),\alpha(x)) + (\alpha(z)\alpha(x))\alpha(yx)\quad\text{(by multiplicativity of $\alpha$)}\\
&= as(zx,\alpha(y),\alpha(x)) + as(\alpha(z),\alpha(x),yx) + \alpha^2(z)(\alpha(x)(yx))\quad\text{(by \eqref{homassociator})}\\
&= -as(\alpha(z),\alpha(x),yx)) + as(\alpha(z),\alpha(x),yx) + \alpha^2(z)(\alpha(x)(yx))\quad\text{(by \eqref{homalt3})}\\
&= \alpha^2(z)(\alpha(x)(yx)).
\end{split}
\]
For \eqref{hm3} we compute as follows:
\[
\begin{split}
\alpha((xy)(zx)) &= (\alpha(x)\alpha(y))\alpha(zx)\quad\text{(by multiplicativity of $\alpha$)}\\
&= as(\alpha(x),\alpha(y),zx) + \alpha^2(x)(\alpha(y)(zx))\quad\text{(by \eqref{homassociator})}\\
&= \alpha^2(x)as(x,y,z) + \alpha^2(x)(\alpha(y)(zx))\quad\text{(by \eqref{homalt3})}\\
&= \alpha^2(x)\left\{as(y,z,x) + \alpha(y)(zx)\right\}\quad\text{(by alternativity of $as$)}\\
&= \alpha^2(x)((yz)\alpha(x)).
\end{split}
\]
This completes the proof.
\end{proof}



\begin{thebibliography}{AA}
\bibitem{albert}
A.A. Albert, A structure theory for Jordan algebras, Ann. Math. 48 (1947) 546-567.

\bibitem{ama}
F. Ammar and A. Makhlouf, Hom-Lie superalgebras and Hom-Lie admissible superalgebras, arXiv:0906.1668v1.

\bibitem{ams}
H. Ataguema, A. Makhlouf, and S. Silvestrov, Generalization of $n$-ary Nambu algebras and beyond, J. Math. Phys. 50, no. 8 (2009), 083501.

\bibitem{baez}
J.C. Baez, The octonions, Bull. Amer. Math. Soc. 39 (2002) 145-205.

\bibitem{bk}
R.H. Bruck and E. Kleinfeld, The structure of alternative division rings, Proc. Amer. Math. Soc. 2 (1951) 878-890.

\bibitem{cartan}
\'{E}. Cartan, Les groupes r\'{e}els simples finis et continus, Ann. \'{E}cole Norm. 31 (1914) 263-355.

\bibitem{fg2}
Y. Fr\'{e}gier and A. Gohr, On unitality conditions for hom-associative algebras, arXiv:0904.4874v1.

\bibitem{gohr}
A. Gohr, On hom-algebras with surjective twisting, arXiv:0906.3270v2.

\bibitem{gt}
F. G\"{u}rsey and C.-H. Tze, On the role of division, Jordan and related algebras in particle physics, World Scientific, Singapore, 1996.

\bibitem{hls}
J.T. Hartwig, D. Larsson, and S.D. Silvestrov, Deformations of Lie algebras using $\sigma$-derivations, J. Algebra 295 (2006) 314-361.

\bibitem{jacobson}
N. Jacobson, Structure and representations of Jordan algebras, Amer. Math. Soc., Providence, RI, 1968.

\bibitem{jvw}
P. Jordan, J. von Neumann, and E. Wigner, On an algebraic generalization of the quantum mechanical formalism, Ann. Math. 35 (1934) 29-64.

\bibitem{kerdman}
F.S. Kerdman, Analytic Moufang loops in the large, Alg. Logic 18 (1980) 325-347.

\bibitem{kuzmin}
E.N. Kuz'min, The connection between Mal'cev algebras and analytic Moufang loops, Alg. Logic 10 (1971) 1-14.

\bibitem{mak}
A. Makhlouf, Hom-alternative algebras and Hom-Jordan algebras, arXiv:0909.0326.

\bibitem{mak2}
A. Makhlouf, Paradigm of nonassociative Hom-algebras and Hom-superalgebras, arXiv:1001.4240v1.

\bibitem{ms}
A. Makhlouf and S. Silvestrov, Hom-algebra structures, J. Gen. Lie Theory Appl. 2 (2008) 51-64.

\bibitem{ms4}
A. Makhlouf and S. Silvestrov, Hom-algebras and Hom-coalgebras, to appear in J. Algebra Appl., arXiv:0811.0400v2.

\bibitem{maltsev}
A.I. Mal'tsev, Analytic loops, Mat. Sb. 36 (1955) 569-576.

\bibitem{moufang}
R. Moufang, Zur struktur von alternativk\"{o}rpern, Math. Ann. 110 (1935) 416-430.

\bibitem{myung}
H.C. Myung, Malcev-admissible algebras, Progress in Math. 64, Birkh\"{a}user, Boston, MA, 1986.

\bibitem{nagy}
P.T. Nagy, Moufang loops and Malcev algebras, Sem. Sophus Lie 3 (1993) 65-68.

\bibitem{okubo}
S. Okubo, Introduction to octonion and other non-associative algebras in physics, Cambridge Univ. Press, Cambridge, UK, 1995.

\bibitem{ps}
J.M. P\'{e}rez-Izquierdo and I.P. Shestakov, An envelope for Malcev algebras, J. Algebra 272 (2004) 379-393.

\bibitem{sabinin}
L.V. Sabinin, Smooth quasigroups and loops, Kluwer Academic, The Netherlands, 1999.

\bibitem{sagle}
A.A. Sagle, Malcev algebras, Trans. Amer. Math. Soc. 101 (1961) 426-458.

\bibitem{schafer}
R.D. Schafer, An introduction to nonassociative algebras, Dover, New York, 1996.

\bibitem{sv}
T.A. Springer and F.D. Veldkamp, Octonions, Jordan algebras, and exceptional groups, Springer, Berlin, 2000.

\bibitem{tw}
J. Tits and R.M. Weiss, Moufang polygons, Springer-Verlag, Berlin, 2002.

\bibitem{yau}
D. Yau, Enveloping algebras of Hom-Lie algebras, J. Gen. Lie Theory Appl. 2 (2008) 95-108.

\bibitem{yau2}
D. Yau, Hom-algebras and homology, J. Lie Theory 19 (2009) 409-421.

\bibitem{yau3}
D. Yau, Hom-bialgebras and comodule algebras, arXiv:0810.4866.

\bibitem{yau4}
D. Yau, Hom-Novikov algebras, arXiv:0909.0726.

\bibitem{yau5}
D. Yau, The Hom-Yang-Baxter equation, Hom-Lie algebras, and quasi-triangular bialgebras, J. Phys. A 42 (2009) 165202 (12pp).

\bibitem{yau6}
D. Yau, The Hom-Yang-Baxter equation and Hom-Lie algebras, arXiv:0905.1887.

\bibitem{yau8}
D. Yau, The classical Hom-Yang-Baxter equation and Hom-Lie bialgebras, arXiv:0905.1890.

\bibitem{yau8.5}
D. Yau, Infinitesimal Hom-bialgebras and Hom-Lie bialgebras, arXiv:1001.5000.

\bibitem{yau9}
D. Yau, Hom-quantum groups I: quasi-triangular Hom-bialgebras, 	 arXiv:0906.4128.

\bibitem{yau10}
D. Yau, Hom-quantum groups II: cobraided Hom-bialgebras and Hom-quantum geometry, arXiv:0907.1880.

\bibitem{yau11}
D. Yau, Hom-quantum groups III: representations and module Hom-algebras, arXiv:0911.5402.

\end{thebibliography}
\end{document}